\newtheorem{theorem}{Theorem}[section]
\newtheorem{definition}{Definition}[section]
\newtheorem{lemma}[theorem]{Lemma}
\newtheorem{Observation}{Observation}[section]
\newtheorem{Remark}{Remark}[section]
\newtheorem{discu}{Discussion:}
{\tiny }
\numberwithin{equation}{section}
\newtheorem{conje}{Conjecture:}
\begin{document}
\title{Connectedness of independence attractors of graphs with independence number three}
\author[1]{Moumita Manna \footnote{s23ma09002@iitbbs.ac.in}}
\author[2]{Tarakanta Nayak \footnote{Corresponding author, tnayak@iitbbs.ac.in}}
\affil[1,2]{Department of Mathematics, Indian Institute of Technology Bhubaneswar, Bhubaneswar, India}
\date{}
\maketitle
\begin{abstract}
An independent set in a simple graph $G$ is a set of pairwise non-adjacent vertices in $G$. The independence polynomial of $G$, denoted by $I_G$ is defined as $1 + a_1 z + a_2 z^2+\cdots+a_d z^{d}$, where $a_i$ denotes the number of independent sets  with cardinality $i$ and $d$ is the cardinality of a largest independent set in $G$. This $d$ is  known as the independence number of $G$.  Let $G^m$ denote the $m$-times lexicographic product of $G$ with itself. The independence attractor  of $G$, denoted by   $\mathcal{A}(G)$  is defined as  $\mathcal{A}(G) =  \lim\limits_{m\rightarrow \infty} \{z: I_{G^m}(z)=0\}$, where the limit is taken with respect to the Hausdorff metric defined on the space of all compact subsets of the plane. 
This paper investigates the connectedness  of the independence attractors of all graphs with independence number three. Let  the independence polynomial of $G$ be $1+a_1 z +a_2 z^2 +a_3 z^3$.   For $a_1 =3$, $\mathcal{A}(G)$ turns out to be $ \{-1\} \cup \{z: |z+1|=1\} $. For $a_1 >3$, we prove the following.
 If $a_2 ^2 \leq 3 a_1 a_3$, or $3 a_1 a_3 < a_2 ^2 < 4a_3 (a_1 -1)$ then $\mathcal{A}(G)$ is totally disconnected. For $a_2 ^2 =4a_3 (a_1 -1) $,  $\mathcal{A}(G)$ is connected when $a_1 =5$ and is disconnected but not totally disconnected for all other values of $a_1$. If $a_2 ^2 > 4a_3 (a_1 -1)$ then $\mathcal{A}(G)$ can be connected, totally disconnected or disconnected but not totally disconnected depending on further conditions involving $a_1, a_2$ and $a_3$.  Examples of graphs exhibiting all the possibilities are provided.

\end{abstract}
\noindent {\bf Key words:} Graphs, Independence polynomials, Independence attractors, Julia sets
\\
\noindent {\bf AMS Subject Classification:} 37F20; 37F10; 05C69; 05C31 
 \section{Introduction}
 
Let  $G$ be a simple graph. An independent set in $G$ is a set of pairwise non-adjacent vertices in $G$.  By an $i$-independent set, we mean an independent set with exactly $i$ elements. The independence polynomial of $G$, denoted by $I_G$ is defined as $1+a_1 z +a_2 z^2+ \cdots + a_d z^d$ where $a_i$ denotes the number of $i$-independent sets  and $d$ is the cardinality of a largest independent set.  This $d$ is  known as the independence number of $G$.  Note that $a_1$ is the number of vertices of $G$.
Independence polynomials appear as the partition function of the hard-core model  in statistical physics (\cite{scot-sokal2005,leeyang1952}). Information on their roots, also called independence roots is recently shown to be crucial in designing efficient algorithms to compute the values of the polynomials (\cite{ivona-2020}). Regions in the plane that do not contain any independence root for a family of graphs are related to phase transition. Such zero-free regions for graphs with a given maximum vertex degree   have been studied in (\cite{hans-2019}). 
\par  Since all the coefficients of an independence polynomial are positive, it cannot have any positive root. On the other hand, families of graphs are found whose independence roots are dense in $\mathbb{C}$ (\cite{brownetal2004}). This article deals with the limit set of independence roots of a sequence of graphs arising out of lexicographic product.  
 
The  lexicographic product of a simple graph $G$ having vertex set $V(G)$ with itself is the graph with the vertex set $V(G)\times V(G)$ and such that a vertex $(a,x)$ is adjacent to a vertex $(b,y)$ if and only if $a$ is adjacent to $b$ in $G$, or $a=b$ and $x$ is adjacent to $y$ in $G$.  This amounts to replacing each vertex of $G$ with a copy of $G$ and joining the vertices belonging to two such different copies whenever the underlying vertices are adjacent. For a natural number $m$, let $G^m$ denote the $m$-times lexicographic product of   $G$ with itself. 
For a polynomial $f$, let  $\text{Roots}(f)=\{z: f(z)=0\}$.
The set $ \text{Roots} (I_{G^m})$ is finite and therefore a compact subset of the plane for each $m$. The limit  of $ \text{Roots} (I_{G^m})$ as $m \to \infty$ with respect to the Hausdorff metric is known to exist  (see Section \ref{prelim}), and is called the \textit{independence attractor}  $\mathcal{A}(G)$ of $G$(see ~\cite{moumita-2025}).
\begin{definition}[Independence attractor]
	$\mathcal{A}(G) =\lim\limits_{m \to \infty} \text{Roots} (I_{G^m}). $ 
\end{definition}
For $\epsilon>0$ and a compact subset $K$ of the plane, let $[K]_\epsilon$ be the union of all the disks with center at some point of $K$ and radius $\epsilon$.  Then the Hausdorff distance between two compact subsets $K_1,K_2$ of the plane is the minimum of the set $\{\epsilon: K_2 \subseteq [K_1]_\epsilon ~\mbox{and}~K_1 \subseteq [K_2]_\epsilon\}$.
Therefore, the independence attractor of a graph gives the location of the independence roots of $G^m$  up to arbitrary closeness for all sufficiently large $m$. Consequently, zero-free regions for the graphs in  this sequence (except finitely many) are obtained.  The independence polynomial of a complete graph is linear  and Hickman observed that the independence attractor is $\{0\}$ (see page  3, \cite{moumita-2025}).  In the same paper, the author  describes the connectedness of independence attractors of graphs with independence number two. Graphs  whose  independence attractors are topologically simple (i.e., lines and circles) are characterized (see \cite{line}, \cite{moumita-2025}). We take up graphs with independence number three and determine the connectedness of their independence attractors.
\par 

In order to state the results obtained in this article, let $G$ be a graph with independence number three and its independence polynomial be $$ I_G (z)=1+a_1 z+a_2 z^2 +a_3 z^3.$$ As there is at least one $3-$independent set (i.e., an independent set with three elements), we have $a_1 \geq 3$. There are three cases depending on the nature of critical points of $I_G$, i.e., zeros of $I_G '$, which are given by  
 $$c_1 =\frac{-a_2 - \sqrt{a_2 ^2 -3 a_1 a_3}}{3a_3}~\mbox{and} ~ c_2 =\frac{-a_2 + \sqrt{a_2 ^2 -3 a_1 a_3}}{3a_3}.$$   
\begin{enumerate}
	\item \textbf{Bicritically non-real} ($a_2 ^2 < 3a_1 a_3$): There are two distinct non-real critical points and both have the same real part.
		\item   \textbf{Unicritical} ($a_2 ^2=3a_1 a_3$): There is a single critical point, namely $\frac{-a_2}{3a_3}$.
	
	\item \textbf{Bicritically real} ($a_2 ^2> 3a_1 a_3$): There are two distinct real critical points and both are negative.

\end{enumerate} 
For the sake of brevity, we refer to a graph with independence number three as \textit{bicritically non-real}, \textit{unicritical} or \textit{bicritically real} if the coefficients of its independence polynomial $1+a_1z +a_2z^2+ a_3z^3$ satisfy $a_2 ^2 < 3a_1 a_3,  a_2 ^2 = 3a_1 a_3$ or $a_2 ^2 > 3a_1 a_3$ respectively. A subset $K$ of the plane is called totally disconnected if each maximally connected subset of $K$ is a singleton.  
\begin{theorem}[Bicritically non-real]
 The independence attractor of every  bicritically non-real graph is totally disconnected. 
	\label{bicritically non-real}
\end{theorem}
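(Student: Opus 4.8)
The plan is to turn the statement into a question about a cubic Julia set and then read off disconnectedness from the dynamics on the real line. First I would record the reduction: writing $P(z):=I_G(z)-1=a_3z^3+a_2z^2+a_1z$ and using the product formula for independence polynomials of lexicographic products, $I_{H[K]}(z)=I_H(I_K(z)-1)$, one obtains by induction $I_{G^m}=P^{\circ m}+1$, so $\text{Roots}(I_{G^m})=P^{-m}(-1)$. Since $\deg P=3\ge 2$ and the finite point $-1$ is non-exceptional, $P^{-m}(-1)$ converges in the Hausdorff metric to the Julia set $J(P)$; hence $\mathcal{A}(G)=J(P)$, and the task becomes: if $a_2^2<3a_1a_3$ then $J(P)$ is totally disconnected. (This reduction is the content of the preliminaries and of \cite{moumita-2025}.)

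Next I would exploit the symmetry of $P$ to reduce to showing $J(P)$ is merely disconnected. Because $P$ has positive real coefficients, $\overline{P^n(z)}=P^n(\bar z)$; its two critical points $c_1,c_2$ (the non-real conjugate numbers displayed in the excerpt) therefore have complex-conjugate orbits, so $c_1$ lies in the basin of infinity exactly when $c_2$ does. Recalling that $J(P)$ is connected iff every critical point lies in the filled Julia set $K(P)$, and that (Branner--Hubbard) a cubic polynomial both of whose critical points escape to $\infty$ has totally disconnected Julia set, only two possibilities remain: $J(P)$ connected, or $J(P)$ totally disconnected. So it suffices to exclude the former.

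The core of the argument is the behaviour of $P$ on $\mathbb{R}$. In the bicritically non-real case $P'(x)=3a_3x^2+2a_2x+a_1$ has negative discriminant, so $P'>0$ on $\mathbb{R}$ and $P|_{\mathbb{R}}$ is an increasing homeomorphism. Its real fixed points are the roots of $x(a_3x^2+a_2x+a_1-1)=0$, and the quadratic factor has discriminant $a_2^2-4a_3(a_1-1)<0$, since $a_2^2<3a_1a_3\le 4a_3(a_1-1)$ --- the last inequality using $a_1\ge 4$, which holds because the only graph of independence number three with $a_1=3$ is the empty graph $\overline{K_3}$, which is unicritical. Hence $0$ is the unique real fixed point, repelling because $P'(0)=a_1>1$; an increasing homeomorphism of $\mathbb{R}$ with a single, repelling fixed point carries every other real point off to $\pm\infty$, so $K(P)\cap\mathbb{R}=\{0\}$. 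Now assume, for contradiction, that $J(P)$ is connected, so $K(P)$ is connected and full. By backward invariance the two roots $r,\bar r$ of $a_3z^2+a_2z+a_1$ (non-real, since $a_2^2<4a_1a_3$) lie in $K(P)$, as they map to $0\in K(P)$; one is strictly above and the other strictly below $\mathbb{R}$. Since $K(P)\cap\mathbb{R}=\{0\}$, deleting $0$ disconnects $K(P)$, so $0$ is a cut point of $J(P)$. But $0$ is a repelling fixed point whose multiplier $a_1$ is a real number greater than $1$, and such a fixed point is non-dividing --- exactly one external ray lands at it (Goldberg--Milnor; the theory of external rays for polynomials with connected Julia set). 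This contradiction forces $J(P)$ to be disconnected, whence by the previous paragraph $\mathcal{A}(G)=J(P)$ is totally disconnected.

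I expect the delicate point to be this last contradiction: converting the "bow-tie" shape of $K(P)$ at $0$ into an impossibility relies on the standard but non-elementary fact that a repelling fixed point with positive real multiplier cannot be dividing, which rests on external-ray machinery, and on the cubic dichotomy for the case "both critical points escape" (Branner--Hubbard). A more computational alternative --- which I would pursue if one wants to avoid that machinery --- is to prove directly that the critical points escape: one has the one-step bound $\left|\,P(c_1)/c_1\,\right|^2=\frac{a_1a_3\,(4a_1a_3-a_2^2)}{9a_3^2}>1$, immediate from $a_2^2<3a_1a_3$ and $a_1\ge 3$, but promoting this to a genuine escape estimate, i.e.\ excluding that the conjugate critical orbits remain bounded, is where the real work would lie.
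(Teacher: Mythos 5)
Your reduction to $\mathcal{A}(G)=\mathcal{J}(P)$ and the dichotomy ``connected or totally disconnected'' via the conjugate critical orbits coincide with the paper's setup, and your real-line analysis is correct: $P'>0$ on $\mathbb{R}$, $0$ is the only real fixed point and is repelling, $K(P)\cap\mathbb{R}=\{0\}$, and if $K(P)$ were connected then the non-real preimages of $0$ would force $0$ to be a cut point. The proof breaks at the final step: it is \emph{not} a theorem that a repelling fixed point with positive real multiplier is non-dividing (i.e.\ has exactly one external ray landing). What the Goldberg--Milnor machinery gives is that the rays landing there have rotation number $0$ and hence are fixed rays; a cubic has two fixed rays (angles $0$ and $1/2$), and both may land at the same repelling fixed point. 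A concrete counterexample satisfying every structural hypothesis your contradiction uses is $P(z)=z^3+\lambda z$ with $\lambda=1+\epsilon$, $\epsilon>0$ small: the coefficients are real, $P'>0$ on $\mathbb{R}$, $0$ is the unique real fixed point, repelling with multiplier $\lambda>1$, and $P^{-1}(0)\setminus\{0\}=\{\pm i\sqrt{\lambda}\}$ is non-real; yet the non-real fixed points $\pm i\sqrt{\epsilon}$ have multiplier $1-2\epsilon$, so they are attracting, each immediate basin captures one of the conjugate critical points $\pm i\sqrt{\lambda/3}$, and therefore $K(P)$ is connected with $0$ a dividing repelling fixed point at which both fixed rays land. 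So the ``bow-tie'' configuration you try to rule out genuinely occurs for cubics of exactly this qualitative type; excluding it for reduced independence polynomials must use the sizes of $a_1,a_2,a_3$, which your argument never invokes beyond sign and discriminant information. Hence the contradiction step is unfounded and the proposal, as written, does not prove the theorem.

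This is precisely where the paper works quantitatively: assuming $\mathcal{J}(P)$ connected, Lemma~\ref{filled-Julia-estimate} (Pommerenke's estimate) confines $K(P)$ to the critical disk $\{z:|z+\frac{a_2}{3a_3}|\le \frac{2}{\sqrt{a_3}}\}$, and one then shows the critical orbit leaves that disk --- for $a_1\ge 7$ via the inequalities of Lemmas~\ref{estimate-1} and~\ref{biggerequaltoseven} applied to $P(c)$, and for $a_1=4,5,6$ by listing the finitely many admissible $(a_2,a_3)$ (Lemma~\ref{basic-independentpoly}) and checking that $P^2(c)$ exits the disk --- after which Lemma~\ref{totally-disconnected} gives total disconnectedness. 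Your fallback one-step bound $|P(c)/c|^2=\frac{a_1(4a_1a_3-a_2^2)}{9a_3}>1$ is arithmetically correct, but, as you yourself note, one step of expansion is not an escape argument (nothing prevents the orbit from returning, as the counterexample above illustrates, where the critical orbits converge to attracting fixed points), so that alternative route is also left incomplete.
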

\begin{theorem}[Unicritical] The independence attractor  of a unicritical graph is  the union of $\{-1\}$ and the circle $\{z: |z+1|=1\}$ if it has three vertices, and is  totally disconnected otherwise. 
	\label{unicritical}
\end{theorem}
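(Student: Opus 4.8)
The plan is to reduce the statement to the complex dynamics of the polynomial $\psi(z):=I_G(z)-1=a_3z^3+a_2z^2+a_1z$. Using the lexicographic product identity $I_{H[K]}=I_H\circ(I_K-1)$, induction on $m$ gives $I_{G^m}=\psi^{\circ m}+1$, so that $\mathrm{Roots}(I_{G^m})=\psi^{-m}(\{-1\})$, and by the description of $\mathcal{A}(G)$ recalled in Section~\ref{prelim} the attractor is governed by the Julia set $J(\psi)$. The unicritical hypothesis $a_2^2=3a_1a_3$ says exactly that $\psi'(z)=3a_3z^2+2a_2z+a_1$ is a perfect square, namely $\psi'(z)=3a_3(z-c)^2$ with $c=-\tfrac{a_2}{3a_3}<0$; hence $c$ is the only finite critical point of $\psi$, and it has multiplicity two. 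The computation I would do first is the location of the critical value: feeding $a_2^2=3a_1a_3$ into $\psi(c)=c\,(a_3c^2+a_2c+a_1)$ collapses the bracket to $a_1/3$, so
$$\psi(c)=\frac{a_1}{3}\,c .$$
Since $a_1\ge 3$, the coefficient $a_1/3$ is $1$ precisely when $a_1=3$ and is $>1$ otherwise, and the proof branches on this.

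In the three-vertex case $a_1=3$, and since the only graph on three vertices with independence number three is $\overline{K_3}$, we have $I_G(z)=(1+z)^3$ and $\psi(z)=(z+1)^3-1$. Then $\psi(c)=c=-1$, so $c=-1$ is a superattracting fixed point, $\psi(z)+1=(z+1)^3$, and the translation $w=z+1$ conjugates $\psi$ to $w\mapsto w^3$; therefore $J(\psi)$ is the round circle $\{\,|z+1|=1\,\}$, while $\psi^{-m}(\{-1\})=\{-1\}$ for every $m$. Combining this with the Section~\ref{prelim} description of $\mathcal{A}$ would give $\mathcal{A}(\overline{K_3})=\{-1\}\cup\{\,|z+1|=1\,\}$.

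In the remaining case $a_1\ge 4$ I must show that $J(\psi)$ is totally disconnected. For a cubic polynomial with a single (double) finite critical point $c$, $J(\psi)$ is connected if the forward orbit of $c$ is bounded and is a Cantor set if that orbit escapes to $\infty$, with nothing in between; so it is enough to prove $\psi^{\,n}(c)\to\infty$. I would work on the negative real axis: writing $z=-t$ with $t>0$ gives $\psi(z)=-t\,(a_3t^2-a_2t+a_1)$, and the quadratic $q(t):=a_3t^2-a_2t+(a_1-1)$ has discriminant $a_2^2-4a_3(a_1-1)$, which under $a_2^2=3a_1a_3$ simplifies to $a_3(4-a_1)$. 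For $a_1\ge 5$ this is negative, so $q>0$, i.e.\ $|\psi(z)|>|z|$ for all $z<0$; since $\psi$ maps $(-\infty,0)$ into itself and has no fixed point there (a negative fixed point would force $a_3t^2-a_2t+a_1=1$, contradicting $q>0$), the orbit of $c<0$ decreases strictly and runs off to $-\infty$. Hence $J(\psi)$ is a Cantor set, and $\mathcal{A}(G)$ — which coincides with $J(\psi)$, up to the countable backward orbit of $-1$, which accumulates only on $J(\psi)$ — is totally disconnected. The borderline value $a_1=4$ does not occur: there $a_2^2=12a_3$ with $a_2\le\binom{4}{2}$ and $a_3\le\binom{4}{3}$ forces $(a_2,a_3)=(6,3)$, i.e.\ $G$ edgeless on four vertices, whence $a_3=\binom{4}{3}=4\ne3$, a contradiction.

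The step I expect to be the main obstacle is the escape of the critical orbit: the identity $\psi(c)=\tfrac{a_1}{3}c$ only yields $|\psi(c)|>|c|$, and this inequality must be propagated along the entire orbit. The clean device is the forward-invariant, uniformly escaping half-line $(-\infty,0)$, and whether it truly is escaping hinges precisely on the sign of the discriminant $a_3(4-a_1)$; establishing the invariance and the strict expansion of moduli on it, and excluding a negative fixed point, is where the positivity of all coefficients and the relation $a_2^2=3a_1a_3$ have to be used together with some care. A secondary point is to invoke the Section~\ref{prelim} description of $\mathcal{A}(G)$ precisely enough to pin down which points of the backward orbit of $-1$ survive in the limit — only the isolated point $-1$ in the three-vertex case, and nothing beyond $J(\psi)$ in the totally disconnected case.
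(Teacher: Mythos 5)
Your proposal is correct and takes essentially the same route as the paper: for $a_1=3$ you conjugate $P_G$ to $z\mapsto z^3$ and add the isolated point $-1$ via the preliminary description of $\mathcal{A}(G)$, and for $a_1\ge 5$ you use $a_2^2-4a_3(a_1-1)=a_3(4-a_1)<0$ to exclude negative fixed points, so the (real, negative) critical orbit escapes and $\mathcal{A}(G)=\mathcal{J}(P_G)$ is totally disconnected. The only cosmetic differences are that you prove the escape/total-disconnectedness step inline rather than citing Lemma~\ref{Juliaset-prop}(3), and you rule out $a_1=4$ by direct arithmetic instead of the enumeration in Lemma~\ref{basic-independentpoly}(1).
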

There is only one graph with independence number three that has three vertices. This is unicritical (see Lemma~\ref{basic-obs}). Further, it is observed from the proof of Theorem~\ref{unicritical} that a unicritical graph cannot have four vertices.
\par 
A point $z_0$ is called a fixed point of a polynomial $f$ if $f(z_0)=z_0$. 
For bicritically real graphs, there are three cases depending on the nature of the fixed points of  $I_G(z)-1$. The point $0$ is always a fixed point. The non-zero  fixed points of $I_G (z)-1$ are given by $$\frac{-a_2\pm \sqrt{a_2 ^2 -4a_3 (a_1 -1)}}{2 a_3}.$$

It can be seen that there are two non-real fixed points, a single fixed point  or two real fixed points of $I_G(z)-1$ when $a_2 ^2 < 4 a_3 (a_1 -1)$, $a_2 ^2 = 4 a_3 (a_1 -1)$ or  $a_2 ^2 > 4 a_3 (a_1 -1)$ respectively. The next theorem deals with the first two cases.	
\begin{theorem}[Bicritically real - two non-real fixed points or one fixed point] Let  $G$ be a bicritically real  graph and $I_G(z)=1+a_1 z+a_2 z^2 +a_3 z^3$.  
	\begin{enumerate}
		\item If $a_2 ^2 <4a_3 (a_1 -1) $ then  $\mathcal{A}(G)$ is totally disconnected.
			\item If $a_2 ^2 =4a_3 (a_1 -1) $ then $\mathcal{A}(G)$ is connected for $a_1 =5$, and disconnected but not totally disconnected otherwise.
		\end{enumerate}  
	\label{bicritically real-two non-real fixed points or one fixed point}
\end{theorem}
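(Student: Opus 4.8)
The plan is to reduce everything to the iteration of the cubic $f(z):=I_G(z)-1=a_1z+a_2z^2+a_3z^3$. As recalled in Section~\ref{prelim}, for $a_1>3$ the map $f$ has no finite exceptional point, so the $f$-preimages of the zeros of $I_G$ accumulate exactly on the Julia set $J(f)$; hence $\mathcal{A}(G)=J(f)$ and the statement becomes a question about $f$. I would use three standard facts: (a) the filled Julia set $K(f)$ is connected iff it contains all critical points of $f$, and $K(f)$ is connected iff $J(f)$ is; (b) if all critical points of $f$ escape to $\infty$ then $J(f)$ is totally disconnected; (c) if some but not all critical points escape while $f$ has a bounded Fatou component, then $J(f)$ is disconnected but not totally disconnected. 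Since $G$ is bicritically real, its critical points $c_1<c_2$ are real and negative, and the constraints $3a_1a_3<a_2^2\le 4a_3(a_1-1)$ force $a_1\ge 5$; the whole analysis happens on $(-\infty,0)$.

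For part (1), write $f(z)-z=z\,g(z)$ with $g(z)=a_3z^2+a_2z+(a_1-1)$. When $a_2^2<4a_3(a_1-1)$ the discriminant of $g$ is negative, so $g>0$ on $\mathbb{R}$ and thus $f(z)<z$ for all $z<0$. Hence $(-\infty,0)$ is forward invariant and any negative orbit is strictly decreasing; having no negative fixed point to converge to, it tends to $-\infty$. In particular $c_1,c_2$ both escape, and by (b) $\mathcal{A}(G)=J(f)$ is totally disconnected.

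For part (2), $a_2^2=4a_3(a_1-1)$ gives $g(z)=a_3(z-\beta)^2$ with $\beta=-a_2/(2a_3)<0$, so $f(z)-z=a_3z(z-\beta)^2$ and $\beta$ is a parabolic fixed point of multiplier $1$ (one attracting petal). Using $a_1=a_3\beta^2+1$, $a_2=-2a_3\beta$ one computes $c_1=\tfrac{\beta}{3}\bigl(2+\sqrt{\tfrac{a_1-4}{a_1-1}}\bigr)$ and $c_2=\tfrac{\beta}{3}\bigl(2-\sqrt{\tfrac{a_1-4}{a_1-1}}\bigr)$ and verifies $\beta<c_1<c_2<0$. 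As $f$ is increasing on $(-\infty,c_1]$ with $f(\beta)=\beta$ and $f(c_1)-c_1=a_3c_1(c_1-\beta)^2<0$, the interval $[\beta,c_1]$ is forward invariant with $f(z)<z$ on $(\beta,c_1]$; so the orbit of $c_1$ decreases to $\beta$, and $c_1$ always lies in the immediate parabolic basin. For $c_2$ one notes $f(c_2)<c_2$ and splits on the sign of $f(c_2)-\beta$: if $f(c_2)\ge\beta$ then $f(c_2)$ lies in $[\beta,c_1]$ or in $(c_1,c_2)$, and in the latter case one more application of $f$ (decreasing on $[c_1,c_2]$) puts the orbit into $(\beta,c_1)$, so in all cases the orbit of $c_2$ converges to $\beta$ and by (a) $\mathcal{A}(G)$ is connected; if $f(c_2)<\beta$ then $f$ maps $(-\infty,\beta)$ strictly decreasingly into itself, so the orbit of $c_2$ tends to $-\infty$, and by (c) (the parabolic basin being a bounded Fatou component) $\mathcal{A}(G)$ is disconnected but not totally disconnected.

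It remains to pin down the sign of $f(c_2)-\beta$ as a function of $a_1$. Using $f'(c_2)=0$ to eliminate $c_2^2$ gives $f(c_2)=\tfrac13c_2(2a_1+a_2c_2)$, and substituting the values of $c_2$, $a_1$, $a_2$ yields, after simplification,
\[
f(c_2)-\beta=\frac{\beta}{27}\left[(2a_1-11)+2(a_1-4)\sqrt{\tfrac{a_1-4}{a_1-1}}\,\right].
\]
Since $\beta<0$, $f(c_2)\ge\beta$ is equivalent to the bracket being $\le 0$; the bracket equals $0$ at $a_1=5$ and is strictly positive for every integer $a_1\ge 6$ (both summands are then positive). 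Therefore $\mathcal{A}(G)$ is connected exactly when $a_1=5$ and is disconnected but not totally disconnected for every other admissible $a_1$, as claimed. The main obstacle I anticipate is this closed-form evaluation together with the verification $\beta<c_1<c_2<0$; after that the real-line dynamics is routine. (As a check, when $a_1=5$ one has $c_2=\beta/2$, which is the double $f$-preimage of $\beta$, so $f(c_2)=\beta$ there.)
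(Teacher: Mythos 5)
Your dynamical argument is essentially the paper's own proof: for (1) you re-derive (rather than cite) the fact used here, namely Lemma~\ref{Juliaset-prop}(3), that a reduced independence polynomial with real critical points and no negative fixed point has all critical orbits escaping, hence a totally disconnected Julia set; for (2) you identify the parabolic fixed point $\beta=-a_2/(2a_3)$, show $\beta<c_1<c_2<0$, prove the orbit of $c_1$ (and of $c_2$ when $P(c_2)\geq\beta$) stays in $[\beta,0]$ while $c_2$ escapes when $P(c_2)<\beta$, and your closed form $P(c_2)-\beta=\frac{\beta}{27}\bigl[(2a_1-11)+2(a_1-4)\sqrt{\tfrac{a_1-4}{a_1-1}}\bigr]$ is exactly the paper's Equation~\ref{alpha-gamma}, with the same sign analysis (zero at $a_1=5$, negative for $a_1\geq 6$) and the same ``not totally disconnected'' argument via the bounded parabolic domain plus the basin of $\infty$. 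All of this checks out.

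The one step that does not stand as written is the reduction $\mathcal{A}(G)=\mathcal{J}(P_G)$. You justify it by saying that $P_G$ has no finite exceptional point, so the preimages of the roots of $I_G$ ``accumulate exactly'' on $\mathcal{J}(P_G)$. Non-exceptionality only gives $\mathcal{J}(P_G)\subseteq\lim_m P_G^{-m}(-1)$; it does not rule out extra accumulation in the Fatou set, which genuinely occurs when $-1$ is a periodic point lying in the Fatou set (this is precisely the exceptional alternative in Lemma~\ref{JS-attractor}(2), where $\mathcal{A}(G)$ strictly contains $\mathcal{J}(P_G)$). So the implication you invoke is false in general. It is, however, easily repaired in the present regime: $I_G(-1)=0=I_G'(-1)$ forces $a_2=2a_1-3$ and $a_3=a_1-2$, whence $a_2^2=(2a_1-3)^2>4(a_1-2)(a_1-1)=4a_3(a_1-1)$, contradicting both hypotheses of the theorem; hence Lemma~\ref{JS-attractor} (equivalently Theorem~\ref{IA-JS} with Remark~\ref{IA-JS-rem}(1)) gives $\mathcal{A}(G)=\mathcal{J}(P_G)$, which is exactly how the paper argues. (Note that in case (2) the borderline family $a_3=a_1-1$, $a_2=2(a_1-1)$ does have $I_G(-1)=0$, with $-1=\beta$ the parabolic fixed point, so one really needs the derivative condition of Lemma~\ref{JS-attractor}(1) rather than an exceptional-point argument.) With that citation substituted, your proof is complete and matches the paper's.
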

For $a_2 ^2 > 4a_3 (a_1 -1)$, $I_G (z)-1$ has two non-real zeros, a single zero or  two real zeros   if $ a_2 ^2 <4a_1 a_3$, $ a_2 ^2 =4a_1 a_3$ or $ a_2 ^2 >4a_1 a_3$ respectively. Of course, $0$ is always a zero of $I_G (z)-1$. 
\begin{theorem}[Bicritically real - two real fixed points]
	 Let  $G$ be a bicritically real  graph and $I_G(z)=1+a_1 z+a_2 z^2 +a_3 z^3$  such that $a_2 ^2 > 4a_3 (a_1 -1)$.
	 \begin{enumerate}
	 
	 	\item  Let $ a_2 ^2  < 4a_1 a_3$.  
	 	\begin{enumerate}
	 		\item If $4a_3 (a_1 -1) < a_2 ^2  < \frac{a_3 (2a_1 -3)^2}{a_1 -2}$ then $\mathcal{A}(G)$  is disconnected but not totally disconnected.
	 	\item If $a_2^2=\frac{a_3(2a_1-3)^2}{a_1-2}$  then $\mathcal{A}(G)$ is disconnected but not totally disconnected.
	 	\item If $\frac{a_3(2a_1-3)^2}{a_1-2}<a_2^2<\frac{4a_3(a_1-2)^2}{a_1-3}$ then   $\mathcal{A}(G)$ is disconnected but not totally disconnected.
	 	\item If $a_2^2=\frac{4a_3(a_1-2)^2}{a_1-3}$  then for $a_1 \leq 7$,  $\mathcal{A}(G)$ is connected, and for $a_1 >7$, it is disconnected but not totally disconnected.
	 	\item  If $\frac{4a_3(a_1-2)^2}{a_1-3}<a_2^2<4 a_1 a_3$ then  $\mathcal{A}(G)$ is disconnected except for two possibilities, namely  $(a_1, a_2, a_3)=(7,9,3)$ or $(8, 11,4)$.
	 	
	 	\end{enumerate} 
	 		\item If $ a_2 ^2 =4a_1 a_3$ then $\mathcal{A}(G)$  is connected for $a_1 \leq 9$ and totally disconnected otherwise.
	 	\item  If $ a_2 ^2  > 4a_1 a_3$ then $\mathcal{A}(G)$ is disconnected.
	 \end{enumerate} 
	 \label{bicritically real-two real fixed points}
\end{theorem}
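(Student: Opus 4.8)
The plan is to move the whole question into complex dynamics and then read the topology of $\mathcal{A}(G)$ off the orbits of two critical points. Recall (Section~\ref{prelim}) that $\text{Roots}(I_{G^m})=f^{-m}(-1)$ for $f:=I_G-1=a_3z^3+a_2z^2+a_1z$, and that, $-1$ not being an exceptional point of the cubic $f$, the preimages $f^{-m}(-1)$ converge in the Hausdorff metric to the Julia set; hence $\mathcal{A}(G)=J(f)$. (Conjugating by $z\mapsto z/\sqrt{a_3}$ makes the leading coefficient $1$ and leaves every threshold in the statement invariant, each being homogeneous of the right weight; we simply keep the coefficients $a_1,a_2,a_3$.) Thus the theorem is a statement about the Julia set of a real cubic with positive coefficients, whose two critical points are real, negative and distinct, $c_1<c_2<0$, with $c_1$ a local maximum and $c_2$ a local minimum.

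The topology of $J(f)$ is controlled by the forward orbits of $c_1,c_2$ via the following standard facts about cubics. First, $J(f)$ is connected iff both $c_1,c_2$ lie in the filled Julia set $K(f)$, i.e.\ have bounded orbit. Second, if both escape to $\infty$ then $J(f)$ is a Cantor set, hence totally disconnected. Third, if exactly one escapes then $J(f)$ is disconnected, and it is \emph{not} totally disconnected precisely when the surviving critical point lies in a non-degenerate component of $K(f)$; in the situations occurring here that happens iff the surviving critical point is attracted to an attracting or parabolic cycle, whereas if it is merely preperiodic to a repelling cycle (so that, the other critical point having escaped, $f$ carries no recurrent critical point) the Branner--Hubbard theorem forces its $K(f)$-component to degenerate to a point and $J(f)$ is totally disconnected. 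This last phenomenon is what makes item~2 assert ``totally disconnected'' for $a_1>9$.

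One then studies $f$ on the real line. Here $0$ is a repelling fixed point ($f'(0)=a_1>1$), the other fixed points are $\beta_-<\beta_+<0$, and $[\beta_-,0]$ is the natural trapping interval: in case~1 ($a_2^2<4a_1a_3$) one checks that $f(z)<0$ for all $z<0$, that $\beta_-<c_1<c_2<0$, and that $[\beta_-,0]$ is forward invariant iff $f(c_2)\ge\beta_-$, a real point having bounded orbit iff its forward orbit never falls below $\beta_-$ (points below $\beta_-$, and in cases~2--3 also the real zeros of $f$ and the points mapped into $(0,\infty)$, escape monotonically). The thresholds in the statement are the relevant bifurcation and orbit-coincidence loci: the lower boundary $a_2^2=4a_3(a_1-1)$ of the present regime is where $\beta_-=\beta_+$ is parabolic of multiplier $+1$ (treated in Theorem~\ref{bicritically real-two non-real fixed points}); $a_2^2=\frac{4a_3(a_1-2)^2}{a_1-3}$ is where $\beta_+$ period-doubles, $f'(\beta_+)=-1$, obtained by substituting the solution $\beta_+=-2(a_1-2)/a_2$ of $f'(\beta_+)=-1$ into the fixed-point relation $a_3\beta_+^2+a_2\beta_++(a_1-1)=0$; $a_2^2=4a_1a_3$ is where $c_1=-a_2/(2a_3)$ is the double zero of $f$, so $f(c_1)=0$ and $c_1$ is preperiodic to $0$; and the intermediate threshold $a_2^2=\frac{a_3(2a_1-3)^2}{a_1-2}$ is a further orbit-coincidence locus found by the same elimination, across which the topological type does not change (so it only separates regions needing different arguments). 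Now one reads off the fate of $c_1,c_2$ region by region: for $a_2^2<\frac{4a_3(a_1-2)^2}{a_1-3}$ (items 1(a)--(c)), $\beta_+$ is attracting, absorbs one critical point, and the other escapes (``disconnected but not totally disconnected''); at the period-doubling locus 1(d), $\beta_+$ is parabolic and absorbs one critical point while the other is bounded iff $a_1\le 7$ (connected) and escapes otherwise; for $\frac{4a_3(a_1-2)^2}{a_1-3}<a_2^2<4a_1a_3$ (item 1(e)) an attracting $2$-cycle has been born and generically absorbs only one critical point while the other escapes (``disconnected''); item~2 ($f(c_1)=0$) and item~3 are handled directly, the latter using the critical-value identity $f(c)=\frac{1}{3}c(a_2c+2a_1)$ to see that $a_2^2>4a_1a_3$ forces $f(c_1)>0$, so $c_1$ escapes to $+\infty$.

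Finally, the numerical cutoffs come out of the arithmetic of these substitutions. Along $a_2^2=4a_1a_3$ the inequality $f(c_2)\ge\beta_-$, which decides whether $c_2$ is bounded, reduces after clearing radicals to $4u^3-27u-27\ge 0$ with $u=\sqrt{a_1}$; since $4u^3-27u-27=(u-3)(2u+3)^2$ this is exactly $a_1\le 9$. A parallel computation along the period-doubling locus produces the cutoff $a_1=7$, and the two exceptional triples $(7,9,3)$ and $(8,11,4)$ of item~1(e) are exhibited by computing the first few iterates of $c_1$ and $c_2$ and checking that both land in the immediate basin of the attracting $2$-cycle, so $\mathcal{A}(G)$ is connected there. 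I expect the crux to be item~3 of the trichotomy: distinguishing ``totally disconnected'' from ``disconnected but not totally disconnected'' cannot be done by merely counting escaping critical points --- one must certify, at each boundary locus, whether the surviving critical point genuinely lies in an attracting or parabolic basin (a non-degenerate $K(f)$-component) or is only preperiodic to a repelling fixed point with no recurrent critical point remaining, and nail down the precise arithmetic producing the sporadic exceptions.
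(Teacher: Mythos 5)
Your reduction to the dynamics of $P=I_G-1$ is the right framework, but two essential ingredients of the argument are missing, and one of them makes your route give a wrong answer in part of case 1(b). First, the blanket identification $\mathcal{A}(G)=\mathcal{J}(P)$ is not valid: when $-1$ is a multiple root of $I_G$ (equivalently $a_2=2a_1-3$, $a_3=a_1-2$), $-1$ is a \emph{super-attracting} fixed point of $P$, the sets $P^{-m}(-1)$ accumulate on points of the Fatou set, and by Lemma~\ref{JS-attractor}/Theorem~\ref{IA-JS} the attractor is the disjoint union of $\mathcal{J}(P)$ with $\bigcup_k \mbox{Roots}(I_{G^k})$. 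This happens exactly inside case 1(b): for $(a_1,a_2,a_3)=(4,5,2),(5,7,3),(6,9,4)$ both critical points have bounded orbit ($\delta_2=c_1$ is super-attracting and $P(c_2)\geq\delta_1$), so $\mathcal{J}(P)$ is \emph{connected} (Lemma~\ref{fivecases-dynamics}(2)), and the disconnectedness of $\mathcal{A}(G)$ asserted in 1(b) comes entirely from the extra root set. Your description of 1(b) (``one critical point absorbed, the other escapes'') is false for $a_1\leq 6$, and with $\mathcal{A}(G)=\mathcal{J}(P)$ you would conclude ``connected'' there, contradicting the theorem.

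Second, in cases 1(a), 1(c) and 1(e) you assert that the second critical point escapes, but give no argument, and this is precisely where the paper's work lies: escape of $c_2$ is \emph{not} automatic in these parameter ranges (it fails in the adjacent loci 1(b), 1(d) and for the exceptional triples of 1(e)), and it must be certified triple by triple. The paper does this by combining Buff's bound (Lemma~\ref{Buff-2003}: $a_1=|P'(0)|\leq 9$ is necessary for connectedness, with $a_1=9$ eliminated via the line-segment classification, giving Lemma~\ref{bicriticalreal-tworealfixedpoint-twononrealzero-lemma}(2)) with a graph-theoretic enumeration: Tur\'an-type bounds and case analysis on the underlying graphs (Lemma~\ref{basic-independentpoly}) show that for $4\leq a_1\leq 8$ only $23$ integer triples $(a_1,a_2,a_3)$ can occur (Lemma~\ref{bicriticalreal-twofixedpoints-twononrealzero}), and each is checked explicitly against the criterion $P(c_2)\geq\delta_1$ of Lemma~\ref{bicriticalreal-tworealfixedpoint-twononrealzero-lemma}(1); this is also the only way the two exceptions $(7,9,3)$ and $(8,11,4)$ of 1(e) are found. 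Your proposal has no mechanism replacing this finiteness-and-checking step, and your appeal to an ``attracting $2$-cycle absorbing one critical point'' in 1(e) is unsupported and in general false (for $(9,13,5)$ both critical points escape and the Julia set is totally disconnected). Minor points: the ``not totally disconnected'' conclusions do need the classification of $\delta_2$ as (super-)attracting or parabolic, which the paper isolates in Lemma~\ref{five-cases} and which you gesture at correctly; your cutoff computation in item 2 is the paper's, up to a sign slip (the bounded case is $4u^3-27u-27\leq 0$, $u=\sqrt{a_1}$); and for item 2 with $a_1>9$ the paper avoids Branner--Hubbard by an elementary argument showing $\mathcal{J}(P)\subset[\delta_1,0]$ contains no interval, though your route via non-recurrence of the preperiodic critical point is a legitimate alternative.
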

In Theorems~\ref{bicritically real-two real fixed points}(1)(e) and \ref{bicritically real-two real fixed points}(3), the disconnected $\mathcal{A}(G)$ can actually be totally disconnected. Such examples are given in  Remarks~\ref{totallydisconnected-example-9-13-5} and ~\ref{totallydisconnected-example-7-6-1}.

The Julia 	set of a polynomial $f$ with degree at least two, denoted by $\mathcal{J}(f)$ is defined as the boundary of the filled-in Julia set $K(f) =\{z: \{f ^n (z)\}_{n>0}~\mbox{ is bounded}\} $. The reduced independence polynomial of a graph $G$ is defined as  $I_G(z)-1$. It is denoted by $P_G (z)$.  The relation between   $\mathcal{J}(P_G)$ and  $\mathcal{A}(G)$ is crucial and is used in the proofs of the above mentioned theorems. The following result makes this relation precise for all graphs with independence number three.
\begin{theorem} For a graph $G$ with independence polynomial $I_G (z)=1+a_1 z+a_2 z^2 +a_3 z^3$, 
 $\mathcal{A}(G)$ is the disjoint union of $  \mathcal{J} (P_G) $ and $\bigcup_{k\geq 1}~\mbox{Roots}~(I_{G^{k}}) $	if and only if $a_1 \geq 3, a_2 =2a_1 -3$ and $a_3 = a_1 -2$. In all other cases,  $\mathcal{A}(G) = \mathcal{J}(P_G)$.
 \label{IA-JS}
	\end{theorem}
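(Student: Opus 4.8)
The plan is to reduce the computation of $\mathcal A(G)$ to locating the single point $-1$ in the dynamical plane of the cubic $P_G$, and then to isolate the one coefficient family for which $-1$ is a superattracting fixed point. First I would record the composition formula for the lexicographic product: since an independent set of $H[K]$ is the same datum as an independent set $A$ of $H$ together with a nonempty independent set of $K$ at each vertex of $A$, one has $I_{H[K]}=I_H\circ(I_K-1)$; iterating this gives $P_{G^m}=P_G^{\circ m}$, hence $I_{G^m}=1+P_G^{\circ m}$, so that $\mathrm{Roots}(I_{G^m})=(P_G^{\circ m})^{-1}(\{-1\})$ and therefore $\bigcup_{k\ge 1}\mathrm{Roots}(I_{G^k})=O^-(-1)$, the backward orbit of $-1$ under $P_G$, while $\mathcal A(G)=\lim_{m\to\infty}(P_G^{\circ m})^{-1}(\{-1\})$ in the Hausdorff metric. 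Note that $\deg P_G=3$ and $P_G'(0)=a_1\ge 3$, so $0$ is always a repelling fixed point and $0\in\mathcal J(P_G)$; moreover $-1$ is a non-exceptional point of $P_G$ unless $a_1=3$, i.e. unless $G=\overline{K_3}$ (see Lemma~\ref{basic-obs}), which I would dispose of directly since in that case $P_G$ is conjugate to $z\mapsto z^3$.

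Next comes the dynamical dichotomy. By the blowing-up property of Julia sets, for a non-exceptional point the preimage sets $(P_G^{\circ m})^{-1}(\{-1\})$ become $\varepsilon$-dense in $\mathcal J(P_G)$ for all large $m$, so $\mathcal J(P_G)\subseteq\liminf_{m}(P_G^{\circ m})^{-1}(\{-1\})$; the reverse inclusion is where the location of $-1$ matters. If $-1\in\mathcal J(P_G)$, then all of its preimages lie in $\mathcal J(P_G)$, so $\mathcal A(G)=\mathcal J(P_G)$ and $O^-(-1)\subseteq\mathcal J(P_G)$, so there is no disjoint splitting. If $-1$ escapes to $\infty$ under $P_G$, then writing $g$ for the Green's function of $P_G$ at $\infty$ one has $g\equiv 3^{-m}g(-1)$ on $(P_G^{\circ m})^{-1}(\{-1\})$, and since the level sets $\{g=t\}$ shrink to $\mathcal J(P_G)$ as $t\downarrow 0$ the limit is again $\mathcal J(P_G)$, while $O^-(-1)$ — a countable subset of the Fatou set accumulating only on $\mathcal J(P_G)$ — is absorbed; so once more $\mathcal A(G)=\mathcal J(P_G)$. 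The only remaining possibility is that $-1$ lies in a bounded Fatou component, and this is precisely the case in which the limit can strictly contain $\mathcal J(P_G)$. Thus the whole theorem reduces to the assertion that $-1$ lies in a bounded Fatou component of $P_G$ if and only if $a_2=2a_1-3$ and $a_3=a_1-2$.

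The ``if'' direction is a computation: if $a_2=2a_1-3$ and $a_3=a_1-2$, then $P_G(-1)=-a_1+a_2-a_3=-1$ and $P_G'(-1)=a_1-2a_2+3a_3=0$, so $-1$ is a superattracting fixed point of $P_G$; its backward orbit lies in the Fatou set, hence is disjoint from $\mathcal J(P_G)$, and because $-1$ is fixed, every point of $O^-(-1)$ lies in $(P_G^{\circ m})^{-1}(\{-1\})$ for all large $m$, so $O^-(-1)\subseteq\liminf$. On the other hand, inside the superattracting basin the higher-order preimages of $-1$ that are not already in $O^-(-1)$ accumulate only on the basin boundary, which lies in $\mathcal J(P_G)$; combining these facts, $\mathcal A(G)=\mathcal J(P_G)\sqcup O^-(-1)=\mathcal J(P_G)\sqcup\bigcup_{k\ge1}\mathrm{Roots}(I_{G^k})$, a genuine disjoint union since $-1\in O^-(-1)$.

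The ``only if'' direction is the crux, and the step I expect to be the main obstacle. Suppose $-1$ lies in a bounded Fatou component $U$ of $P_G$. Since $P_G$ has real coefficients and $-1\in\mathbb R$, the forward orbit of $-1$ is real; a bounded Fatou component meeting $\mathbb R$ is not a Herman ring (polynomials have none) and not a Siegel disc (its centre, an irrationally indifferent periodic point, would have a non-real multiplier, hence be non-real), so $U$ is an attracting or parabolic basin carrying a real attracting or parabolic cycle, and the orbit of $-1$ must stay in $(-\infty,0]$ (otherwise it tends to $+\infty$, a case already handled). I would then analyse the one-dimensional dynamics of $P_G(x)=x(a_3x^2+a_2x+a_1)$ on $(-\infty,0]$, using the repelling fixed point $0$ with multiplier $a_1\ge 3$, the possible further real fixed points $\tfrac{-a_2\pm\sqrt{a_2^2-4a_3(a_1-1)}}{2a_3}$, the real critical points $c_1,c_2$, and the integrality and positivity of $a_1,a_2,a_3$, to conclude that the only real attracting or parabolic cycle which can capture $-1$ while keeping it in a bounded Fatou component is the fixed point $-1$ itself; this forces $P_G(-1)=-1$, i.e. $a_2=a_1+a_3-1$, and then the multiplier $P_G'(-1)=a_3-a_1+2$ is an integer that must have modulus less than one (it cannot be $\pm1$, as $-1$ would then be a parabolic point and hence in $\mathcal J(P_G)$), so it equals $0$, giving $a_3=a_1-2$ and $a_2=2a_1-3$. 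The delicate points are excluding real attracting cycles of period $\ge 2$ that absorb $-1$, and bounded basins containing $-1$ without $-1$ being (pre)periodic; I expect these to be handled by an interval-dynamics argument on $(-\infty,0]$ together with the bound $a_1\ge3$ and the standard coefficient inequalities for independence polynomials, with perhaps a few small values of $a_1$ checked by hand.
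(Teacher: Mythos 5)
Your overall route differs from the paper's: the paper does not reprove any dynamics here --- its entire proof of Theorem~\ref{IA-JS} consists of translating the hypotheses of the quoted Lemma~\ref{JS-attractor} (Hickman's dichotomy) into coefficients, namely $I_G(-1)=0$ and $I_G'(-1)=0$ iff $a_2=2a_1-3$ and $a_3=a_1-2$ (with $a_1\ge 3$ automatic since $a_3\ge 1$). You instead undertake to reprove that dichotomy for cubics. Your composition formula, the cases $-1\in\mathcal{J}(P_G)$ and $-1$ escaping, and the ``if'' direction (a superattracting fixed point at $-1$ gives $\mathcal{A}(G)=\mathcal{J}(P_G)\sqcup\bigcup_{k\ge1}\mbox{Roots}(I_{G^k})$) are essentially sound. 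That re-derivation is legitimate, but it puts the burden of the hard direction on you, and that is exactly what is missing.

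The genuine gap is the ``only if'' direction, which you acknowledge but defer, and the tools you propose (interval dynamics on $(-\infty,0]$, the coefficient inequalities, checking small $a_1$) will not close it. You must exclude: (i) $-1$ lying on an attracting or superattracting cycle of period $\ge 2$; (ii) $-1$ lying in a bounded Fatou component while being strictly preperiodic --- for instance falling after finitely many steps exactly onto an attracting fixed point $\delta_2\neq-1$ --- in which case $-1$ is in a bounded Fatou component yet your intended conclusion $P_G(-1)=-1$ fails, so your reduction ``$-1$ in a bounded Fatou component iff $a_2=2a_1-3,\ a_3=a_1-2$'' would be false unless this scenario is ruled out (or unless you separately prove that in this scenario the Hausdorff limit is still $\mathcal{J}(P_G)$); and (iii) $-1$ in a parabolic basin. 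None of these follows from real one-dimensional dynamics alone; what kills them is arithmetic, which your outline never invokes: $P_G$ has integer coefficients and no constant term, so the forward orbit of $-1$ consists of integers and $v\mid P_G(v)$ for every integer $v$. Consequently any cycle reached by the orbit of $-1$ consists of negative integers, the divisibility along such a cycle forces period one, the multiplier at an integer fixed point is an integer so attracting means superattracting, and $P_G(-t)=-t$, $P_G'(-t)=0$ give $t(a_2-2a_3t)=1$, hence $t=1$, i.e.\ the fixed point is $-1$ itself; likewise an integer orbit cannot converge to a parabolic point without landing on it, which would place $-1$ in $\mathcal{J}(P_G)$. Without an argument of this kind (or simply citing Lemma~\ref{JS-attractor}, as the paper does), the crux of the theorem is unproved. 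A minor additional point: your exclusion of Siegel disks only shows their centres are non-real; to see that a Siegel disk cannot contain the real point $-1$ you need the symmetry $U\mapsto\overline{U}$ coming from the real coefficients.
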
 
	For different values of $a_1$ in Theorem~\ref{IA-JS}, we have some remarks.
\begin{Remark}
	\begin{enumerate}
		\item If $a_1 =3$ then $I_G (z)=1+ 3 z+ 3 z^2 + z^3$ and  clearly, $a_2 =2a_1 -3$ and $a_3 = a_1 -2$. Theorem~\ref{unicritical} deals with this situation. For $a_1 >3$, $a_2 =2a_1 -3$ and $ a_3 =a_1 -2$ imply that $a_2^2=\frac{a_3(2a_1-3)^2}{a_1-2}$. This is described in Theorem~\ref{bicritically real-two real fixed points}(1)(b).
However, the condition $a_2^2=\frac{a_3(2a_1-3)^2}{a_1-2}$ does not always ensure that $a_2 =2a_1 -3, a_3 =a_1 -2 $, e.g., $(a_1, a_2, a_3) =(12, 42, 40)$.  
\item  Let $a_2^2=\frac{a_3(2a_1-3)^2}{a_1-2}$. If $a_1 \leq 8$ then $-1$ is a multiple root of $I_G$ (see the proof of Theorem~\ref{bicritically real-two real fixed points}(1(b))) and consequently,   $\mathcal{A}(G)$ is the disjoint union of $\mathcal{J} (P_G)$ and $ \bigcup_{k\geq 1}~\mbox{Roots}~(I_{G^{k}})$. If $a_1 >8$ then   both the possibilities, as stated in Theorem~\ref{IA-JS} can occur. For example, $(a_1, a_2, a_3)=(12, 42,40)$ gives that $\mathcal{A}(G) =\mathcal{J} (P_G)$ whereas for $(a_1, a_2, a_3) =(9, 15,7)$,  the other possibility holds.
\end{enumerate} 
	\label{IA-JS-rem} 
\end{Remark}
Examples of graphs satisfying the hypotheses of Theorems~\ref{bicritically non-real}, \ref{unicritical}, \ref{bicritically real-two non-real fixed points or one fixed point} and \ref{bicritically real-two real fixed points} are provided. These examples also show  that there is a sequence of graphs in each case for which the independence attractor has arbitrarily large diameter. The diameter of a set $A \subset \mathbb{C}$ is the supremum of the set $\{|z-w|: z, w \in A\}$.
\begin{theorem}
	There exists three sequences $\{G_1 (n)\}_{n>0}, \{G_2 (n)\}_{n>0}$ and $\{G_3 (n)\}_{n>0}$  of bicritically non-real, unicritical and bicritically real graphs respectively such that $diam (\mathcal{A}(G_i (n))) \to \infty$ as $n \to \infty$ for each $i=1,2,3$. 
	\label{examples}
\end{theorem}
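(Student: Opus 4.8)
The plan is to deduce everything from one elementary lower bound on the diameter of $\mathcal{J}(P_G)$ expressed through the coefficients of $I_G$. By Theorem~\ref{IA-JS}, for every graph $G$ with independence number three one has $\mathcal{J}(P_G)\subseteq\mathcal{A}(G)$: either $\mathcal{A}(G)=\mathcal{J}(P_G)$, or $\mathcal{A}(G)$ is the disjoint union of $\mathcal{J}(P_G)$ with the countable set $\bigcup_{k\geq 1}\text{Roots}(I_{G^k})$. Hence it is enough to bound $\operatorname{diam}\mathcal{J}(P_G)$ from below, and I claim the uniform estimate $\operatorname{diam}\mathcal{J}(P_G)\geq\sqrt{a_1/a_3}$ holds for every such $G$. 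Indeed $P_G(z)=a_1z+a_2z^2+a_3z^3$ fixes $0$ with multiplier $P_G'(0)=a_1\geq 3>1$, so $0$ is a repelling fixed point and thus $0\in\mathcal{J}(P_G)$; since the Julia set is completely invariant, $P_G^{-1}(0)\subseteq\mathcal{J}(P_G)$, and the two nonzero elements of $P_G^{-1}(0)$ are the roots $z_1,z_2$ of $a_3z^2+a_2z+a_1=0$. From $z_1z_2=a_1/a_3>0$ we get $\max\{|z_1|,|z_2|\}\geq\sqrt{a_1/a_3}$; taking $z^\ast$ to be that root, both $0$ and $z^\ast$ lie in $\mathcal{J}(P_G)$, which gives the bound. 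It therefore remains to exhibit, in each of the three regimes, a sequence of graphs with independence number three along which $a_1/a_3\to\infty$.

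For the constructions I would realise all three sequences in the single form $G=\overline{K_3}+J$, the join of $\overline{K_3}$ with a graph $J$ satisfying $\alpha(J)\leq 2$. Using the standard identities $I_{G_1+G_2}=I_{G_1}+I_{G_2}-1$, $\alpha(G_1+G_2)=\max\{\alpha(G_1),\alpha(G_2)\}$, $I_{G_1\cup G_2}=I_{G_1}I_{G_2}$, $I_{K_n}=1+nz$ and $I_{\overline{K_m}}=(1+z)^m$, such a $G$ has $\alpha(G)=3$ and $I_G(z)=1+(b_1+3)z+(b_2+3)z^2+z^3$, where $I_J(z)=1+b_1z+b_2z^2$ (with $b_2=0$ allowed when $\alpha(J)=1$); thus $a_1=b_1+3=|V(J)|+3$, $a_2=b_2+3$, $a_3=1$, so $a_1/a_3=|V(J)|+3$ and $a_2^2-3a_1a_3=b_2(b_2+6)-3b_1$. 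The choice of $J$ then dictates the regime: (i) $G_1(n)=\overline{K_3}+K_n$, where $b_1=n$ and $b_2=0$, so $a_2^2-3a_1a_3=-3n<0$ for $n\geq 1$ (bicritically non-real); (ii) $G_3(n)=\overline{K_3}+(K_n\cup K_n)$, where $b_1=2n$ and $b_2=n^2$, so $a_2^2-3a_1a_3=n(n^3+6n-6)>0$ for $n\geq 1$ (bicritically real); (iii) $G_2(n)=\overline{K_3}+H_n$, where $H_n$ is the complement of the disjoint union of a matching with $3n$ edges and $3n^2$ isolated vertices, so that $H_n$ has $3n(n+2)$ vertices, $3n$ non-edges and triangle-free complement, whence $\alpha(H_n)=2$, $b_1=3n(n+2)$, $b_2=3n$ and $a_2^2-3a_1a_3=9n(n+2)-9n(n+2)=0$ (unicritical). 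In each sequence $|V(J)|\to\infty$, hence $a_1/a_3\to\infty$, and the first paragraph yields $\operatorname{diam}\mathcal{A}(G_i(n))\to\infty$.

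The one delicate point is the unicritical case, since $a_2^2=3a_1a_3$ is a codimension-one condition that must be met \emph{exactly} by a genuine independence polynomial, and the most obvious unicritical families have vanishing diameter — for instance $K_m\cup K_m\cup K_m$ satisfies $a_2^2=3a_1a_3$, but there $P_G$ is conjugate via $w=mz+1$ to $w\mapsto mw^3-(m-1)$, whose filled Julia set lies in the closed unit disc, so $\operatorname{diam}\mathcal{A}\to 0$. The way around this is to prescribe the coefficients first: imposing $b_2=3n$ forces $b_1=3n(n+2)$ from $b_2(b_2+6)=3b_1$, after which one only needs $1+3n(n+2)z+3nz^2$ to be the independence polynomial of a graph with independence number two, i.e.\ a triangle-free graph on $3n(n+2)$ vertices with exactly $3n$ edges — which clearly exists (a matching of $3n$ edges padded with isolated vertices). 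The remaining work — verifying the three sign computations for $a_2^2-3a_1a_3$, confirming $\alpha(G_i(n))=3$ and $a_3=1$ for each join, and checking $a_1/a_3\to\infty$ — is routine arithmetic, and I do not expect any further obstacle.
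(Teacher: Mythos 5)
Your proposal is correct, but it reaches the conclusion by a different mechanism than the paper. The paper proves the theorem by exhibiting, in each regime, an explicit non-zero repelling or parabolic fixed point $z_n$ of $P_{G(n)}$ with $|z_n|\to\infty$ (e.g.\ $nz+3z^2+z^3$, $3n^2z+3nz^2+z^3$, and $(n^2+1)z+2nz^2+z^3$ with parabolic point $-n$), and uses the fact that such fixed points, together with $0$, lie in $\mathcal{A}(G)$. You instead prove a general lower bound $\operatorname{diam}\mathcal{A}(G)\geq\sqrt{a_1/a_3}$, valid for every graph of independence number three, by noting that $0$ is a repelling fixed point, that $\mathcal{J}(P_G)$ is completely invariant, and that the non-zero preimages of $0$ are the roots of $a_3z^2+a_2z+a_1$, whose product is $a_1/a_3$; combined with $\mathcal{J}(P_G)\subseteq\mathcal{A}(G)$ (Theorem~\ref{IA-JS}) this reduces the whole theorem to producing families with $a_1/a_3\to\infty$ in each regime, which your join constructions $\overline{K_3}+J$ (with $a_3=1$) do: the sign computations $a_2^2-3a_1a_3=-3n$, $0$, and $n(n^3+6n-6)$ are all correct, and the unicritical family $1+3(n+1)^2z+3(n+1)z^2+z^3$ is genuinely realizable as described. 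Your bicritically non-real and unicritical families are in fact essentially the paper's graphs in disguise (a complete graph minus the edges of a triangle is exactly $\overline{K_3}$ joined to a complete graph, and your unicritical family is the paper's reindexed), while your bicritically real family $\overline{K_3}+(K_n\cup K_n)$ differs from the paper's. What your route buys is flexibility: the coefficient bound shows that \emph{any} sequence in a given regime with $a_1/a_3\to\infty$ has attractors of unbounded diameter, and it sidesteps locating fixed points; what the paper's route buys is that the distinguished far-away points are dynamically meaningful (repelling/parabolic fixed points), which it reuses elsewhere. Both arguments are complete.
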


Several other examples can also be constructed. 
\par  
Some known as well as new results on independence attractors and polynomial Julia sets including the proof of  Theorem~\ref{IA-JS} are provided in Section~\ref{prelim}. Section 3 contains the proofs of Theorems~\ref{bicritically non-real}, \ref{unicritical}, \ref{bicritically real-two non-real fixed points or one fixed point}, ~\ref{bicritically real-two real fixed points} and ~\ref{examples}.
By saying a reduced independence polynomial, we mean a reduced independence polynomial of some graph throughout this article. 

\section{Independence attractors and polynomial Julia sets}\label{prelim}

A key fact relating to the lexicographic product of a graph with itself and its independence polynomial is given by  $I_{G^2}(z) =I_G (I_G (z)-1)$ (see Theorem 1.1,~\cite{brownetal2003}). Observe that  $I_{G^2}(z) =P_G (P_G (z))+1=P_G ^2 (z)+1$, and in general, $I_{G^m}(z) =P_G ^m (z)+1$. In other words,
\begin{equation}
	Roots(I_{G^m})  =\{z: P_G^m (z)=-1\}~\mbox{for each}~ m.
\end{equation}   
By virtue of this equality, the iterative behavior of $P_G$ assumes significance in the study of the independence attractor of $G $. This simple but very important fact  gives rise to the following equivalent definition of the independence attractor.
\begin{definition}$\mathcal{A}(G) =  \lim\limits_{m \to \infty}   \{z: P_G^m (z)=-1\}$.
\end{definition}

It is   known that the Julia set $\mathcal{J}(P_G)$ of $P_G$ is the same as $\lim\limits_{k \to \infty} {Roots}~(P_{G^{k}})$  (see Theorem 3.3, \cite{brownetal2003}). This is probably the reason why $\mathcal{J}(P_G)$ is referred to as the independence fractal of $G$.  Hickman  proved that  $\mathcal{A}(G) \supseteq \mathcal{J}(P_G)$ for every graph $G$, and the equality holds almost always \cite{moumita-2025}. To state this result, recall that ${Roots}~(I_{G^{k}}) =\{z:I_{G^{k}}(z)=0\}$.  
%
%
%
\begin{lemma}\label{JS-attractor} 
Let $G$ be a non-empty graph. 
	\begin{enumerate}
		\item If  $I_G (-1) \neq 0$, or  $I_G (-1) = 0$ with $I_G ' (-1) \neq 0$  then $\mathcal{A}(G)=\mathcal{J}(P_G) $.
		\item If  $I_G (-1) = 0$ with $I_G ' (-1) = 0$  then $\mathcal{A}(G)$ is the disjoint union of $  \mathcal{J} (P_G) $ and $ \bigcup_{k\geq 1}~\mbox{Roots}~(I_{G^{k}})$.
	\end{enumerate}

\end{lemma}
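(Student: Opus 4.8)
The plan is to work throughout with the reformulation $\text{Roots}(I_{G^m})=\{z:P_G^m(z)=-1\}=P_G^{-m}(-1)$, so that $\mathcal{A}(G)=\lim_m P_G^{-m}(-1)$, and to compare the backward orbit of $-1$ under $P_G$ with that of $0$, for which $\mathcal{J}(P_G)=\lim_k P_G^{-k}(0)$ is already available. First I would record two structural facts. Since $P_G(0)=I_G(0)-1=0$ and $P_G'(0)=I_G'(0)=a_1\ge 2$, the point $0$ is a repelling fixed point of $P_G$; in particular $0\in\mathcal{J}(P_G)$. And the hypothesis of the lemma is really a statement about the point $-1$: $I_G(-1)=0$ says exactly that $P_G(-1)=-1$, i.e. that $-1$ is a fixed point of $P_G$, and then $P_G'(-1)=I_G'(-1)$. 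Thus case (1) says that $-1$ is either not a fixed point of $P_G$, or a fixed point whose multiplier $I_G'(-1)$ is a nonzero integer — hence of modulus at least $1$, so $-1$ is repelling or parabolic and $-1\in\mathcal{J}(P_G)$; while case (2) says precisely that $-1$ is a superattracting fixed point of $P_G$.

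For case (1) the target is $P_G^{-m}(-1)\to\mathcal{J}(P_G)$, and I would split according to where $-1$ lies in the dynamical plane of $P_G$. If $-1\in\mathcal{J}(P_G)$ (automatic when $I_G(-1)=0\ne I_G'(-1)$): the backward orbit of $-1$ stays in $\mathcal{J}(P_G)$ and, since Julia-set points are non-exceptional, is dense there; combined with the blowing-up property — for $w_0\in\mathcal{J}(P_G)$ and $\epsilon>0$, a neighbourhood $U$ of $w_0$ satisfies $P_G^n(U)\supseteq\mathcal{J}(P_G)\ni -1$ for all large $n$, so $U$ meets $P_G^{-n}(-1)$ for all large $n$ — this makes $P_G^{-m}(-1)$ an $\epsilon$-net of $\mathcal{J}(P_G)$ for all large $m$, whence $P_G^{-m}(-1)\to\mathcal{J}(P_G)$. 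If $I_G(-1)\ne 0$ and $-1\notin\mathcal{J}(P_G)$, then $-1$ lies either in the basin of $\infty$ or in a bounded Fatou component. In the first case every point of $P_G^{-m}(-1)$ lies on the equipotential $\{g_{P_G}=g_{P_G}(-1)/d^{m}\}$ of the Green's function $g_{P_G}$ of $\mathbb{C}\setminus K(P_G)$ ($d=\deg P_G$), these equipotentials shrink to $\mathcal{J}(P_G)=\partial K(P_G)$, and the external arguments of the preimages become $1/d^{m}$-dense, so again $P_G^{-m}(-1)\to\mathcal{J}(P_G)$. In the second case $-1$ lies in the basin of a (super)attracting or parabolic cycle; uniformising the relevant immediate basin and writing $P_G$ (or an iterate) there as a finite Blaschke product of degree $\ge 2$, whose inverse branches are strict hyperbolic contractions on compact subsets, together with a Lyapunov estimate for the attraction to the cycle, one sees that the $m$-th preimages of any non-cycle point leave every compact subset of the basin and thus accumulate only on the basin boundary $\subseteq\mathcal{J}(P_G)$; with density this again gives $P_G^{-m}(-1)\to\mathcal{J}(P_G)$. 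In every sub-case, $\mathcal{A}(G)=\mathcal{J}(P_G)$.

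For case (2), with $-1$ a superattracting fixed point of $P_G$, the sets $P_G^{-m}(-1)=\text{Roots}(I_{G^m})$ form a nested increasing sequence of finite sets (if $P_G^m(z)=-1$ then $P_G^{m+1}(z)=P_G(-1)=-1$), all lying in the basin $\mathcal{B}$ of $-1$, an open set disjoint from $\mathcal{J}(P_G)$; hence $\bigcup_{m\ge 1}\text{Roots}(I_{G^m})\subseteq\mathcal{B}$ is disjoint from $\mathcal{J}(P_G)$ and $\mathcal{A}(G)=\overline{\bigcup_{m\ge 1}\text{Roots}(I_{G^m})}$. Hickman's inclusion $\mathcal{A}(G)\supseteq\mathcal{J}(P_G)$ places $\mathcal{J}(P_G)$ inside this closure; conversely the Blaschke-coordinate / Lyapunov argument on $\mathcal{B}$ — now applied to the preimages of $-1$ whose orbit reaches $-1$ only at the last step — shows that every accumulation point of $\bigcup_{m\ge 1}\text{Roots}(I_{G^m})$ lies on $\partial\mathcal{B}\subseteq\mathcal{J}(P_G)$, while the shallow preimages (the zeros of $I_G$ other than $-1$, and their iterated preimages) are isolated in $\bigcup_{m\ge 1}\text{Roots}(I_{G^m})$. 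Combining, $\mathcal{A}(G)=\mathcal{J}(P_G)\ \sqcup\ \bigcup_{m\ge 1}\text{Roots}(I_{G^m})$, a disjoint union.

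The step I expect to be the main obstacle is the Fatou-set bookkeeping needed to pin down the Hausdorff limit exactly. In case (1) with $-1$ in a bounded Fatou component one must exclude that this component belongs to a cycle through $-1$ of period $\ge 2$, which would return $-1\notin\mathcal{J}(P_G)$ into $\text{Roots}(I_{G^m})$ along an arithmetic progression of $m$ and destroy the limit; this is handled using the already-established existence of $\lim_m\text{Roots}(I_{G^m})$ (Section~\ref{prelim}), which forces $-1$ to lie on a periodic cycle of $P_G$ only when that cycle is a parabolic cycle (so $-1\in\mathcal{J}(P_G)$, already covered) or the fixed point $-1$ itself. Granting that, the Green's-function estimate and the hyperbolic-contraction / Lyapunov estimate on attracting and superattracting basins are standard, and the residual algebra — e.g. for degree-three $I_G$, recognising the multiplier $I_G'(-1)$ as the integer $a_3-a_1+2$ — is immediate.
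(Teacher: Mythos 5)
The paper does not actually prove Lemma~\ref{JS-attractor}; it is quoted from \cite{moumita-2025}, so there is no in-paper proof to compare with. Your strategy is the standard and essentially correct one: rewrite $\mbox{Roots}(I_{G^m})=P_G^{-m}(-1)$, split according to the position of $-1$ in the dynamical plane of $P_G$, get the lower bound ($\epsilon$-density of the preimages in $\mathcal{J}(P_G)$) from the Montel blow-up property, get the upper bound from the fact that high-order preimages leave every compact subset of an attracting or parabolic basin (and of the basin of $\infty$), and in case (2) describe $\mathcal{A}(G)$ as the closure of the nested increasing union $\bigcup_m P_G^{-m}(-1)$, whose accumulation set lies on $\partial\mathcal{B}=\mathcal{J}(P_G)$. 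Those steps are sound as sketched.

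Two genuine gaps remain. First, in case (1) your classification of the bounded Fatou component containing $-1$ as ``basin of a (super)attracting or parabolic cycle'' is not exhaustive: a bounded Fatou component of a polynomial can also be a Siegel disc, or a preimage of one, and this is exactly the configuration in which a backward orbit can accumulate \emph{inside} the Fatou set (inside an invariant Siegel disc the single in-disc preimage of $-1$ equidistributes along an invariant curve), so the Blaschke/Lyapunov contraction argument does not apply there. This case must be addressed explicitly -- either excluded by the same appeal to the known existence of $\lim_m \mbox{Roots}(I_{G^m})$ that you use for periodic $-1$, or by a direct argument that strict preimages of a rotation cycle cannot trap accumulation points of $P_G^{-m}(-1)$. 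Second, in case (2) you import Hickman's inclusion $\mathcal{J}(P_G)\subseteq\mathcal{A}(G)$, which is precisely the delicate half of the statement, and as a result you never use the hypothesis that $G$ is non-empty. That hypothesis is what rules out $I_G(z)=(1+z)^{a_1}$ (the edgeless graph), for which $P_G$ is conjugate to $z\mapsto z^{a_1}$ and $-1$ is an exceptional point: then $\bigcup_{k\geq 1}\mbox{Roots}(I_{G^k})=\{-1\}$, its closure does not contain $\mathcal{J}(P_G)$, and the argument you give for case (2) collapses. A self-contained proof should obtain the lower bound in case (2) by the same Montel blow-up argument as in case (1), which forces you to verify that $-1$ is not exceptional for $P_G$ -- and that verification is exactly where the non-emptiness hypothesis enters. (Minor: for a superattracting fixed point the correct local model is the B\"ottcher coordinate rather than a Blaschke product, though the escape-from-compact-sets conclusion you need is the same.)
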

Lemma~\ref{JS-attractor} leads to a proof of Theorem~\ref{IA-JS}.
\begin{proof}[Proof of Theorem~\ref{IA-JS}]
Let $I_G(z)=1+a_1 z+a_2 z^2 +a_3 z^3$. If $I_G (-1)=0$ and $I_G '(-1)=0$ then   $a_3 = a_1 -2$ and $ a_2 =2 a_1 -3$. Converse is clearly true, and we are done by Lemma~\ref{JS-attractor}. 
	\end{proof}
In view of Lemma~\ref{JS-attractor}, the study of $\mathcal{A}(G)$ calls for understanding the Julia set of $P_G$. Recall that the Julia set of a polynomial $f$ with degree at least two, denoted by $\mathcal{J}(f)$ is defined as the boundary of the filled-in Julia set $K(f) =\{z: \{f ^n (z)\}_{n>0}~\mbox{ is bounded}\} $. The Fatou set of $f$ is defined as $\widehat{\mathbb{C}} \setminus \mathcal{J}(f)$. Equivalently, the Fatou set of $f$ is the set of all points in $\mathbb{C} \cup \{\infty\}$ where the sequence $\{f^n\}_{n>0}$ is equicontinuous.
The set $\{z: f^n (z) \to \infty~\mbox{as}~ n \to \infty\}$ is open and connected and is  known as the basin of $\infty$. This is always contained in the Fatou set of $f$. The Julia set is also the same as the boundary of the basin of $\infty$.  

\par Recall that a point $z_0$ for which $f(z_0)=z_0$ is called a fixed point of $f$. It is called attracting, neutral or repelling if $|f'(z_0)|<1, =1$ or $>1$ respectively. If $f'(z_0)=0$ then $z_0$ is called a super-attracting fixed point of $f$.  Repelling and parabolic fixed points (i.e., neutral fixed points for which $f'(z_0)$ is a root of unity) are always in the Julia set whereas attracting fixed points are known to be in the Fatou set of the polynomial.
  For an attracting or a parabolic fixed point $z_0$ of $f$, the set  $\{z: f^n(z) \to z_0 ~\mbox{as}~n \to \infty\}$ is known as the attracting  or parabolic basin of $z_0$ respectively. The connected component of the attracting basin of $z_0$ containing $z_0$ is called the attracting domain of $z_0$. Similarly, the connected subset of the parabolic basin of $z_0$ whose boundary contains $z_0$ is called a parabolic domain of $z_0$. There can be more than one parabolic domain corresponding to a parabolic fixed point.  All these facts can be found in \cite{Beardon_book}.

\par 
We now present a result  on the topology of the Julia set of a polynomial by putting Theorems 9.5.1 and 9.8.1 of \cite{Beardon_book} together. The forward orbit of a point $z$ under a polynomial $f$ is the set $\{f^n (z): n >0\}$.
\begin{lemma}
	Let $f$ be a polynomial with degree at least two. 
	\begin{enumerate}
		\item Its Julia set is connected if and only if the forward orbit of each of its critical points is bounded.
		
		\item Its Julia set is totally disconnected if the forward orbit of each of its critical points is unbounded. 
	\end{enumerate} 
	\label{connected-Juliaset}
\end{lemma}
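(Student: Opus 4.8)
The plan is to deduce this classical fact by quoting and combining the two relevant theorems of \cite{Beardon_book}, rather than rebuilding complex dynamics from scratch. The first move I would make is the standard reduction from the Julia set to the basin of $\infty$: for a polynomial $f$ of degree $d \geq 2$ one has $\mathcal{J}(f) = \partial K(f) = \partial A(\infty)$, where $A(\infty)$ is the (connected, open) basin of $\infty$; so the connectedness of $\mathcal{J}(f)$ is governed by whether $A(\infty)$ is simply connected, and the total disconnectedness of $\mathcal{J}(f)$ will be read off from a ``Cantor repeller'' description of $f$ near $\mathcal{J}(f)$.

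For part (1) I would invoke the B\"ottcher coordinate at $\infty$: since $d \geq 2$, near $\infty$ the map $f$ is conformally conjugate to $w \mapsto w^d$, and this conjugacy extends through $A(\infty)$ exactly until it meets a critical value. Hence $A(\infty)$ is simply connected if and only if it contains no critical point of $f$, i.e. if and only if every critical point lies in $K(f)$, i.e. if and only if every critical orbit is bounded. When $A(\infty)$ is simply connected, $K(f) = \widehat{\mathbb{C}} \setminus A(\infty)$ is a full continuum and its boundary $\mathcal{J}(f)$ is connected; when some critical orbit escapes, $A(\infty)$ fails to be simply connected and $K(f)$, hence $\mathcal{J}(f)$, is disconnected. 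This is Theorem 9.5.1 of \cite{Beardon_book}.

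For part (2) I would use the Green's function $G$ of $A(\infty)$, which satisfies $G \circ f = d\,G$: if every critical orbit is unbounded then every critical point, and therefore every critical value, lies in $A(\infty)$, so one may fix a level $\epsilon$ strictly below all critical values of $G$ and take $U = \{z : G(z) < \epsilon\}$ as a neighbourhood of $\mathcal{J}(f)$. Then $f$ maps $f^{-1}(U)$ onto $U$ as an unbranched covering of degree $d$, the nested sets $f^{-n}(U)$ shrink to $K(f)$, and a Koebe-type distortion estimate (equivalently, a comparison in the hyperbolic metric of $U \setminus \mathcal{J}(f)$) forces the diameters of the components of $f^{-n}(U)$ to zero. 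This realises $\mathcal{J}(f)$ as a homeomorph of a Cantor set, in particular totally disconnected; it is Theorem 9.8.1 of \cite{Beardon_book}.

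I do not expect a genuine obstacle here; the points that need care are bookkeeping. First, I would verify that the two hypotheses as stated --- every critical orbit bounded, respectively every critical orbit unbounded --- match exactly the forms in which Beardon phrases the two theorems (which are often stated via critical points lying in $K(f)$, or escaping to $\infty$). Second, I would stress that part (2) is a one-way implication: a totally disconnected Julia set does not force every critical orbit to escape, so the statement must stay an implication rather than an equivalence, and the likeliest slip is to silently import the converse of (2) out of the equivalence in (1).
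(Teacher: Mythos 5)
Your proposal is correct and matches the paper's treatment: the paper gives no independent proof of this lemma, stating it precisely as the combination of Theorems 9.5.1 and 9.8.1 of \cite{Beardon_book}, which is exactly what you invoke (your sketches of the B\"ottcher-coordinate and Green's-function arguments are the standard proofs of those theorems, and your caution that part (2) is only a one-way implication is consistent with how the paper uses it).
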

Some basic properties of $\mathcal{J}(P_G)$ are proved in the following lemma, which is slightly more general than what is required. It is pertinent to mention that the forward orbit of a point under a polynomial necessarily tends to $\infty$ whenever it is unbounded. We say a point escapes (under the iteration of a polynomial) if its forward orbit is unbounded. 

\begin{lemma}\label{Juliaset-prop}
	Let $f$ be a polynomial with positive coefficients and degree at least two.  Then, $\lim \limits_{n \to \infty}f^n (z) = \infty$ if and only if $\lim \limits_{n \to \infty} f^n(\overline{z}) = \infty$ and consequently, $z \in \mathcal{J}(f)$ if and only if  $\overline{z} \in \mathcal{J}(f)$. Further, if    $f(0)=0$ and $f'(0)>1$ then the following are true.
	\begin{enumerate}
\item The Julia set of $f$ contains $0$.
\item  $\lim\limits_{n \to \infty} f^n (x) = +\infty$ for each $x>0$ and consequently, the Julia set of $f$ does not intersect the positive real axis.
\item If $f$ has no negative fixed point  and all its critical points are real then  the Julia set of $f$ is totally disconnected.
	\end{enumerate} 
\end{lemma}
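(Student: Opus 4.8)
The plan is to dispose of the conjugation-symmetry claim first and then the three numbered assertions, the last being the only substantial one. For the symmetry, I would use that $f$ has real coefficients, so $f(\overline z)=\overline{f(z)}$ and hence $f^n(\overline z)=\overline{f^n(z)}$, giving $|f^n(\overline z)|=|f^n(z)|$ for every $n$. Thus the orbit of $z$ tends to $\infty$ exactly when the orbit of $\overline z$ does (using that an unbounded polynomial orbit tends to $\infty$). Consequently the basin of $\infty$ is invariant under complex conjugation, and since conjugation is a homeomorphism of the plane it commutes with taking boundaries, so $\mathcal{J}(f)$ — the boundary of that basin — is conjugation-invariant as well; this is exactly $z\in\mathcal{J}(f)\iff\overline z\in\mathcal{J}(f)$.

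Now assume $f(0)=0$ and $f'(0)>1$, and write $f(z)=a_1 z+a_2 z^2+\cdots+a_d z^d$ with nonnegative coefficients and $a_1=f'(0)>1$. Part (1) is immediate: $0$ is a fixed point with $|f'(0)|>1$, hence repelling, hence contained in $\mathcal{J}(f)$ by the standard fact recalled before the lemma. For part (2), observe that $f$ maps $(0,\infty)$ into itself and that $f(x)\geq a_1 x$ for every $x>0$; iterating, $f^n(x)\geq a_1^n x\to\infty$, so every positive real lies in the basin of $\infty$, which is contained in the Fatou set. Hence $\mathcal{J}(f)$ misses the positive real axis.

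For part (3) the strategy is to invoke Lemma~\ref{connected-Juliaset}(2): it suffices to show the forward orbit of every critical point is unbounded. By hypothesis all critical points are real, and $0$ is not one since $f'(0)=a_1>1\neq 0$. A positive critical point escapes to $+\infty$ by part (2). For a negative critical point $c$, I would first show that $f(x)<x$ for every $x<0$: the function $x\mapsto f(x)-x$ is continuous on $(-\infty,0)$, equals $(a_1-1)x+O(x^2)$ near $0$ and is therefore negative just to the left of $0$, and it has no zero on $(-\infty,0)$ because $f$ has no negative fixed point; a continuous zero-free function on an interval keeps its sign, so $f(x)-x<0$ throughout. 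Then the orbit of $c$ satisfies $f^{n+1}(c)<f^n(c)<0$ for all $n$, so it is strictly decreasing and stays negative; it cannot converge to a finite limit, since such a limit would be a negative fixed point of $f$, so it decreases to $-\infty$ and is unbounded. All critical orbits being unbounded, Lemma~\ref{connected-Juliaset}(2) yields that $\mathcal{J}(f)$ is totally disconnected.

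The one point that needs a little care is the negative-critical-point case in (3): the work is in turning the hypothesis ``no negative fixed point'' into the global inequality $f(x)<x$ on all of $(-\infty,0)$ via the sign-constancy of $f(x)-x$ on that interval. Once this is in hand, the monotone escape to $-\infty$ and the appeal to Lemma~\ref{connected-Juliaset} are routine, and the remaining parts are essentially immediate.
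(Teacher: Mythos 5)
Your proposal is correct and follows essentially the same route as the paper: conjugation symmetry from real coefficients, $0$ repelling for (1), escape of the positive axis for (2), and for (3) the sign-constancy of $f(x)-x$ on $(-\infty,0)$ (negative near $0^-$, zero-free by the no-negative-fixed-point hypothesis), giving monotone escape of negative critical orbits to $-\infty$ and an appeal to Lemma~\ref{connected-Juliaset}(2). The only cosmetic differences are your growth bound $f^n(x)\geq a_1^n x$ in place of the paper's monotonicity argument for (2), and phrasing the Julia-set symmetry via the conjugation-invariance of the basin of $\infty$ rather than equicontinuity.
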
	
\begin{proof}
	Since all the coefficients of $f$ are real, $f(\overline{z})=\overline{f(z)}$. Consequently, $f^n(\overline{z})=\overline{f^n(z)}$ for all $n$.
	Therefore  $\lim \limits_{n \to \infty}f^n (z) = \infty$ if and only if $\lim \limits_{n \to \infty} f^n(\overline{z}) = \infty$. It also follows that $\{f^n\}_{n>0}$ is equicontinuous at $z$ if and only if it is equicontinuous at $\overline{z}$.
	In other words, $z\in \mathcal{J}(f)$ if and only if  $\overline{z}\in \mathcal{J}(f) $. 
	
	\begin{enumerate}
		\item  Since  $f(0)=0$  and $f'(0)>1$, $0$ is a repelling fixed point of $f$ and therefore it is in the Julia set. 
		\item  Consider $g(x)= f(x)-x$. Then $g'(x)=f'(x) -1$ and $g''(x)=f''(x) >0$ for each $x>0$ since all the coefficients of $f$  are positive. This  gives that $g'$ is strictly increasing and since $g'(0)=f'(0)-1>0$, $g'(x)>0$ for each $x>0$. In other words, $g$ is strictly increasing in $(0, \infty)$. As $g(0)=0$, we have $f(x)>x$ for all $x>0$. In particular, there is no fixed point of $f$ in the positive real axis. Further, it follows that $\{f^n(x)\}_{n>0} $ is a strictly increasing sequence for $x>0$. This can not be bounded above, since that would imply its convergence and the limit point has to be a positive fixed point of $f$.  Therefore  $\lim\limits_{n \to \infty} f^n (x) = +\infty$ for each $x>0$. This means that $(0,\infty)$ is in the basin of $\infty$ and hence is contained in the Fatou set of $f$.
				
		\item 	If   $f(x_1) < x_1$ and $f(x_2) > x_2$ for two distinct negative real numbers $x_1, x_2$  then  by the Intermediate Value Theorem, there is a point $x^* <0 $ such that $f(x^*) =x^*$.  However, this is not true by our assumption. Therefore,  $f(x)>x$ for all $x<0$ or $f(x)<x$ for all $x<0$.
		
	\par 
	Since $g'(0)>0$ and $g'$ is continuous, there is a $\delta>0$ such that $g'(x)>0$ for all $x \in (-\delta, \delta)$. This means that $g$ is strictly increasing in $(-\delta, \delta)$. As $g(0)=0$, $f(x)<x$  for all $x \in (-\delta, 0)$. It follows from the conclusion of the previous paragraph that $f(x)<x$ for all $x<0$.
Now $\{f^n(x)\}_{n>0} $ is a strictly decreasing sequence and therefore $\lim\limits_{n \to \infty} f^n (x) = -\infty$ as there is no negative fixed point for $f$.
	\par It is shown that all non-zero real numbers escape under the iteration of $f$. Since all the critical points of $f$ are non-zero and  real, their forward orbits are unbounded. The Julia set of $f$ is totally disconnected by Lemma~\ref{connected-Juliaset}(2).
	\end{enumerate}
 
\end{proof}
That the  derivative of a polynomial on its filled-in Julia set is related to the connectedness of the Julia set follows from  a nice result of Buff \cite{Buff-2002}.
\begin{lemma}	If $f_d$ is a polynomial of degree $d \geq 2$ with connected filled-in Julia set  $K(f_d)$ then  $|f_{d}'(z)| \leq  d^2$ for all  $z \in K(f_d)$. Equality holds when the Julia set of $f_d$ is a line segment.\label{Buff-2003}
\end{lemma}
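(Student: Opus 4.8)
\emph{Proof strategy.} The plan is to reduce the bound on $K(f_d)$ to a bound on the Julia set, move to the B\"ottcher coordinate, and turn the statement into one estimate for a univalent map. First, $f_d'$ is a polynomial and $K(f_d)$ is compact, so by the maximum modulus principle $\max_{K(f_d)}|f_d'| = \max_{\partial K(f_d)}|f_d'| = \max_{\mathcal J(f_d)}|f_d'|$; thus it suffices to control $|f_d'|$ on $\mathcal J(f_d)$. Because $K(f_d)$ is connected, all critical points of $f_d$ lie in $K(f_d)$, so $A(\infty)=\widehat{\mathbb C}\setminus K(f_d)$ is simply connected and the B\"ottcher coordinate extends to a conformal isomorphism $\psi\colon\{|w|>1\}\to A(\infty)$ with $\psi(\infty)=\infty$ and $f_d(\psi(w))=\psi(w^d)$; after an affine conjugacy we may assume $\psi(w)=w+O(1)$ near $\infty$, and $\psi'$ is nowhere zero because $\psi$ is conformal.

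Next, differentiating $f_d(\psi(w))=\psi(w^d)$ gives $f_d'(\psi(w))\,\psi'(w)=d\,w^{d-1}\psi'(w^d)$, so $f_d'(\psi(w))=w^{d-1}H(w)$ with $H(w):=d\,\psi'(w^d)/\psi'(w)$ holomorphic and zero-free on $\{|w|>1\}$ and $H(w)\to d$ as $w\to\infty$. Since every point of $\mathcal J(f_d)=\partial A(\infty)$ is a limit $\psi(w_n)$ with $|w_n|\to 1^+$, along which $|f_d'(\psi(w_n))|=|w_n|^{d-1}|H(w_n)|$, and conversely every boundary value of $|H|$ is attained as $|f_d'|$ at a point of $\mathcal J(f_d)$, the maximum modulus principle applied to $H$ on the disc $\{|w|>1\}\cup\{\infty\}$ gives
\[
\max_{\mathcal J(f_d)}|f_d'| \;=\; \sup_{|w|>1}|H(w)| \;=\; d\cdot\sup_{|w|>1}\frac{|\psi'(w^d)|}{|\psi'(w)|}.
\]
Hence the theorem is \emph{equivalent} to the inequality $\sup_{|w|>1}|\psi'(w^d)|/|\psi'(w)|\le d$.

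Proving this last inequality is the heart of the matter and the step I expect to be the main obstacle. It is false for an arbitrary univalent $\psi$ on $\{|w|>1\}$: for instance $\psi(w)=w-1/w$ is univalent but, for even $d$, is not the B\"ottcher coordinate of any polynomial, and there $|\psi'(w^d)|/|\psi'(w)|$ is unbounded near $|w|=1$. So any proof must genuinely use that $\psi$ conjugates $w\mapsto w^d$ to a \emph{polynomial} of degree $d$ — equivalently, that the Laurent expansion of $\psi(w^d)=f_d(\psi(w))$ at $\infty$ contains only the powers $w^{d},w^{0},w^{-d},w^{-2d},\dots$, which forces the cancellation of every intermediate Laurent coefficient of $f_d(\psi(w))$. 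Concretely I would put $u=1/w$, introduce the normalised univalent function $\Theta(u)=1/\psi(1/u)$ in the class $S$, and rewrite $|\psi'(w^d)|/|\psi'(w)|$ as $|q(u^d)|/|q(u)|$ with $q(u)=u^2\Theta'(u)/\Theta(u)^2$ holomorphic, zero-free and $q(0)=1$; then I would combine the classical growth and distortion inequalities for $S$ with the rigidity carried by the polynomial conjugacy, with the Koebe-type map — equivalently the Chebyshev polynomial — expected to be extremal. An alternative packaging works with the Green's function $G$ of $K(f_d)$: from $G\circ f_d=dG$ one gets that the equilibrium measure $\mu$ of $K(f_d)$ satisfies $\int_{\mathcal J(f_d)}\log|f_d'|\,d\mu=\log d$, and the remaining task is to upgrade this averaged identity to the pointwise bound $\log|f_d'|\le 2\log d$ on $\mathcal J(f_d)$ using the potential-theoretic relation between the critical points of $f_d$ and $\mu$; this isolates the difficulty cleanly but does not, I think, remove it.

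Finally, the equality clause is a direct computation. If $\mathcal J(f_d)$ is a line segment then $f_d$ is affinely conjugate to $\pm 2\,T_d(\cdot/2)$ with $K(f_d)=[-2,2]$, where $T_d$ is the degree-$d$ Chebyshev polynomial; affine conjugacy changes neither $\sup_{K}|f'|$ nor $d$, and $|f_d'(z)|=|T_d'(z/2)|$ on $[-2,2]$, so $\max_{K(f_d)}|f_d'|=\max_{[-1,1]}|T_d'|=d^2$ by Markov's inequality, the maximum being attained at the endpoints $z=\pm 2$ where $|T_d'(\pm1)|=d^2$.
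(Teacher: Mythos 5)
Your write-up is candid that it is a strategy rather than a proof, and that is exactly where it stands: after the (correct) reductions, the entire content of the lemma is concentrated in the inequality $\sup_{|w|>1}\,|\psi'(w^d)|/|\psi'(w)|\le d$ for the inverse B\"ottcher coordinate $\psi$, and you leave that step unproved. The tools you propose for it are not adequate as stated: your own Joukowski-type example already shows that univalence plus the classical growth/distortion estimates for $S$ or $\Sigma$ cannot give the bound, and the averaged identity $\int_{\mathcal J(f_d)}\log|f_d'|\,d\mu=\log d$ for the equilibrium measure is strictly weaker than the pointwise bound you need, with no indicated mechanism for upgrading it. For comparison, the paper does not prove this lemma at all: it is quoted as a known result of Buff \cite{Buff-2002}, whose approach (as the title of that paper indicates) goes through the Bieberbach conjecture, i.e.\ the sharp de Branges coefficient inequalities for univalent maps, applied to a suitably normalized map built from the B\"ottcher/linearizing data together with the functional equation $f_d\circ\psi(w)=\psi(w^d)$. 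That sharp coefficient input is precisely the ingredient your sketch identifies as missing but does not supply, so as a self-contained argument the proposal has a genuine gap at its core.

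The parts you do carry out are fine: passing from $K(f_d)$ to $\mathcal J(f_d)$ by the maximum principle, the identity $f_d'(\psi(w))=d\,w^{d-1}\,\psi'(w^d)/\psi'(w)$ and the resulting reformulation via $H(w)=d\,\psi'(w^d)/\psi'(w)$, and the equality clause, where a polynomial whose Julia set is a segment is affinely conjugate to $\pm$ a Chebyshev polynomial and $\max_{[-1,1]}|T_d'|=|T_d'(\pm1)|=d^2$. But these steps only reduce the lemma to the hard inequality; they do not prove it, so the proposal cannot be accepted as a proof of the statement.
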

If $0$ is a  fixed point of $f_d$ then it is in $K(f_d)$. The Julia set of $f_d$ is disconnected whenever $|f_d '(0)| > d^2$. These are going to be used for studying the Julia set of $P_G (z)=a_1 z+a_2 z^2 +a_3 z^3$, and it is seen that there are only finitely many situations where $\mathcal{J}(P_G )$ is connected.

We need the following from ~\cite{Pom 1975} in order to get the location and size of connected Julia sets of reduced independence polynomials. A polynomial is called monic or centered if its leading coefficient is $1$ or its second leading coefficient  is $0$ respectively.
\begin{lemma}
	If  $f$ is a monic and centered polynomial with degree at least two and $K(f)$ is connected then  $K(f) \subset \{z: |z| \leq 2 \} $.
	\label{size-JS}
\end{lemma}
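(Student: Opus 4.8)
The plan is to pass to B\"ottcher coordinates and then invoke the classical area theorem for univalent functions on the exterior of the unit disk. Since $K(f)$ is connected, its complement $\Omega = \widehat{\mathbb{C}} \setminus K(f)$ (the basin of $\infty$ together with $\infty$) is a simply connected domain on the sphere, and B\"ottcher's theorem furnishes a conformal isomorphism $\psi \colon \{\, w : |w| > 1 \,\} \cup \{\infty\} \to \Omega$ with $\psi(\infty) = \infty$, conjugating $f$ to $w \mapsto w^{d}$, i.e. $\psi(w^{d}) = f(\psi(w))$. Near $\infty$ it has a Laurent expansion $\psi(w) = b_{-1} w + b_{0} + b_{1} w^{-1} + b_{2} w^{-2} + \cdots$. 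First I would read off from the relation $\psi(w^{d}) = f(\psi(w))$ that $b_{-1} = 1$, because $f$ is monic, and that $b_{0} = 0$, because $f$ is centered: the left side $\psi(w^{d}) = w^{d} + b_{0} + b_{1} w^{-d} + \cdots$ has no term of order $w^{d-1}$, whereas the coefficient of $w^{d-1}$ in $f(\psi(w)) = \psi(w)^{d} + a_{d-2}\psi(w)^{d-2} + \cdots$ equals $d\, b_{0}$; hence $b_{0} = 0$. Thus $\psi$ restricted to $\{|w| > 1\}$ is univalent of the form $\psi(w) = w + b_{1}w^{-1} + b_{2}w^{-2} + \cdots$, and the set it omits in $\mathbb{C}$ is exactly $K(f)$.

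It then remains to show that any univalent $g$ on $\{|w| > 1\}$ of the form $g(w) = w + c_{0} + c_{1}w^{-1} + \cdots$ omits only points of the closed disk $\{\, z : |z - c_{0}| \le 2 \,\}$; applied to $\psi$ with $c_{0} = 0$ this gives $K(f) \subset \{|z| \le 2\}$. For this I would use a square-root transformation together with the area theorem. Fix $\zeta$ omitted by $\psi$, so $\psi(w^{2}) - \zeta$ is non-vanishing on $\{|w| > 1\}$; since it winds twice around $0$ on each circle $|w| = r > 1$ (like $w^{2}$) and has no zeros there, it admits a single-valued holomorphic square root $g(w) = \sqrt{\psi(w^{2}) - \zeta}$, and choosing the branch with $g(w)/w \to 1$ at $\infty$ one gets $g(w) = w - \frac{\zeta}{2}\,w^{-1} + O(w^{-3})$. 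A short check shows $g$ is univalent: $g(w_{1}) = g(w_{2})$ forces $\psi(w_{1}^{2}) = \psi(w_{2}^{2})$, hence $w_{1} = \pm w_{2}$ by univalence of $\psi$, and $g$ is odd, so $w_{1} = -w_{2}$ would force $g(w_{2}) = 0$, which is impossible. The area theorem applied to $g$ yields $|\zeta/2|^{2} \le 1$, i.e. $|\zeta| \le 2$, as desired.

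The main obstacle is purely technical: carrying out the two Laurent-coefficient computations cleanly (monic $\Rightarrow b_{-1} = 1$, centered $\Rightarrow b_{0} = 0$) and, above all, justifying that $g(w) = \sqrt{\psi(w^{2}) - \zeta}$ is single-valued and univalent; once that is settled the area theorem finishes the argument immediately. Since this lemma is exactly the statement extracted from \cite{Pom 1975}, one may alternatively just cite it there, but the sketch above is the natural self-contained route and it makes transparent why both hypotheses are needed --- ``monic'' is what produces the radius $2$ (rather than twice the capacity of $K(f)$), and ``centered'' is what places the bounding disk at the origin.
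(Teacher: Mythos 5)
Your argument is correct, and it is in fact the classical proof of this estimate; the paper itself gives no proof at all, simply quoting the statement from Pommerenke's book, so your sketch supplies exactly what the citation hides. The route (Böttcher coordinate $\psi$ on the complement of the connected set $K(f)$, normalization of the two leading Laurent coefficients from the functional equation $\psi(w^{d})=f(\psi(w))$, then the square-root transform plus Gronwall's area theorem to bound omitted values of a class-$\Sigma$ map by $|\zeta-c_{0}|\le 2$) is the standard one, and all the steps you flag as technical do go through: the winding-number argument makes $\sqrt{\psi(w^{2})-\zeta}$ single-valued, oddness plus injectivity of $\psi$ gives univalence, and the area theorem yields $|\zeta|\le 2$. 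One small point deserves more care than your phrasing suggests: comparing leading coefficients in $\psi(w^{d})=f(\psi(w))$ with $f$ monic only gives $b_{-1}^{\,d-1}=1$, not $b_{-1}=1$; the Böttcher isomorphism is unique only up to composition with $w\mapsto\lambda w$, $\lambda^{d-1}=1$, so you should either fix that freedom to make $b_{-1}=1$, or observe that $|b_{-1}|=1$ already suffices after a rotation of the variable (the omitted-value bound is rotation-invariant). The coefficient-of-$w^{d-1}$ computation showing centeredness forces $b_{0}=0$ is fine, and your closing remark correctly isolates what each hypothesis buys: monicity pins the capacity (radius $2$ rather than $2\,\mathrm{cap}(K)$), centeredness pins the center of the bounding disk at the origin.
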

Applying Lemma ~\ref{size-JS} to a reduced independence polynomial, we get the following. We use $P$ instead of $P_G$ here and elsewhere for the sake of notational simplicity.
\begin{lemma}\label{filled-Julia-estimate}
Let  $P(z)=a_1 z+ a_2 z^2 +\cdots + a_{d-1} z^{d-1}+a_d z^d , d \geq 2$ be a reduced independence polynomial with connected $K(P)$. Then   $K(P) \subset \{z: |z +\frac{a_{d-1}}{d a_d}| \leq 2 \left(\frac{1}{a_d}\right)^{\frac{1}{d-1}}\}$. In particular, if $P$ is cubic then $K(P) \subset \{z: |z +\frac{a_{2}}{3 a_3}| \leq  \frac{2}{\sqrt{a_3}}\}$.
\end{lemma}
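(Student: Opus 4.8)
The plan is to reduce the statement to Lemma~\ref{size-JS} by an affine change of coordinates. Recall that if $\phi(w) = \alpha w + \beta$ is an affine map with $\alpha \neq 0$ and $Q = \phi^{-1} \circ P \circ \phi$, then $Q^n = \phi^{-1} \circ P^n \circ \phi$ for every $n$, so $\phi$ carries the forward orbit of $w$ under $Q$ to the forward orbit of $\phi(w)$ under $P$. Hence $K(Q) = \phi^{-1}(K(P))$, and since $\phi$ is a homeomorphism of $\mathbb{C}$, $K(Q)$ is connected if and only if $K(P)$ is connected. The idea is to choose $\alpha$ and $\beta$ so that $Q$ is monic and centered, apply Lemma~\ref{size-JS} to conclude $K(Q) \subset \{w : |w| \leq 2\}$, and then push this estimate back through $\phi$.

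To normalize $Q$, I would extract $\alpha$ and $\beta$ in that order. The leading coefficient of $Q$ is $a_d\, \alpha^{d-1}$, so I take $\alpha = (1/a_d)^{1/(d-1)}$, the positive real $(d-1)$-th root (which exists since $a_d > 0$); this makes $Q$ monic. A short binomial-theorem computation then shows that the coefficient of $w^{d-1}$ in $Q$ equals $\alpha^{d-2}\bigl(d\, a_d\, \beta + a_{d-1}\bigr)$, which vanishes precisely when $\beta = -\,a_{d-1}/(d\, a_d)$; this makes $Q$ centered. With these choices $Q$ is a monic centered polynomial of degree $d \geq 2$, so Lemma~\ref{size-JS} applies and gives $K(Q) \subset \{w : |w| \leq 2\}$.

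Finally, since $K(P) = \phi(K(Q))$ and $\phi$ maps the closed disk $\{|w| \leq 2\}$ onto the closed disk of radius $2|\alpha| = 2(1/a_d)^{1/(d-1)}$ centered at $\beta = -\,a_{d-1}/(d\, a_d)$, we obtain
\[
K(P) \subset \left\{ z : \left| z + \frac{a_{d-1}}{d\, a_d} \right| \leq 2\left(\frac{1}{a_d}\right)^{1/(d-1)} \right\}.
\]
Specializing to $d = 3$ gives $2(1/a_3)^{1/2} = 2/\sqrt{a_3}$ and $\beta = -\,a_2/(3a_3)$, which is the stated cubic conclusion. The computations here are entirely elementary; the only point that needs a moment's care is that a single affine map can simultaneously normalize the top two coefficients, and this is precisely why $\alpha$ is read off from the leading coefficient first and $\beta$ from the next one. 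I do not expect any substantive obstacle beyond this bookkeeping.
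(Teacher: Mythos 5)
Your proposal is correct and follows essentially the same route as the paper: conjugating $P$ by the affine map $\phi(z)=\alpha z - \frac{a_{d-1}}{d a_d}$ with $\alpha$ a $(d-1)$-th root of $\frac{1}{a_d}$ to obtain a monic centered polynomial, applying Lemma~\ref{size-JS}, and transporting the disk $\{|w|\leq 2\}$ back through $\phi$. The only difference is cosmetic: the paper cites an external lemma for the normalization that you verify by a direct binomial computation.
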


\begin{proof}
	Let $\phi(z)= az -\frac{a_{d-1}}{d a_d}$ where $a$ is a $(d-1)$-th root of $\frac{1}{a_d}$. Then it can be seen that (for example see Lemma 2.2, \cite{rotational-symmetry-2024}) $\phi^{-1} \circ P\circ \phi$ is a monic and centered polynomial. Since $K(P)= \phi (K(\phi^{-1} \circ P\circ \phi ))$ and $K(\phi^{-1} \circ P\circ \phi ) \subset \{z: |z| \leq 2\}$, we have that $K(P)$ is contained in the image of $\{z:|z| \leq 2\}$ under $\phi$, which is nothing but  $ \{z: |z +\frac{a_{d-1}}{d a_d}|  \leq 2 \left(\frac{1}{a_d}\right)^{\frac{1}{d-1}}\}$. 
	\par If $P$ is cubic then $d=3$ and $K(P)\subset \{z\in\mathbb{C}:|z+\frac{a_2}{3a_3}|\leq\frac{2}{\sqrt{a_3}}\}$. \end{proof}
For using later, we make a definition.
	\begin{definition}(Critical disk)
	The critical disk for  a cubic polynomial $P(z)= a_1 z +a_2 z^2 +a_3 z^3$, denoted by $D_P$ is defined as $\{z: |z+\frac{a_2}{3a_3}| \leq  \frac{2}{\sqrt{a_3}}\}$. 
	\label{criticaldisk}
\end{definition}
Every point outside the critical disk of a polynomial with the connected filled-in Julia set belongs to the basin of $\infty$.
 
Now, we look for possible cubic reduced independence polynomials.
\begin{lemma} If  $ P(z)= a_1 z+a_2 z^2+  a_3 z^3$ is a cubic reduced independence polynomial, then $ 3 \leq a_2 \leq \frac{a_1 (a_1 -1)}{2}$ and $1 \leq a_3 \leq \frac{a_1 (a_1 -1)(a_1 -2)}{6}$.
	Further, if $a_1 =3$ then $(a_3,a_2) =(1, 3)$, and $P$ has only one critical point. 
 \label{basic-obs}
\end{lemma}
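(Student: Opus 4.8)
\textbf{Proof proposal for Lemma~\ref{basic-obs}.}

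The plan is to interpret each coefficient $a_i$ combinatorially and bound it by the corresponding extremal count, then handle the $a_1 = 3$ case separately. First I would recall that $a_2$ counts the $2$-independent sets of $G$. On one hand, since the independence number is three, there is at least one $3$-independent set, and any two of its three vertices form a $2$-independent set, so $a_2 \geq 3$. On the other hand, a $2$-independent set is in particular a $2$-element subset of $V(G)$, and $|V(G)| = a_1$, so $a_2 \leq \binom{a_1}{2} = \frac{a_1(a_1-1)}{2}$. The same reasoning applies to $a_3$: it counts $3$-independent sets, there is at least one so $a_3 \geq 1$, and every $3$-independent set is a $3$-element subset of $V(G)$, giving $a_3 \leq \binom{a_1}{3} = \frac{a_1(a_1-1)(a_1-2)}{6}$.

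Next I would treat the case $a_1 = 3$. Here $|V(G)| = 3$, and since the independence number is three, the (unique) $3$-element subset $V(G)$ itself must be independent, forcing $a_3 = 1$ and also $a_2 = \binom{3}{2} = 3$ (all three pairs are independent, being subsets of an independent set). So $(a_3, a_2) = (1,3)$ and $P(z) = 3z + 3z^2 + z^3$. Then $P'(z) = 3 + 6z + 3z^2 = 3(z+1)^2$, which has the single (double) critical point $z = -1$; this is the ``only one critical point'' assertion, and it also matches the earlier description of this graph as unicritical.

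I do not expect any real obstacle here: the bounds are just the trivial combinatorial extremes $\binom{a_1}{i}$ together with the existence of one $3$-independent set, and the $a_1 = 3$ analysis is forced because a three-vertex graph with independence number three is the empty graph $\overline{K_3}$. If anything, the only point requiring a line of care is justifying $a_2 \geq 3$ from the existence of a single $3$-independent set (one must note its three $2$-element subsets are pairwise non-adjacent, hence independent), and observing that the formula $P'(z) = 3(z+1)^2$ indeed gives a repeated root rather than two distinct ones.
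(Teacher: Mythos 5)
Your proposal is correct and follows essentially the same route as the paper: the bounds come from the trivial combinatorial counts $\binom{a_1}{2}$ and $\binom{a_1}{3}$ together with $a_3\geq 1$ forcing $a_2\geq 3$, and the $a_1=3$ case reduces to the empty graph on three vertices with $P'(z)=3(z+1)^2$ giving the single critical point $-1$. You merely spell out the details that the paper's one-line proof leaves implicit.
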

\begin{proof}
As  $a_3 \geq 1$, we have $a_2 \geq 3$. By definition, $a_2 \leq \frac{a_1 (a_1 -1)}{2} $ and $1 \leq a_3 \leq \frac{a_1 (a_1 -1)(a_1 -2)}{6}$. If $a_1 =3$ then $(a_3,a_2) =(1, 3)$ and $-1$ is the only critical point of $P$. 
\end{proof}
	
\begin{lemma}
	Let $P(z)=a_1z+a_2z^2+a_3z^3$ be the reduced independence polynomial. Then the following statements hold.
	\begin{enumerate}
		\item  For $a_1=4,$ $a_2<6$. Further,   $(a_2, a_3) \in \{(3,1), (4,1), (5,2)\}$.
		\item  For $a_1=5,$ $a_2<9.$  Further, if $a_2=8$, then $a_3=4$.
		\item  For $a_1=6$, $a_2<13$.  Further, if  $a_2=12$, then $a_3=8$.
		\item  For $a_1=7,$ $a_2<17.$  Further, if  $a_2=16$ then $a_3=12$.
		\item  For $a_1=8,$ $a_2<22.$  Further, if  $a_2=21$ or $20$  then   $a_3=18$ or $a_3 \in \{15,16\}$ respectively. 
	\end{enumerate}
	\label{basic-independentpoly}
\end{lemma}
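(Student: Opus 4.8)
\textbf{Proof proposal for Lemma~\ref{basic-independentpoly}.}
The plan is to rephrase everything in terms of the complement graph and then invoke Tur\'an-type extremal graph theory. If $G$ has $I_G(z)=1+a_1z+a_2z^2+a_3z^3$, put $H=\overline{G}$. Then $|V(H)|=a_1$, the number $a_2$ of $2$-independent sets of $G$ equals the number of edges $e(H)$, and the number $a_3$ of $3$-independent sets of $G$ equals the number of triangles $t(H)$. Moreover, $G$ has independence number exactly $3$ precisely when $H$ is $K_4$-free and contains a triangle, that is, $\omega(H)=3$. So the lemma is a statement about the possible pairs $(e(H),t(H))$ for $K_4$-free graphs $H$ with $\omega(H)=3$ on $4,5,6,7$ and $8$ vertices.

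First I would bound $a_2=e(H)$ by Tur\'an's theorem: a $K_4$-free graph on $n$ vertices has at most $e(T(n,3))$ edges, with equality only for the balanced complete tripartite graph $T(n,3)$. Since $e(T(n,3))$ equals $5,8,12,16,21$ for $n=4,5,6,7,8$, this yields the asserted bounds $a_2<6,9,13,17,22$. When $a_2$ attains the maximum, the uniqueness in Tur\'an's theorem forces $H\cong K_{n_1,n_2,n_3}$ with $(n_1,n_2,n_3)$ the nearly equal partition of $n$, whence $a_3=t(H)=n_1n_2n_3$; this gives $a_3=4,8,12,18$ for $n=5,6,7,8$, exactly the values claimed for $a_2$ maximal. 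For $n=4$ the full list in part (1) I would obtain by direct enumeration of the $K_4$-free graphs on four vertices that contain a triangle: these are $K_3\cup K_1$, the triangle with one pendant edge, and $K_{2,1,1}$, giving $(a_2,a_3)\in\{(3,1),(4,1),(5,2)\}$.

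The one case that does not reduce immediately to Tur\'an's theorem is $a_1=8$, $a_2=20$, that is, one edge short of the extremal number. Here I would classify the $K_4$-free graphs $H$ on $8$ vertices with $e(H)=20$. Using Zykov symmetrization (replacing a vertex's neighborhood by that of a non-adjacent vertex of at least as large degree preserves $K_4$-freeness and does not decrease the number of edges), $H$ can be transformed into a complete multipartite graph with at most three parts and at least $20$ edges; since $K_{4,4}$ has only $16$ edges this target must be tripartite, and the only complete tripartite graphs on $8$ vertices with at least $20$ edges are $K_{4,2,2}$ (with $20$ edges) and $K_{3,3,2}$ (with $21$ edges). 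Tracking the at most one edge that can be added during the symmetrization, $H$ is then either $K_{4,2,2}$, or $K_{3,3,2}$ with a single edge removed. A one-line triangle count finishes the case: $t(K_{4,2,2})=4\cdot2\cdot2=16$; removing an edge of $K_{3,3,2}$ joining the two parts of size $3$ destroys $2$ triangles (leaving $16$), while removing an edge joining a size-$3$ part and the size-$2$ part destroys $3$ triangles (leaving $15$). Hence $a_3\in\{15,16\}$.

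I expect the main obstacle to be precisely this last case: the symmetrization bookkeeping must be made airtight, so that a $K_4$-free graph on $8$ vertices with $20$ edges is really shown to be either $K_{4,2,2}$ or $K_{3,3,2}$ minus an edge, and not some other graph that merely symmetrizes into $K_{3,3,2}$. If the symmetrization argument becomes awkward, the alternative is a direct analysis of the feasible degree sequences of $H$, using repeatedly that the neighborhood of each vertex spans a triangle-free subgraph (so, by Mantel's theorem, contains few edges), which pins down the structure of $H$ from below. Everything else in the lemma follows cleanly once the reduction to $K_4$-free complements and Tur\'an's theorem is in place.
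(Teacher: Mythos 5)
Your reduction to the complement $H=\overline{G}$ and the use of Tur\'an's theorem with its uniqueness statement is a genuinely different (and for the extremal values, cleaner) route than the paper, which bounds $a_2$ by Tur\'an but then determines $a_3$ by a lengthy case analysis of $G$ itself (components, pendants, etc.). Your handling of $a_1=4$ by direct enumeration and of the cases $a_2=8,12,16,21$ via ``equality in Tur\'an forces $H\cong T(a_1,3)$, hence $a_3=n_1n_2n_3$'' is correct and would shorten the paper's argument considerably.

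However, the sub-case $a_1=8$, $a_2=20$ -- exactly the one you flagged as the main obstacle -- has a genuine gap: the classification you rely on is false. It is not true that every $K_4$-free graph on $8$ vertices with $20$ edges is $K_{4,2,2}$ or $K_{3,3,2}$ minus an edge. Take $H$ to be the join of an independent set on three vertices with a $5$-cycle, i.e.\ $H=\overline{K_3\sqcup C_5}$. This $H$ is $K_4$-free (a $K_4$ could use at most one vertex of the independent triple and would need a triangle inside $C_5$), has $5+3\cdot 5=20$ edges, and is $5$-regular, whereas $K_{4,2,2}$ has degrees $4$ and $6$ and $K_{3,3,2}$ minus an edge is not regular either; so $H$ is isomorphic to neither. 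The underlying graph $G=K_3\sqcup C_5$ does have independence number $3$ with $(a_1,a_2,a_3)=(8,20,15)$, so this is not a vacuous example: it lies squarely in the case you are treating, and your argument simply does not cover it (the conclusion $a_3\in\{15,16\}$ happens to survive, since $t(H)=3\cdot 5=15$, but that is not established by your proof). The flaw is in ``tracking the at most one edge that can be added during the symmetrization'': Zykov symmetrization rewires the graph rather than merely adding edges, so a graph one edge below the Tur\'an number need not be one edge away from a complete multipartite graph -- the example above symmetrizes to a Tur\'an-type graph while being structurally quite different. To repair this case you would need either the paper's route (analyze $G$ directly: $8$ vertices, $8$ edges, no $4$-independent set, which forces $G$ to be $K_2\sqcup C$ or $K_3\sqcup C'$ with the complement of the large component triangle-free, or $K_2\sqcup K_2\sqcup K_4$, giving $a_3=16,15,16$ respectively), or some counting argument for near-extremal $K_4$-free graphs that does not presuppose a structural classification of $H$.
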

\begin{proof}
	Turan's theorem states that every graph with $n$ vertices not containing a complete graph on $r+1$ ($r+1 \leq n$) vertices as a subgraph has at most as many edges as the Turan graph $T(n,r)$ (\cite{wiki-Turan}). Applying this to the complement of the graph with independence polynomial $P(z)=a_1 z +a_2 z^2 +a_3 z^3$, we have  $a_2 \leq \frac{a_1 ^2}{3}$ for  $a_1 \geq 4$. Therefore, for $a_1=4,5,6,7$ and $8$, we have $a_2<6,9,13,17$ and $22$ respectively.
	 
	\begin{enumerate}
		\item 
		For $a_1=4,$ we have $ a_2 < 6$. If $a_2=3$, then the graph has $3$ edges, giving $a_3=1$. If $a_2=4$, then the graph has $2$ edges and these edges have a common vertex of incidence, i.e., $a_3=1$. If $a_2=5$, then the graph has only $1$ edge, giving $a_3=2$.
		
		\item  For $a_1=5,$ we have $ a_2 < 9$. If $a_2=8$, then the graph has $2$ edges and these edges are non-adjacent, giving that $a_3=4$. This is because any common vertex of incidence of these $2$ edges leads to a $ 4$-independent set in the graph, which is not possible.
		\item  For $a_1=6,$ we have $ a_2 < 13$.  If $a_2=12$, then  the graph has $3$ edges. Thus it cannot be connected. If the graph has two connected components then the possible vertex distributions for the components are $(1,5), (2,4)$ and $(3,3)$. Each such possibility requires at least $4$ edges, a contradiction. If the graph has $3$ connected components then the possible vertex distributions for the components are $(1,1,4), (1,2,3)$ and $(2,2,2)$. Since each component has to be a complete subgraph (to avoid $4$-independent sets) and the graph has $3$ edges, the only valid distribution is $(2,2,2)$. In this case, $a_3=8$.
		
		\item  For $a_1=7,$ we have $ a_2 < 17$. If $a_2=16$, then  there are  $5$ edges in the graph. Thus it cannot be connected. If it has two connected components then the possible vertex distributions for the components are $(1,6), (2,5)$, and $(3,4)$. For the vertex distribution $(1,6)$, the component with $6$ vertices must have at least one pendant vertex (a vertex with degree one). Since there are only $5$ edges, there is a $3$-independent set in this component containing the pendant. This leads to a $4$-independent set in the original graph. For $(2,5)$, the component with $5$ vertices must have at least two pendant vertices. These pendant vertices along with the vertex of this component that is non-adjacent to both form a $3$-independent set. This leads to a  $4$-independent set in the original graph. For 
	$(3,4)$, the components  containing $4$ vertices must have at least two pendant vertices, and the component containing three vertices must be $P_3$ - the path graph on $3$ vertices. The two non-adjacent vertices of $P_3$ along with two pendant vertices of the other component lead to a $ 4$-independent set in the graph. Now, suppose that the graph has $3$ connected components, the possible vertex distributions for the components are $(1,1,5), (1,2,4), (1,3,3),$ and $(2,2,3)$. Since each component must be a complete graph itself  and the graph has only $5$ edges, $(2,2,3)$ is the only valid distribution, and in this situation, the number of $3$-independent sets is $12$, i.e., $a_3=12$. 
		
		\item  For $a_1=8,$ we have $ a_2 < 22$. 
	 
 	\par Let $a_2=21$. Then the  graph has $7$ edges. If it is connected, then it must have two pendant vertices. The $4$ vertices which are non-adjacent to both of these pendants must constitute a complete graph to avoid any $4$-independent set. But then the number of edges would exceed $7$.  
 		\par  If the graph has two connected components then the possible vertex distributions for the components are $(1,7), (2,6), (3,5)$ and $ (4,4)$. We are going to rule out all these possibilities - the first three  by exhibiting a $3$-independent set in a component of the graph. 
 		\par  For the vertex distribution $(1,7)$, if the component with seven vertices has no pendant then it is the cycle on $7$ vertices, and hence has a $3$-independent set. If the component with $7$ vertices has a pendant vertex then the $5$ vertices other than the pendant and the one adjacent to it must form a complete graph leading to at least $10$ edges in the graph. But there are only $7$ edges.

 		\par For the vertex distribution $(2,6)$, the component with $6$ vertices must be either the cycle or with at least one pendant. In the former case, there is clearly a $3$-independent set. If there is a pendant, then the $4$ vertices different from the pendant and its adjacent vertex must form a complete graph. This leads to at least $7$ edges in this component, which cannot be true as the other component has $1$ edge and the total number of edges in the graph is $7$. 
 		\par For the vertex distribution $(3,5)$, the component with $3$ vertices must be complete as the other component cannot be so. Therefore the other component must have $4$ edges. Consequently, it should have at least two pendant. These pendant vertices constitute a $3$-independent set along with a vertex that is non-adjacent to both - a contradiction.  
 		\par For $(4,4)$, since one component must be complete and the other one is connected, there must be at least $9$ edges,  which  is not possible.
 		\par Now, supposing that the graph has $3$-connected components, we see the possible vertex distributions for the components are $(1,1,6), (1,2,5), (1,3,4), (2,2,4)$ and $(2,3,3)$. Since each component must be complete and the  graph has only $7$ edges,  $(2,3,3)$ is the  only possible distribution of vertices. In this case, the number of $3$-independent sets is $18$, i.e., $a_3=18$. 
 		\par For $a_2=20$, the graph has $8$ edges.  We claim that $a_3$ is either $15$ or $16$.
 		\par If the graph is connected then  it is either the cycle on $8$ vertices or has at least one pendant. It cannot be the cycle on $8$ vertices  as this leads to a $4$-independent set. It is easy to verify that the other situation is also not possible. Indeed, if there is a pendant, say $v_1$, then the six vertices that are non-adjacent to $v_1$ and its neighbor must have at most six edges between them. This gives that either these six vertices form a cycle or one of them, say $v_2$ is a pendant. In the first possibility, there is a $3$-independent set in this $6$-cycle. This $3$-independent set along with $v_1$ constitutes a $4$-independent set. In the other case, there is a complete subgraph containing the four vertices out of the above considered six vertices that are non-adjacent to both $v_2$ and its neighbor. This leads to at least nine edges - a contradiction. 
 		\par If there are two connected components then the possible  distributions of vertices among the components are $(1,7), (2,6), (3,5),$ and $(4,4)$.
 		\par For $(1,7)$, observe that the component having $7$ vertices cannot have a pendant. Further, it is nothing but a cycle on $7$ vertices where two non-adjacent vertices are joined by an edge. This always gives a $3$-independent set within this cycle. This $3$-independent set along with the vertex in the other component gives a $4$-independent set, which is not true.
 		\par For $(2,6)$, the component having $2$ vertices has $1$ edge and the other component has $7$ edges. Therefore the number of $2$-independent sets in the latter is $8$. Hence the number of $3$-independent sets in the graph is $16$, i.e., $a_3=16.$
 	\par For $(3,5)$, the component having $3$ vertices must be complete,  and the other component has $5$ edges. Therefore the number of $2$-independent sets in the latter one is $5$. Hence the number of $3$-independent sets in the graph is $15$, i.e., $a_3=15.$
 		\par For $(4,4)$, since one component must be complete and the other one is connected, there must have at least $9$ edges, which is not possible.
 		\par If there are three connected components then the possible distributions of vertices among the  components  are $(1,1,6), (1,2,5), (1,3,4), (2,2,4)$ and $(2,3,3)$. Since each component must be complete and the graph has $8$ edges, the  only valid distribution is $(2,2,4)$. In this case, the number of $3$-independent sets is $16$, i.e.,  $a_3=16$. 
	\end{enumerate}
\end{proof}

\section{Proofs} 
  \subsection{Proof  of Theorem~\ref{bicritically non-real}  }
  We require three lemmas to prove Theorem~\ref{bicritically non-real}.
 \begin{lemma}
 	Let $P$ be a cubic reduced independence polynomial with two non-real critical points. 
 	\begin{enumerate}
 		\item Then the Julia set of $P$ is either connected or totally disconnected.
 		\item If $D_P$ denotes the critical disk of $P$ (see Definition~\ref{criticaldisk}) and  the forward orbit of a critical point intersects the complement of $D_P$ then the Julia set of $P$ is totally disconnected.
 	\end{enumerate}\label{totally-disconnected}
 \end{lemma}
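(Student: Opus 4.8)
The plan is to reduce both statements to the connected/totally disconnected dichotomy of Lemma~\ref{connected-Juliaset}, exploiting the conjugation symmetry of polynomials with real coefficients established in Lemma~\ref{Juliaset-prop}.

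For part (1): since $P$ has positive (hence real) coefficients and its two critical points are non-real with the same real part $-\frac{a_2}{3a_3}$, they must be complex conjugates of each other; write them as $c$ and $\overline{c}$. By Lemma~\ref{Juliaset-prop}, $P^n(c)\to\infty$ if and only if $P^n(\overline{c})\to\infty$; equivalently, the forward orbit of $c$ is bounded if and only if the forward orbit of $\overline{c}$ is bounded. Hence either both critical orbits are bounded, in which case $\mathcal{J}(P)$ is connected by Lemma~\ref{connected-Juliaset}(1), or both are unbounded, in which case $\mathcal{J}(P)$ is totally disconnected by Lemma~\ref{connected-Juliaset}(2). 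No intermediate possibility occurs, which is exactly the assertion.

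For part (2): I would argue by contradiction. Suppose the forward orbit of a critical point meets the complement of $D_P$ but $\mathcal{J}(P)$ is not totally disconnected; then by part (1) it is connected, so $K(P)$ is connected. On the one hand, Lemma~\ref{connected-Juliaset}(1) then forces the forward orbit of each critical point to be bounded, so both critical points lie in $K(P)$; and since $K(P)$ is forward invariant under $P$ (boundedness of $\{P^n(z)\}$ implies boundedness of $\{P^n(P(z))\}$), the entire forward orbit of each critical point is contained in $K(P)$. On the other hand, Lemma~\ref{filled-Julia-estimate} gives $K(P)\subset D_P$. Combining the two, the forward orbit of every critical point stays inside $D_P$, contradicting the hypothesis. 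Therefore $\mathcal{J}(P)$ must be totally disconnected.

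The only delicate point — not really an obstacle, but the step to get right — is that one must invoke the forward invariance of $K(P)$ rather than just the membership $c\in K(P)$: knowing the critical points themselves lie in $D_P$ is not enough; it is the fact that their whole orbits remain in $K(P)\subset D_P$ that produces the contradiction with the orbit escaping $D_P$. Everything else is a direct citation of the lemmas already available (Lemmas~\ref{connected-Juliaset}, \ref{Juliaset-prop} and \ref{filled-Julia-estimate}).
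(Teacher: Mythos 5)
Your proposal is correct and follows essentially the same route as the paper: part (1) via the conjugate-symmetry of the critical orbits (Lemma~\ref{Juliaset-prop}) combined with the dichotomy of Lemma~\ref{connected-Juliaset}, and part (2) by playing the containment $K(P)\subset D_P$ from Lemma~\ref{filled-Julia-estimate} against the escaping critical orbit and then invoking part (1). The paper phrases part (2) by noting the orbit point outside $D_P$ lies outside $K(P)$ and hence escapes, while you phrase it via forward invariance of $K(P)$; these are equivalent formulations of the same argument.
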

 
 \begin{proof}
 	\begin{enumerate}
 		\item 
 		For a critical point $c$ of $P$, the forward orbit $\{P^n(c): n \geq 1\}$ of $c$ is bounded if and only if $\{P^n(\overline{c}): n \geq 1\}$ is bounded (by Lemma~\ref{Juliaset-prop}). Note that  $\overline{c}$ is the other critical point of $P$ as all the coefficients of $P'$ are real. If both the critical points remain bounded under the iteration of $P$ then by Lemma~\ref{connected-Juliaset}(1), the Julia set of $P$ is connected. Otherwise both the forward orbits of the critical points are unbounded and therefore the Julia set is totally disconnected by Lemma~\ref{connected-Juliaset}(2). 
 		\item 
 		
 		If the Julia set of $P$ is connected then $K(P) \subset D_P$, by Lemma~\ref{filled-Julia-estimate}. By the hypothesis of this lemma,  the forward orbit of a critical point is in the complement of $D_P$ and hence it is unbounded. It follows from Lemma~\ref{connected-Juliaset}(1) that the Julia set of $P$ is disconnected. 
 		The Julia set is totally disconnected by (1) of this lemma.
 	\end{enumerate}
 \end{proof}
 
 \begin{lemma}
 	Let  $P(z)=a_1 z +a_2 z^2 +a_3 z^3$  be a reduced independence polynomial and  $a_2 ^2 < 3 a_1 a_3$. Then the followings are true.
 	\begin{enumerate}
 		\item If  $c = \frac{-a_2 + i \sqrt{3a_1a_3-a_2^2}}{3a_3}$ is the critical point of $P$ with positive imaginary part  then $|P(c)+\frac{a_2}{3a_3}| > \frac{2}{\sqrt{a_3}}$ if and only if $$\left|2 a_2 ^2 - \frac{3 a_3 (a_1 ^2 +6 a_1 -3)}{4} \right| > a_3 \sqrt{\left(\frac{3 (a_1 ^2 +6 a_1 -3)}{4} \right)^2 -12 a_1 ^3 +324}.$$
 		\item  $a_1 ^3 -6 a_1 ^2 +9 a_1 -108 > 0$ if and only if $$6  a_1 < \frac{3}{4} (a_1 ^2 +6 a_1 -3)-\sqrt{\left(\frac{3 (a_1 ^2 +6 a_1 -3)}{4} \right)^2 -12 a_1 ^3 +324}.$$ 
 	\end{enumerate}	 
 	Here the positive square root is considered.
 	\label{estimate-1}
 \end{lemma}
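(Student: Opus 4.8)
The plan is to treat both parts as algebraic identities disguised as biconditionals and to establish each by direct computation.

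For part (1), I would exploit that $c$ is a critical point: from $3a_3 c^2 + 2a_2 c + a_1 = 0$ one eliminates $c^2$, and then $c^3$, from $P(c) = a_1 c + a_2 c^2 + a_3 c^3$, reducing it to the linear expression $P(c) = \dfrac{2(3a_1a_3 - a_2^2)\,c - a_1 a_2}{9a_3}$. Substituting $c = \dfrac{-a_2 + i\sqrt{3a_1a_3 - a_2^2}}{3a_3}$ (real part negative, imaginary part positive since $a_2^2 < 3a_1a_3$) and adding $\frac{a_2}{3a_3}$ gives
$$P(c) + \frac{a_2}{3a_3} = \frac{a_2\bigl(2a_2^2 - 9a_3(a_1-1)\bigr) + 2i\,(3a_1a_3 - a_2^2)^{3/2}}{27 a_3^2}.$$
Squaring the modulus and clearing the denominator $729 a_3^4$, the inequality $\bigl|P(c)+\frac{a_2}{3a_3}\bigr| > \frac{2}{\sqrt{a_3}}$ becomes
$$a_2^2\bigl(2a_2^2 - 9a_3(a_1-1)\bigr)^2 + 4\,(3a_1a_3 - a_2^2)^3 > 2916\, a_3^3.$$
Writing $t = a_2^2$, the left-hand side is a polynomial in $t$ whose $t^3$-terms cancel, leaving $36a_3 t^2 - 27a_3^2(a_1^2 + 6a_1 - 3)\,t + 108 a_1^3 a_3^3$; dividing by $36a_3$, completing the square in $t$, and multiplying by $4$ converts the inequality into exactly the stated one. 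The only non-mechanical point is that the final square-root step is a genuine equivalence, i.e. that the radicand $\bigl(\tfrac{3(a_1^2+6a_1-3)}{4}\bigr)^2 - 12a_1^3 + 324$ is nonnegative for every $a_1 \geq 3$; this is immediate once one writes it as $\tfrac{1}{16}\bigl(9a_1^4 - 84a_1^3 + 270a_1^2 - 324a_1 + 5265\bigr)$, whose minimum over the relevant range is visibly positive.

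For part (2), set $A = \frac{3(a_1^2+6a_1-3)}{4}$ and $R = \sqrt{A^2 - 12a_1^3 + 324}$, so the right-hand inequality reads $R < A - 6a_1$. A one-line simplification gives $A - 6a_1 = \frac{3(a_1-3)(a_1+1)}{4}$, which is $\geq 0$ for $a_1 \geq 3$; since also $R \geq 0$, the inequality $R < A - 6a_1$ is equivalent to $R^2 < (A - 6a_1)^2$. Expanding this, using $12a_1 A = 9a_1^3 + 54a_1^2 - 27a_1$, collapses it to $-3a_1^3 + 18a_1^2 - 27a_1 + 324 < 0$, that is, $a_1^3 - 6a_1^2 + 9a_1 - 108 > 0$, as claimed.

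The main obstacle is purely bookkeeping: in part (1) the expansion of $t\bigl(2t - 9a_3(a_1-1)\bigr)^2 + 4(3a_1a_3 - t)^3$ must be carried out carefully enough to witness the cancellation of the cubic-in-$t$ terms and to read off the coefficients $36a_3$, $-27a_3^2(a_1^2+6a_1-3)$, and $108a_1^3a_3^3$; once these are in hand, the remaining manipulations in both parts are forced.
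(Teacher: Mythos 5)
Your proposal is correct and takes essentially the same route as the paper: use the critical-point relation to get $P(c)=\frac{2(3a_1a_3-a_2^2)c-a_1a_2}{9a_3}$, square the modulus inequality, reduce it to a quadratic in $a_2^2$, complete the square, and justify the final square-root step by checking the radicand is positive (the paper does this via $\psi'(x)=\frac{9}{4}(x-3)^2(x-1)$ and $\psi(3)=324$, which is the same computation behind your quartic's minimum). For part (2) both arguments are the same squaring manipulation; your explicit observation that $A-6a_1=\frac{3(a_1-3)(a_1+1)}{4}\geq 0$ neatly supplies the sign condition the paper dismisses as ``straightforward'' in the converse direction.
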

 \begin{proof} First note that $a_1 >3$ whenever $a_2 ^2 < 3 a_1 a_3$.
 	\begin{enumerate}
 		\item Squaring both sides of $|P(c)+\frac{a_2}{3a_3}| > \frac{2}{\sqrt{a_3}}$, we get $$(-9 a_1 a_2 a_3+2 a_2 ^3 + 9 a_2 a_3)^2 + 4 (3 a_1 a_3 -a_2 ^2)^3 > 4 (27)^2 a_3 ^3 .$$
 		This gives that $4 a_2 ^4  - 3 a_2 ^2 a_3 (a_1 ^2 +6 a_1 -3) +12 a_1 ^3 a_3 ^2 > 324 a_3 ^2$, and this is equivalent to \begin{equation}
 			\left(2 a_2 ^2 - \frac{3 a_3(a_1 ^2 +6 a_1 -3)}{4} \right)^2 > a_3 ^2 \left( \left( \frac{3 (a_1 ^2 +6 a_1 -3)}{4}\right)^2 -12 a_1 ^3 +324 \right)
 			\label{aux-1}.\end{equation} 
 		To show that the right-hand side expression is positive for all $a_1 >3$, consider $\psi(x)=\left( \frac{3 (x ^2 +6 x -3)}{4}\right)^2 -12 x ^3 +324 $. Note that $\psi'(x)=\frac{9}{4}(x^3 - 7x^2+15 x-9)=\frac{9}{4}(x-3)^2 (x-1)$, and it is positive for all $x >3$. This means that  $\psi$ is strictly increasing in $(3, \infty)$. Since $\psi(3)=324>0$, we have that $\psi(x)>0$ for all $x>3$.  It now follows from Equation (\ref{aux-1}) that $$|2 a_2 ^2 - \frac{3 a_3}{4} (a_1 ^2 +6 a_1 -3)| > a_3 \sqrt{\left(\frac{3 (a_1 ^2 +6 a_1 -3)}{4} \right)^2 -12 a_1 ^3 +324}.$$ It can be seen that the converse is also true.
 		\item  Let $6  a_1 < \frac{3}{4} (a_1 ^2 +6 a_1 -3)-\sqrt{\left(\frac{3 (a_1 ^2 +6 a_1 -3)}{4} \right)^2 -12 a_1 ^3 +324}$. Then \\
 		$$\sqrt{\left(\frac{3 (a_1 ^2 +6 a_1 -3)}{4} \right)^2 -12 a_1 ^3 +324} < \frac{3}{4} (a_1 ^2 +6 a_1 -3)- 6  a_1.$$ Squaring both sides, we get 
 		$-12 a_1 ^3 +324 < -9 a_1 ( a_1 ^2 + 6a_1-3)+ 36 a_1 ^2 $.  This is nothing but $   a_1 ^3 -6 a_1 ^2 +9 a_1 -108 > 0 $. The converse can be found to be true by straight-forward calculations.
 		
 	\end{enumerate}
 \end{proof}
 \begin{lemma}
 	Let $P(z)=a_1 z +a_2 z^2 +a_3 z^3$ be a reduced independence polynomial and $a_2 ^2<  3a_1 a_3$. If $a_1\geq 7$  then $\mathcal{J}(P)$ is totally disconnected.
 	\label{biggerequaltoseven}\end{lemma}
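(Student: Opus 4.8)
The plan is to reduce everything to showing that \emph{the first} iterate of a critical point already escapes the critical disk. Since $a_2^2<3a_1a_3$, the polynomial $P$ is bicritically non-real, so by Lemma~\ref{totally-disconnected} its Julia set is either connected or totally disconnected, and by Lemma~\ref{totally-disconnected}(2) it is totally disconnected as soon as the forward orbit of a critical point meets the complement of $D_P$ (Definition~\ref{criticaldisk}). Taking $c=\frac{-a_2+i\sqrt{3a_1a_3-a_2^2}}{3a_3}$, it therefore suffices to prove $\bigl|P(c)+\frac{a_2}{3a_3}\bigr|>\frac{2}{\sqrt{a_3}}$, which by Lemma~\ref{estimate-1}(1) is equivalent to
\[
\Bigl|2a_2^2-\tfrac{3a_3(a_1^2+6a_1-3)}{4}\Bigr|>a_3\sqrt{\Bigl(\tfrac{3(a_1^2+6a_1-3)}{4}\Bigr)^2-12a_1^3+324},
\]
the expression under the square root being genuinely positive for $a_1>3$ (this is established inside the proof of Lemma~\ref{estimate-1}(1), so squaring will be legitimate throughout).

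Next I would use the hypothesis $a_2^2<3a_1a_3$ only as an \emph{upper} bound on $a_2^2$ (no lower bound on $a_2$ is needed). A short computation gives
\[
2a_2^2-\tfrac{3a_3(a_1^2+6a_1-3)}{4}<6a_1a_3-\tfrac{3a_3(a_1^2+6a_1-3)}{4}=-\tfrac{3a_3}{4}(a_1-3)(a_1+1)\le 0
\]
for every $a_1\ge 3$, so the left-hand absolute value equals $\frac{3a_3(a_1^2+6a_1-3)}{4}-2a_2^2$ and is therefore strictly larger than $\frac{3a_3}{4}(a_1-3)(a_1+1)$. Hence it is enough to verify $\frac{3}{4}(a_1-3)(a_1+1)\ge\sqrt{\bigl(\frac{3(a_1^2+6a_1-3)}{4}\bigr)^2-12a_1^3+324}$. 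Now observe that $\frac{3}{4}(a_1-3)(a_1+1)=\frac{3}{4}(a_1^2+6a_1-3)-6a_1$, so this is exactly the right-hand inequality of the equivalence in Lemma~\ref{estimate-1}(2); thus it holds precisely when $a_1^3-6a_1^2+9a_1-108>0$. Finally I would note that $q(x)=x^3-6x^2+9x-108$ satisfies $q'(x)=3(x-1)(x-3)>0$ on $(3,\infty)$ and $q(7)=4>0$, so $q(a_1)>0$ for all integers $a_1\ge 7$. Chaining the inequalities yields $\bigl|P(c)+\frac{a_2}{3a_3}\bigr|>\frac{2}{\sqrt{a_3}}$, and Lemma~\ref{totally-disconnected}(2) concludes.

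The hard part here is not conceptual — Lemmas~\ref{totally-disconnected} and \ref{estimate-1} carry all the analytic content — but the bookkeeping. The point that needs genuine care is the sign of $2a_2^2-\frac{3a_3(a_1^2+6a_1-3)}{4}$: one must check that it is non-positive (for $a_1\ge3$), so that dropping the absolute value and then bounding below is valid, and that the quantity under the radical is positive so that squaring does not reverse anything. The agreeable feature is that, after these reductions, everything collapses to the single cubic $a_1^3-6a_1^2+9a_1-108$ already isolated in Lemma~\ref{estimate-1}(2), which is exactly why the threshold falls at $a_1=7$.
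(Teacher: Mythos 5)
Your proposal is correct and follows essentially the same route as the paper: both reduce the claim via Lemma~\ref{totally-disconnected}(2) and Lemma~\ref{estimate-1}(1) to the inequality controlled by the cubic $a_1^3-6a_1^2+9a_1-108$ of Lemma~\ref{estimate-1}(2), using $a_2^2<3a_1a_3$ only as an upper bound; the paper merely runs the algebra in the forward direction (from $q(a_1)>0$ to the absolute-value inequality) while you run it backward, with the same sign check on $2a_2^2-\tfrac{3a_3(a_1^2+6a_1-3)}{4}$.
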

 \begin{proof}
 	If $a_1\geq 7$ then $a_1 ^3-6 a_1 ^2 +9 a_1 -108 > 0$. To see this, note that   the function $\phi(x)= x^3 -6 x^2 +9 x -108$ is strictly increasing in $(3, \infty)$ (its derivative is $3(x-1)(x-3)$) and  $\phi(7)>0$). It now follows from Lemma~\ref{estimate-1}(2) that $6  a_1 < \frac{3}{4} (a_1 ^2 +6 a_1 -3)-\sqrt{\left(\frac{3 (a_1 ^2 +6 a_1 -3)}{4} \right)^2 -12 a_1 ^3 +324}.$ This is equivalent to $3 a_1 a_3 < \frac{a_3}{2} \left( \frac{3}{4} (a_1 ^2 +6 a_1 -3)-\sqrt{\left(\frac{3 (a_1 ^2 +6 a_1 -3)}{4} \right)^2 -12 a_1 ^3 +324} \right).$
 	Since $a_2 ^2 < 3a_1 a_3$, 
 	
 	$a_2 ^2 	< \frac{a_3}{2} \left( \frac{3}{4} (a_1 ^2 +6 a_1 -3)-\sqrt{\left(\frac{3 (a_1 ^2 +6 a_1 -3)}{4} \right)^2 -12 a_1 ^3 +324} \right).$ In other words, 
 	$2a_2 ^2 - \frac{3 a_3 (a_1 ^2 +6 a_1 -3)}{4} < - a_3 \sqrt{\left(\frac{3 (a_1 ^2 +6 a_1 -3)}{4} \right)^2 -12 a_1 ^3 +324}  $. Since the right-hand side is   negative, we have 	$\left |2a_2 ^2 - \frac{3 a_3 (a_1 ^2 +6 a_1 -3)}{4} \right | >a_3 \sqrt{\left(\frac{3 (a_1 ^2 +6 a_1 -3)}{4} \right)^2 -12 a_1 ^3 +324}  $.
 	Now, using Lemma~\ref{estimate-1}(1), we have $|P(c)+\frac{a_2}{3a_3}| > \frac{2}{\sqrt{a_3}}$ where $c$ is the critical point of $P$ with positive imaginary part. Therefore, the image of the critical point $c$ is in the complement of the critical disk. It follows from Lemma~\ref{totally-disconnected}(2) that  the Julia set of $P$ is totally disconnected. 	\end{proof}
 
 \begin{proof}[Proof of Theorem~\ref{bicritically non-real}]
Since $a_2 ^2 < 3a_1 a_3$, it follows from Theorem~\ref{IA-JS} and  Remark~\ref{IA-JS-rem}(1) that $\mathcal{A}(G) =\mathcal{J}(P_G)$. Denoting $P_G$ by $P$, as stated earlier, it now becomes enough to show that $ \mathcal{J}(P )$ is totally disconnected.
\par 	The proof follows from Lemma~\ref{biggerequaltoseven} for $a_1 \geq 7$. In order to  prove it for $a_1 \leq 6$, let $c$ denote the critical point of $P$ with positive imaginary part.
 	
 	For $a_1 =4$, the possible values of $( a_2,a_3)$ are $( 3,1),( 4,1), ( 5,1)$ by Lemma~\ref{basic-independentpoly}(2). The condition $a_2 ^2 < 3 a_1 a_3$ gives that  $(a_2,a_3)=(3,1)$,  and the reduced independence polynomial is $ 4z + 3z^2 + z^3$. The critical point of $P $ with positive imaginary part is $ c= -1+\frac{i}{\sqrt{3}}$ and the critical disk is $D_P=\{z:|z+1| \leq 2\}$. Note that $P(c )= -2 + \frac{i 2\sqrt{3}}{9}$ and the real part of $P^2 (c )  $ is $\frac{-32}{9}$. This gives that the forward orbit of the critical point $c$ intersects the complement of the critical disk. Therefore, the Julia set is totally disconnected by Lemma~\ref{totally-disconnected}(2).
 	\par 
 	For $a_1 =5$, if the Julia set of $P$ is connected then  $|P(c)+ \frac{a_2}{3a_3}| \leq \frac{2}{\sqrt{a_3}}$ by Lemma~\ref{connected-Juliaset}(1) and Lemma~\ref{filled-Julia-estimate}. It follows from Lemma~\ref{estimate-1}(1) that $ \frac{39 - \sqrt{345}}{2}a_3 \leq  a_2 ^2 \leq \frac{39 +\sqrt{345}}{2}a_3$, i.e.,  $ 10.21 a_3 \leq  a_2 ^2 \leq 28.78 a_3$.  However, we have a tighter upper bound for $a_2 ^2$ using the assumption that $a_2 ^2 <  3 a_1 a_3$, and we use this instead of $28.78 a_3$. Thus, if $\mathcal{J}(P)$ is connected then \begin{equation}
 		10.21 a_3 \leq  a_2 ^2  < 15 a_3.
 		\label{ineqa_1=5}
 		\end{equation} 	
 	It follows from Lemma~\ref{basic-independentpoly}(2) that $ 3 \leq a_2 \leq 8$ and, if $a_2 =8$ then $a_3=4$. The latter is not possible as $a_2 ^2 < 3 a_1 a_3$. Therefore,$$3 \leq a_2 \leq 7.$$
 	It follows from the left-hand inequality of Inequation~(\ref{ineqa_1=5}) that if $a_3 \geq 5$ then $a_2 \geq 8$. It also follows from Inequation~(\ref{ineqa_1=5}) that $a_3 \neq 1$. Thus,   $2 \leq a_3 \leq 4 $. Putting all these values of $a_3$ in Inequation~(\ref{ineqa_1=5}), it is found that    $(a_2, a_3) \in \{(5,2),(6,3), (7,4)\}$.
 	
 	\par  For $(a_2, a_3)= (5,2)$, the critical disk of $P(z)=5z+5z^2+2z^3$ is $\{z: |z+\frac{5}{6}| \leq \sqrt{2} \}$ and the real part of $P^2 (c)$ is $-4.55 $.
 	\par  For $(a_2, a_3)= (6,3)$, the critical disk of  $P(z)=5z+6z^2+3z^3$ is $ \{z: |z+\frac{2}{3}| \leq \frac{2}{\sqrt{3}} \}$ whereas the real part of $P^2 (c) $ is $-4.15$. 
 	\par  Similarly, for $(a_2, a_3)= (7,4)$, the critical disk of $P(z)=5z+7z^2+4z^3$ is $ \{z: |z+\frac{7}{12}| \leq 1\}$ and the real part of $P^2 (c)$ is $-3.41$. 
 	\par The values of $P^2 (c)$, the image of the critical value is taken to be rounded off to two decimal places. These  give that this image is outside the critical disk in each case, i.e., $(a_2, a_3) \in \{(5,2),(6,3), (7,4)\}$. See Table ~\ref{a_1=5} for detailed calculations.  Thus, the forward orbit of $c$ is unbounded. However, this is a contradiction to our assumption that the Julia set is connected (see Lemma~\ref{connected-Juliaset}(1)). Therefore the Julia set of $P$ is not connected.    It now follows from Lemma~\ref{totally-disconnected}(1) that the Julia set of $P$ is totally disconnected in all these cases.
 	
 	\par For $a_1 =6$, the proof is similar to the previous case.
 	If the Julia set of $P$ is connected then it follows from Lemma~\ref{filled-Julia-estimate} and Theorem~\ref{estimate-1}(1) that $15.75 a_3 \leq a_2 ^2 \leq 36 a_3$. The assumption $a_3 < 3 a_1 a_3$ gives that $a_2 ^2 < 18 a_3$. Thus, if $\mathcal{J}(P)$ is connected then \begin{equation}
 		15.75 a_3  \leq a_2 ^2 < 18 a_3.
 		\label{ineqa_1=6}
 	\end{equation}
 	It follows from Lemma~\ref{basic-independentpoly}(3) that $3 \leq a_2 \leq 12$, and if $a_2 =12$ then $a_3 =8$. The latter is not possible as $a_2 ^2 < 3 a_1 a_3$. Therefore $$ 3 \leq a_2  \leq 11.$$
 	
 	It follows  from the first inequation of  Inequation~\ref{ineqa_1=6} that if $a_3 \geq 9 $ then   $a_2 \geq 12$. However, this does not agree with $3 \leq a_2 \leq 11$. Now if $a_3 =2$ or $8$ then there is no positive integer value of $a_2$ satisfying Inequation~\ref{ineqa_1=6}.	Thus, $a_3 \in \{1, 3, 4, 5, 6, 7 \}$. By using Inequation~\ref{ineqa_1=6}, we have $(a_2, a_3) \in \{(4,1), (7,3), (8,4), (9,5), (10,6), (11, 7)\}$. In each of these cases, as evident from Table~\ref{a_1=6}, $P^2 (c)$ is outside the critical disk of $P$. Therefore, the Julia set is not connected.  It now follows from Lemma~\ref{totally-disconnected}(1) that the Julia set of $P$ is totally disconnected in all these cases.
 	
 	\par In the Tables~\ref{a_1=5}, \ref{a_1=6}, $D_r (a)$ denotes the closed disk centered at $a$ and with radius $r$.

 	\renewcommand{\arraystretch}{1.4} 
 \begin{longtable}{|m{.5cm}|m{0.5cm}|m{2cm}|m{2.5cm}|m{2.3cm}|m{1.5cm}|}
 	\hline
 	\textbf{$a_3$} & \textbf{$a_2$} & \centering \textbf{$c $} & \centering \textbf{$P(c)$} & \centering \textbf{$P^2(c)$} &Critical disk \\ 
 	\hline
 	2 & 5 & $-\frac{5}{6}+\frac{\sqrt{5}}{6}i$ & $-\frac{50}{27}+\frac{5\sqrt{5}}{54}i$ & $-4.55+1.44i$ & $D_{\sqrt{2}}(-\frac{5}{6})$ \\
 	\hline
 	3 & 6 & $-\frac{2}{3}+\frac{1}{3}i$ & $-\frac{14}{9}+\frac{2}{9}i$ & $-4.16+1.77i$ & $D_\frac{2}{\sqrt{3}}(-\frac{2}{3})$ \\
 	\hline
 	4 & 7 &$-\frac{7}{12}+\frac{\sqrt{11}}{12}i$ & $-\frac{287}{216}+\frac{11\sqrt{11}}{216}i$ & $-3.41+1.26i$ & $D_1(-\frac{7}{12})$ \\
 	\hline
 	
 	\caption{\centering The value of $P^2 (c)$ for $a_1 =5$ when $a_2 ^2 < 3 a_1 a_3$.}
 	\label{a_1=5}
 \end{longtable}
 
 \begin{longtable}{|m{0.4cm}|m{0.4cm}|m{1.7cm}|m{2.3cm}|m{2.5cm}|m{1.5cm}|}
 	\hline
 	\textbf{$a_3$} & \textbf{$a_2$} & \centering \textbf{$c $} & \centering $P(c )$ & \centering \textbf{$P^2(c)$} &  Critical disk  \\ 
 	\hline
 	1 & 4 & $-\frac{4}{3}+\frac{\sqrt{2}}{3}i$ & $-\frac{88}{27}+\frac{4\sqrt{2}}{27}i$ & $-11.43+2.46i$ & $D_2(-\frac{4}{3})$ \\
 	\hline
 	3 & 7 & $-\frac{7}{9}+\frac{\sqrt{5}}{9}i$ & $-\frac{448}{243}+\frac{10\sqrt{5}}{243}i$ & $-5.99+0.99i$ & $D_\frac{2}{\sqrt{3}}(-\frac{7}{9})$ \\
 	\hline
 	4 & 8 &$-\frac{2}{3}+\frac{\sqrt{2}}{6}i$ & $-\frac{44}{27}+\frac{2\sqrt{2}}{27}i$ & $-5.72+1.23i$ & $D_1(-\frac{2}{3})$ \\
 	\hline
 	5 & 9 & $-\frac{3}{5}+\frac{1}{5}i$ & $-\frac{972}{675}+\frac{2}{25}i$ & $-4.83+0.89i$ & $D_\frac{2}{\sqrt{5}}(-\frac{3}{5})$ \\
 	\hline
 	6 & 10 & $-\frac{5}{9}+\frac{\sqrt{2}}{9}i$ & $-\frac{310}{243}+\frac{8\sqrt{2}}{243}i$ & $-3.81+0.45i$ & $D_\frac{2}{\sqrt{6}}(-\frac{5}{9})$ \\
 	\hline
 	7 & 11 & $-\frac{11}{21}+\frac{\sqrt{5}}{21}i$ & $-\frac{1496}{1323}+\frac{10\sqrt{5}}{1323}i$ & $-2.84+0.13i$ & $D_\frac{2}{\sqrt{7}}(-\frac{11}{21})$ \\
 	\hline
 	
 	\caption{\centering The value of $P^2 (c)$ for $a_1 =6$ when $a_2 ^2 < 3 a_1 a_3$.}
 	\label{a_1=6}	
 \end{longtable}
 
\end{proof}
 \begin{Remark}
 	If $a_2 ^2<  3a_1 a_3$ then  $P$ has two distinct non-real critical points. If  $a_2 ^2< 3  a_1 a_3$ then $a_2 ^2  -4 a_3 (a_1 -1) <a_3 (4-a_1)$. Since $a_1 \geq 4$ (see Lemma~\ref{basic-obs}), we have $a_2 ^2  <4 a_3  a_1 -4a_3  $. This actually means that there is no non-zero real fixed point of $P$.
 \end{Remark}
\subsection{Proof of Theorem~\ref{unicritical} }
A reduced independence polynomial $a_1 z +a_2 z^2 +a_3 z^3$ has a single critical point if and only if $a_2 ^2 = 3 a_1 a_3$. The critical point is $\frac{-a_2}{3 a_3}$. We determine all the possibilities of independence attractors in this case through the proof of Theorem~\ref{unicritical}.

\begin{proof}[Proof of Theorem~\ref{unicritical}]
First we show that $a_1 \neq 4$.
If $a_1 =4$, then $a_2 <6$, and it follows from Lemma~\ref{basic-independentpoly}(1) that $(a_2, a_3) \in \{(3,1), (4,1), (5,2)\}$. However, $a_2 ^2 =12 a_3$ is no longer satisfied by any of these possibilities.   
\par If 
  $a_1=3$ then it follows from Lemma~\ref{basic-obs} that  $P(z)=3z+3z^2+z^3$. This polynomial is conformally conjugate to $z \mapsto z^3$. Indeed,  $P=\phi^{-1} \circ f \circ \phi$ where $f(z)=z^3$ and $\phi(z)=z+1$. The Julia of $z^3$ is the unit circle and therefore the Julia set of $P$  is the circle with center at $-1$ and radius $1$. We are done by Theorem~\ref{IA-JS} and Remark~\ref{IA-JS-rem}(1)  because $\{z: P^m(z)=-1\}=\{-1\}$ for all $m $.

\par Finally, let $a_1 >4$.
 Since $a_2 ^2 -4   a_3 (a_1 -1) <0$, the two non-zero fixed points of $P$ are non-real. In other words, $P$ has no fixed point on the negative real axis.  By Lemma~\ref{Juliaset-prop}(3), the Julia set of $P$ is totally disconnected. We are done since the Julia set is the independence attractor of the graph by Theorem~\ref{IA-JS} and Remark~\ref{IA-JS-rem}(1). 

\end{proof}
\begin{Remark}
	\begin{enumerate}
	\item There is only one graph for $a_1 =3$.	
		\item 
	For $a_1 >4$, the fixed points are $\alpha$ and $\overline{\alpha}$ where $\alpha =\frac{-a_2+i \sqrt{4 a_3 (a_1 -1)-a_2 ^2}}{2 a_3}$. Using  $a_2 ^2 = 3 a_1 a_3$, we get $\alpha^2 =\frac{a_1 +2-i \sqrt{3a_1 (a_1-4)}}{2a_3}$ and $P'(\alpha)=\frac{-a_1 +6-i \sqrt{3a_1 (a_1-4)}}{2}$. Note that $|P'(\alpha)|^2=(a_1 -3)^2$.  As  $a_1 >4, |P'(\alpha)|>1$. Therefore $|P'(\overline{\alpha})|>1$.
	Hence all the finite fixed points are repelling. 
	\item  For $a_1 >9$, Lemma~\ref{Buff-2003} gives that the Julia set of $P$ is disconnected. But Theorem~\ref{unicritical} is a statement about $a_1 >4$ and it gives that the Julia set of $P$ is totally disconnected.

\end{enumerate}
\label{unicritical-fixedpoints}
\end{Remark}
 \subsection{Proof of Theorem~\ref{bicritically real-two non-real fixed points or one fixed point} }
A cubic reduced independence polynomial $P(z)=a_1 z+a_2 z^2 +a_3 z^3$ has two distinct real critical points whenever $a_2 ^2> 3a_1 a_3$.
The  critical points are 
 \begin{equation}
  c_1=\frac{-a_2 -\sqrt{a_2 ^2 -3a_1 a_3}}{3a_3}~\mbox{ and}~ c_2 =\frac{-a_2 +\sqrt{a_2 ^2 -3a_1 a_3}}{3a_3}.
  \label{two-criticalpoints}
  \end{equation} Note for later use that  $c_1 < c_2$ and for $i =1,2$, 
 \begin{equation}
 P(c_i) = \frac{6 a_1 a_3 c_i -a_1 a_2 -2 a_2 ^2 c_i}{9 a_3}. 
 \label{criticalvalue-bicritical-real}
 \end{equation} 
 We also need another observation:  \begin{equation}  P'(z)=3a_3 (z-c_1) (z-c_2). \label{P-derivative}\end{equation} 
 
  Recall that the non-zero fixed points of $P$ are  $$\delta_1 =\frac{-a_2 - \sqrt{a_2 ^2 -4a_3 (a_1 -1)}} {2a_3} ~\mbox{and }~ \delta_2 =\frac{-a_2 + \sqrt{a_2 ^2 -4a_3 (a_1 -1)}} {2a_3}. $$
  There are three situations depending on the nature of non-zero fixed points of $P$ that require different treatments.
 \begin{itemize}
 	\item [(A)]$a_2 ^2 <4 a_3(a_1 -1)$: Two distinct non-real  fixed points (with the same real part).
 	\item [(B)] $a_2 ^2= 4 a_3(a_1 -1)$: One fixed point and it is $-\frac{a_2}{2a_3}$.
 	
 	\item [(C)]$a_2 ^2 > 4 a_3(a_1 -1)$: Two distinct real fixed points, and both are negative.
 
 \end{itemize} 
 \begin{Observation}
 	
 	\begin{enumerate}
 		\item 	Since $P$ has two real critical points $c_1 < c_2$, we have  $P'(x)=3a_3(x-c_1)(x-c_2)$ for $ x\in\mathbb{R}$. This shows that $P$ is strictly increasing in $(-\infty,c_1) \cup(c_2,\infty)$ and is strictly decreasing in $(c_1,c_2)$.
 		\item  If there are two real non-zero fixed points of $P$,  $\delta_1 < \delta_2 < 0$ then $P(x)-x=a_3x(x-\delta_1)(x-\delta_2)$ for $ x\in\mathbb{R}$. This gives that $P(x)<x$ for $x \in (-\infty, \delta_1) \cup ( \delta_2, 0 )$ and $P(x)>x$ for $x \in (\delta_1, \delta_2) \cup (0, \infty)$.
 	\item Let $P$ have two real non-zero fixed points,  $\delta_1 < \delta_2 < 0$.  	Since $P(x)<x$ for $x \in (-\infty, \delta_1)$, the sequence $\{P^n (x)\}_{n>0}$ is strictly decreasing and $\lim\limits_{n \to \infty} P^n (x)=-\infty$.
 	\end{enumerate}
 	\label{obs 3.1}
 \end{Observation}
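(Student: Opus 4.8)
The plan is to reduce all three parts to elementary factorizations of low-degree real polynomials followed by sign analysis on the real line. For part~(1), I would start from $P'(x)=3a_3x^2+2a_2x+a_1$, a quadratic with positive leading coefficient whose two real roots are exactly $c_1$ and $c_2$ (this is precisely Equation~(\ref{two-criticalpoints}) under the standing hypothesis $a_2^2>3a_1a_3$). Hence $P'(x)=3a_3(x-c_1)(x-c_2)$, which is Equation~(\ref{P-derivative}) restricted to $\mathbb{R}$. Since $a_3>0$ and $c_1<c_2$, this product is positive on $(-\infty,c_1)\cup(c_2,\infty)$ and negative on $(c_1,c_2)$, so $P$ is strictly increasing on the first two intervals and strictly decreasing on the middle one.

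For part~(2), the key observation is that $P(x)-x=a_3x^3+a_2x^2+(a_1-1)x=x\bigl(a_3x^2+a_2x+(a_1-1)\bigr)$, and $\delta_1,\delta_2$ are by definition the two roots of $a_3x^2+a_2x+(a_1-1)$, being the non-zero fixed points of $P$. Therefore $P(x)-x=a_3x(x-\delta_1)(x-\delta_2)$. I would then simply tabulate the signs of the three linear factors $x$, $x-\delta_1$, $x-\delta_2$ on the four intervals determined by $\delta_1<\delta_2<0$: on $(-\infty,\delta_1)$ all three are negative, so the product is negative; on $(\delta_1,\delta_2)$ exactly one is positive; on $(\delta_2,0)$ exactly two are positive; on $(0,\infty)$ all three are positive. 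Multiplying by $a_3>0$ yields $P(x)<x$ on $(-\infty,\delta_1)\cup(\delta_2,0)$ and $P(x)>x$ on $(\delta_1,\delta_2)\cup(0,\infty)$.

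For part~(3), I would use part~(2): for $x\in(-\infty,\delta_1)$ we have $P(x)<x<\delta_1$, so in particular $P(x)$ again lies in $(-\infty,\delta_1)$, i.e., this interval is forward-invariant. Iterating, $\{P^n(x)\}_{n>0}$ is a strictly decreasing sequence contained in $(-\infty,\delta_1)$. Such a sequence either diverges to $-\infty$ or converges to a finite limit $L$, and in the latter case continuity of $P$ forces $L$ to be a fixed point with $L\le P(x)<\delta_1$; but the only fixed points of $P$ are $0,\delta_1,\delta_2$, all of which are $\ge\delta_1$, a contradiction. Hence $\lim_{n\to\infty}P^n(x)=-\infty$.

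None of these steps presents a genuine obstacle; the statement is a collection of routine preliminaries. The only points that need a moment's care are keeping the sign bookkeeping in part~(2) straight and, in part~(3), first noting the forward-invariance of $(-\infty,\delta_1)$ before running the monotone-sequence argument.
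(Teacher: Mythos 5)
Your proof is correct and follows essentially the same route as the paper, which states these facts as an observation with exactly the two factorizations $P'(x)=3a_3(x-c_1)(x-c_2)$ and $P(x)-x=a_3x(x-\delta_1)(x-\delta_2)$ built in; your sign bookkeeping and the monotone-sequence argument in part (3) just make the implicit details explicit.
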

 We now present the proof of Theorem \ref{bicritically real-two non-real fixed points or one fixed point}.
\begin{proof}[Proof of Theorem~\ref{bicritically real-two non-real fixed points or one fixed point}(1)] ({Two distinct non-real fixed points: $a_2 ^2 <4a_3 (a_1 -1)$})
 
  	The  hypothesis   $a_2 ^2 <4a_1 a_3-4a_3$ implies that $P$ does not have any real non-zero fixed point. The  Julia set of $P$ is totally disconnected by Lemma~\ref{Juliaset-prop}(3). The independence attractor $\mathcal{A}(G)$ is the Julia set of $P$ by Theorem~\ref{IA-JS} and Remark~\ref{IA-JS-rem}(1), and we are done.
  	
  \end{proof}
  \begin{Remark}
  	If $a_2 ^2> 3a_1 a_3$ and $a_2 ^2 <4a_1 a_3-4a_3$ then $a_1 \geq 5$.
  \end{Remark}


\begin{proof}[Proof of Theorem~\ref{bicritically real-two non-real fixed points or one fixed point}(2)]({One non-zero fixed point: $a_2 ^2 =4a_3 (a_1 -1)$})

	The polynomial $P$ has a single non-zero finite fixed point if  $a_2 ^2 =4a_1 a_3-4a_3$. 
	Let the fixed point $\frac{-a_2}{2 a_3}$ be denoted by $\delta$. Note that $\delta < c_1$.

	That $\delta$ is the parabolic fixed point of $P$, i.e., $P'(\delta)=1$ can be seen using $a_2 ^2 =4 a_3 (a_1-1)$. Since $\delta$ is the only non-zero fixed point of $P$ and that is real, we have $P(x)-x=a_3 x (x-\delta)^2$ for all real $x$. This gives that for all $x <0, x \neq \delta$, $P(x)-x  < 0$. In particular, $P(x) < x$ for all $x \in (\delta,  c_1]$. It is also clear from Observation~\ref{obs 3.1}(1) that $P$ is strictly increasing in $ (\delta, c_1)$. This gives that   the sequence $\{P^n (x)\}_{n>0}$ is strictly decreasing and bounded below by $\delta$, and therefore $\lim \limits_{n \to \infty} P^{n} (x) =\delta$ for all $x \in (\delta, c_1]$.
		Since $P$ is strictly increasing and $P(x)<x$ in $(-\infty, \delta)$, we have  
		\begin{equation}
		 \lim_{n \to \infty} P^n (x) = -\infty 
		~\mbox{for all }~x < \delta .
		\label{basin-infinity}
		\end{equation} 
	Now, we look at $P(c_2)$ first, in order to understand its forward orbit. 
	Using Equation~\ref{criticalvalue-bicritical-real} and $a_2 ^2 =4a_3 (a_1 -1)$, it can be seen that  $$P(c_2)-\delta = \frac{-4 (a_1 -4)(-a_2 +\sqrt{a_3 (a_1 -4)})+27 a_2-6 a_1 a_2}{54 a_3}.$$
	Now, using $a_2 =2\sqrt{a_3(a_1 -1)}$  and  simplifying the expression, we have
	\begin{equation}
 P(c_2)-\delta = \frac{(11-2a_1)(\sqrt{a_1 -1})- 2 (a_1 -4)\sqrt{a_1 -4}}{27\sqrt{a_3}}. 
 \label{alpha-gamma}
\end{equation}
	\begin{center}
	\begin{figure}[h!]
		\begin{center}
			\begin{subfigure}{.5\textwidth}
				\includegraphics[width=1.0\linewidth]{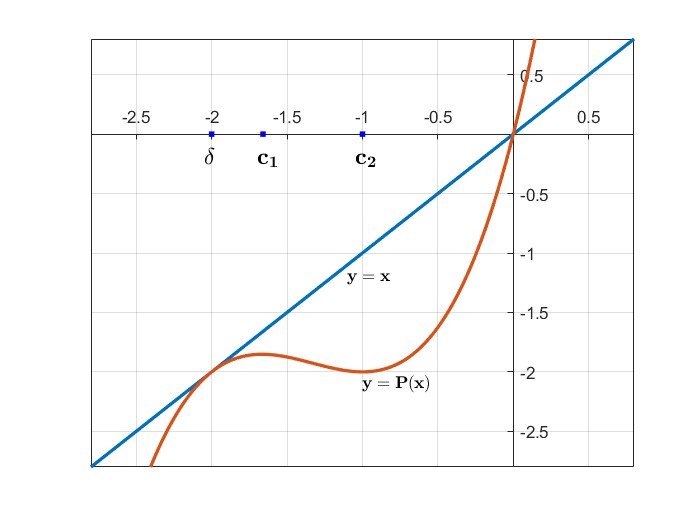}
				\caption{Graph of $P(x)=x^3+4x^2+5x$}
			\end{subfigure}%
			\begin{subfigure}{.5\textwidth}
				\centering
				\includegraphics[width=1.0\linewidth]{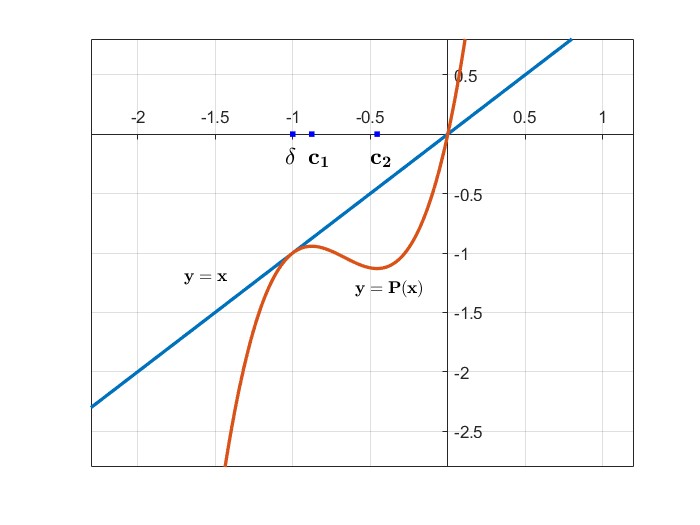}
				\caption{Graph of $P(x)=5x^3+10x^2+6x$}
			\end{subfigure}
		\end{center}
		\caption{Graph of $P (x),  x \in \mathbb{R}$ when it has two real critical points and a single real fixed point: (a) is an example of $P$ where the critical value corresponding to the bigger critical point is equal to the real fixed point and (b) is an example of $P$ where the critical value corresponding to the bigger critical point is less than the real fixed point.}	
		\label{bicriticalreal-singlefixedpoint}
	\end{figure}	
\end{center}
 In this case, $a_2 ^2 > 3a_1 a_3$ gives that $a_1 \geq 5$.  
 If  $a_1 =5$ then   $P(c_2)=\delta$ (see Figure~\ref{bicriticalreal-singlefixedpoint}(a) where $a_3=1$). Consequently, the interval $[\delta, 0]$ is mapped onto itself and $P(x)<x$ for all points in $(\delta,0)$. 
 For each point $x \in (\delta,0)$, either $P^n (x)=\delta$ for some $n$ or $\{P^n (x)\}_{n >0}$ is strictly decreasing and bounded below by $\delta$.
  There are infinitely many points $x$ including $c_2$ satisfying the first possibility, and all these points are in $(c_2,0)$.  For each $x$ satisfying the second possibility, $P^n (x)$   must converge and the limit point should be a fixed point in $[\delta, 0)$. As $\delta$ is the only fixed point  in this interval,  we have $\lim\limits_{n \to \infty}P^n (x)=\delta$ for all $x \in (\delta, 0)$. 
 As both the critical points remain bounded under the iteration of $P$, the Julia set of $P$ is connected by Lemma~\ref{connected-Juliaset}(1).

 For $a_1 >5, $ we have $P(c_2) - \delta <0$ (see Figure~\ref{bicriticalreal-singlefixedpoint}(b) where $a_1=6$ and $ a_3=5$). It follows from Equation~\ref{basin-infinity} that $\lim\limits_{n\to \infty} P^n (c_2) =-\infty$. The Julia set is disconnected by Lemma~\ref{connected-Juliaset}(1). It is not totally disconnected as there are at least two Fatou components namely the basin of $\infty$ and the parabolic domain corresponding to $\delta$. Since $\mathcal{A}(G)$ is the Julia set of $P$ in this case by Theorem~\ref{IA-JS} and Remark~\ref{IA-JS-rem}(1), we are done. \end{proof}

 \begin{Remark}
 	\begin{enumerate}
 		\item  The assumption $a_2 ^2> 3a_1 a_3$ together with $a_2 ^2 =4a_1 a_3-4a_3$ give $a_1 >4$.
 		For $a_1=5$ and $ a_2=8$, we have $a_3=4$. It can be seen that graphs exist for which   $5 z+ 8 z^2 +4 z^3$ is the reduced independence polynomial. The corresponding graph has three components, two of which are copies of complete graphs on two vertices and the third is an isolated vertex.\\
 		If $a_1 =6$ then $a_2 ^2 =20 a_3$.  For $a_3 =5$, we have $a_2 =10$ and the reduced independence polynomial is $ 6 z+ 10 z^2 +5 z^3$. Append a triangle to a  pendant vertex of a path on three vertices. Consider a graph with this as one component  and an isolated vertex as another to see that its reduced independence polynomial is $ 6 z+ 10 z^2 +5 z^3$.  
 		\item For $a_1=5$, it can be proved that the parabolic domain corresponding to $\delta$ is not completely invariant leading to infinitely many components of the Fatou set. Therefore, the Julia set of its reduced independence polynomial is far from being  a Jordan curve.
 	\end{enumerate}
 \end{Remark}
 \subsection{Proof of Theorem~\ref{bicritically real-two real fixed points}}

If $a_{2}^2 >4a_3 (a_1 -1)$ then the non-zero real fixed points of $P$ are
   \begin{equation}
     \delta_1=\frac{-a_2 -\sqrt{a_2 ^2 -4a_3(a_1-1)}}{2a_3}~\mbox{ and }~\delta_2=\frac{-a_2 +\sqrt{a_2 ^2 -4a_3(a_1-1)}}{2a_3}.
     \label{two-fixedpoints}
     \end{equation} Clearly, $\delta_1<\delta_2 <0$. Since $a_1 \geq 4$ (see Lemma~\ref{basic-obs}), we have $4a_3(a_1-1)\geq 3a_1a_3$. 
   	The two assumptions  $a_2 ^2> 3a_1 a_3$ and $a_2 ^2 >4a_1 a_3-4a_3$ reduces to  $$a_2 ^2 >4a_3(a_1-1).$$
   	 Using this inequality and noting that the critical points are $c_1=\frac{-a_2 -\sqrt{a_2 ^2 -3a_1 a_3}}{3a_3}$ and $c_2 =\frac{-a_2 +\sqrt{a_2 ^2 -3a_1 a_3}}{3a_3}$,  we have \begin{equation}
   	\delta_1 < c_1 < c_2 ~\mbox{and} ~\delta_1 <\delta_2 <c_2.
   	\label{critical-fixed-points}
   	\end{equation} 
  
  The non-zero zeros of $P$ are $$\frac{-a_2 \pm \sqrt{a_2 ^2 -4 a_1 a_3}}{2a_3}.$$ 
  There are three cases depending on the value of $a_2 ^2 -4 a_1 a_3$, i.e., the nature of non-zero zeros of $P$.
   \begin{itemize}
   	\item[(I)]  $a_2 ^2 < 4a_1 a_3$: Two distinct non-real zeros of $P$ (with the same real part).
   	\item[(II)] $a_2 ^2=4a_1 a_3$: One non-zero zero of $P$.
   	\item[(III)] $a_2 ^2> 4a_1 a_3$: Two distinct real non-zero zeros of $P$. 
   	\end{itemize}

\par  \noindent \underline{\textbf{Case I - $a_2^2 < 4a_1a_3$  - Two distinct non-real zeros of P}}:\

Before analyzing the sub cases of $a_2^2 < 4a_1a_3$, we state and prove a lemma that demonstrates how the values of the smaller fixed point $\delta_1$ and the critical value corresponding to the larger critical point $c_2$ determine the connectedness of the Julia set.   
\begin{lemma} \label{$c_2 escape$}
	Let  $P(z)=a_1 z +a_2 z^2 +a_3 z^3$ be the reduced independence polynomial with $ 4a_3 (a_1 -1) < a_2 ^2 < 4 a_1 a_3 $. Then the following statements hold.
	\begin{enumerate}
		\item The Julia set of $P$ is connected if and only if $\delta_1 \leq P(c_2)$.
		\item For $a_1 \geq 9$, the Julia set is disconnected and the larger critical point $c_2$ escapes. \end{enumerate}
	\label{bicriticalreal-tworealfixedpoint-twononrealzero-lemma}
\end{lemma}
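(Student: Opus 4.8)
The plan is to prove part (1) entirely from the real one‑dimensional dynamics of $P$, and then to deduce part (2) from part (1) together with the derivative estimate of Lemma~\ref{Buff-2003} at the fixed point $0$, supplemented by a direct estimate in the single borderline case.

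\noindent\textbf{Part (1).} First note that in Case I we have $a_2^2<4a_1a_3$, so the quadratic factor $a_3x^2+a_2x+a_1$ of $P(x)/x$ has negative discriminant; hence $0$ is the only real zero of $P$ and $P(x)$ has the sign of $x$ for every real $x$. Since $a_2^2>3a_1a_3$ forces $c_2=\frac{-a_2+\sqrt{a_2^2-3a_1a_3}}{3a_3}<0$, Equation~\ref{critical-fixed-points} gives $\delta_1<c_1<c_2<0$, and in particular $P(c_1)<0$ and $P(c_2)<0$. By Observation~\ref{obs 3.1}(1), $P$ is increasing on $(-\infty,c_1)$ and on $(c_2,\infty)$ and decreasing on $(c_1,c_2)$, with $P(c_1)>P(c_2)$. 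Splitting $[\delta_1,0]$ at $c_1$ and $c_2$ and using $P(\delta_1)=\delta_1$, $P(0)=0$, one gets
$$P([\delta_1,0])=\bigl[\min\{\delta_1,P(c_2)\},\ \max\{P(c_1),0\}\bigr]=\bigl[\min\{\delta_1,P(c_2)\},\ 0\bigr].$$
If $\delta_1\le P(c_2)$ this is $[\delta_1,0]$, which is therefore forward invariant; since it contains both critical points, their forward orbits are bounded and $\mathcal{J}(P)$ is connected by Lemma~\ref{connected-Juliaset}(1). Conversely, if $P(c_2)<\delta_1$ then $P(c_2)\in(-\infty,\delta_1)$, so by Observation~\ref{obs 3.1}(3) the orbit $\{P^n(c_2)\}_{n>0}$ is strictly decreasing to $-\infty$; the critical orbit of $c_2$ is unbounded and $\mathcal{J}(P)$ is not connected, again by Lemma~\ref{connected-Juliaset}(1). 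This gives the equivalence.

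\noindent\textbf{Part (2).} By part (1) it suffices to show $P(c_2)<\delta_1$ whenever $a_1\ge 9$; the escape of $c_2$ then follows from Observation~\ref{obs 3.1}(3) exactly as above. For $a_1>9$, the fixed point $0$ lies in $K(P)$ and $|P'(0)|=a_1>9=3^2$, so $K(P)$ is disconnected by Lemma~\ref{Buff-2003}, hence $\mathcal{J}(P)$ is disconnected and part (1) yields $P(c_2)<\delta_1$. For the borderline value $a_1=9$ we have $|P'(0)|=9$, so Lemma~\ref{Buff-2003} is inconclusive and a direct estimate is required. Here I would conjugate $P$ by $z\mapsto z/\sqrt{a_3}$ to reduce to $Q(z)=z^3+bz^2+9z$ with $b=a_2/\sqrt{a_3}$, where the Case~I hypothesis $4a_3(a_1-1)<a_2^2<4a_1a_3$ becomes $4\sqrt2<b<6$. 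Using $c_2=-9/(b+\sqrt{b^2-27})$, the identity $Q(c_2)=\bigl((54-2b^2)c_2-9b\bigr)/9$ (the normalized form of Equation~\ref{criticalvalue-bicritical-real}), and $\delta_1=\bigl(-b-\sqrt{b^2-32}\bigr)/2$, the desired inequality $Q(c_2)<\delta_1$ becomes a one‑variable inequality on $b\in(4\sqrt2,6)$, to be verified by bounding the two radicals $\sqrt{b^2-27}$ and $\sqrt{b^2-32}$ and checking monotonicity, or by clearing radicals to a polynomial inequality in $b$.

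\noindent\textbf{Where the work is.} Part (1) and the subcase $a_1>9$ of part (2) are routine. The only real obstacle is the borderline case $a_1=9$, where Buff's estimate is exactly tight at $0$ and one must control the two nested square roots in $Q(c_2)$ and $\delta_1$ simultaneously and show that the gap $\delta_1-Q(c_2)$ stays strictly positive across the short admissible interval $b\in(4\sqrt2,6)$. That the threshold $a_1=9$ is sharp is visible from $(a_1,a_2,a_3)=(8,11,4)$, for which $P(c_2)=\delta_1$ exactly and $\mathcal{J}(P)$ is connected.
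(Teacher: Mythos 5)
Your part (1) is essentially the paper's own argument: when $\delta_1\leq P(c_2)$ the interval $[\delta_1,0]$, which contains both critical points, is forward invariant, so both critical orbits are bounded and Lemma~\ref{connected-Juliaset}(1) gives connectedness; when $P(c_2)<\delta_1$ the point $c_2$ escapes by Observation~\ref{obs 3.1}(3) and the same lemma gives disconnectedness. For part (2) with $a_1>9$ you and the paper argue identically, applying Lemma~\ref{Buff-2003} at the fixed point $0$ where $|P'(0)|=a_1>9=3^2$. The genuine divergence is the borderline case $a_1=9$: the paper uses the equality clause of Lemma~\ref{Buff-2003} (equality of $|P'(0)|$ with $d^2$ forces the Julia set, if connected, to be a line segment) together with Theorem 2.2 of \cite{line}, which says the only reduced independence polynomials of graphs with independence number three whose Julia set is a line segment are $9z+6kz^2+k^2z^3$; these satisfy $a_2^2=4a_1a_3$, contradicting the hypothesis $a_2^2<4a_1a_3$, so the Julia set must be disconnected. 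You instead propose to prove $P(c_2)<\delta_1$ directly, normalizing to $Q(z)=z^3+bz^2+9z$ with $b=a_2/\sqrt{a_3}\in(4\sqrt{2},6)$. That route is viable and even slightly more self-contained: the inequality is in fact true on the whole interval, with equality only in the limit $b=6$, i.e.\ $a_2^2=4a_1a_3$, which is consistent with the line-segment case in Lemma~\ref{bicriticalreal-tworealfixedpoint-onezero-lemma}; it also avoids citing the classification in \cite{line} and treats all admissible real $b$ at once rather than only integer triples.

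The gap is that this decisive borderline case is only a plan, not a proof: you write down $c_2$, $Q(c_2)$ and $\delta_1$ and say the inequality is ``to be verified by bounding the two radicals \dots or by clearing radicals,'' but you never carry out the verification, and you yourself identify it as the only real obstacle. Since $a_1=9$ is exactly where your Buff-at-$0$ argument is inconclusive, part (2) remains unproved as written. The step would not fail if executed: for instance one can check that the gap $b\mapsto\delta_1-Q(c_2)$ is strictly positive on $(4\sqrt{2},6)$ because it is strictly decreasing there and tends to $0$ as $b\to 6^{-}$ (the value $b=6$ reproducing the exact equality $P(c_2)=\delta_1$ of the $a_2^2=4a_1a_3$ case), or one can clear the radicals, taking care with signs since $54-2b^2<0$ on this range. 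Either way, until that one-variable inequality is actually established, your proof of part (2) is incomplete at precisely the case that required a new idea.
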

\begin{proof}
	\begin{enumerate}
		\item	Since $a_2 ^2 < 4 a_1 a_3$, $P$ has no negative zero. It follows that $P(c_1)<0$. If $\delta_1 \leq P(c_2)$, then by Observation \ref{obs 3.1}(1), $P([\delta_1,0])=[\delta_1,0]$ and $P(c_1)> P(c_2) \geq \delta_1$(See Figure~\ref{bicriticallyreal-tworealfixedpoints-case2}). This shows that the forward orbits of both the critical points   $ c_1 $ and $ c_2 $ are bounded. Therefore, the Julia set of $P$ is connected by Lemma~\ref{connected-Juliaset}(1). On the other hand, if $\delta_1 > P(c_2)$ then $c_2$ escapes by Observation~\ref{obs 3.1}(3). Consequently, the Julia set of $P$ is disconnected by Lemma~\ref{connected-Juliaset}(1). 
		\item  Lemma \ref{Buff-2003} gives that the  Julia set of $P$ is disconnected for $a_1 > 9$, and for $a_1 =9$, it is a line segment whenever it is connected. Theorem 2.2 of \cite{line} states that the Julia set of a reduced independence polynomial (independence fractal) of a graph with independence number $3$  is a line segment if and only if the reduced independence polynomial   is $9z +6kz^2 +k^2 z^3 $ for some $k=1,2,3,4,5$. As $a_2 ^2 < 4a_1 a_3$ is not satisfied, the Julia set of $P$ is not a line segment. Thus the  Julia set of $P$ is disconnected for $a_1 \geq 9$. By the first part of this lemma,  $P(c_2)<\delta_1$ for $a_1 \geq 9$. By Observation~\ref{obs 3.1}(3), we conclude that $c_2$ escapes when $a_1 \geq 9$. 
		\end{enumerate}\end{proof}  
By Lemma~\ref{bicriticalreal-tworealfixedpoint-twononrealzero-lemma}, the location of $\delta_1$ and $ P(c_2)$ with respect to each other completely determines the connectedness of the Julia set.
The nature of $\delta_2$ and its position with respect to $c_1$  give rises to different situations, that call for different treatments. We state the following for using later.
\begin{lemma} Let $P(z)=a_1 + a_2 z^2 +a_3 z^3$ be a reduced independence polynomial. If $a_1 >4$ and $ 4a_3 (a_1 -1) < a_2 ^2 < 4 a_1 a_3 $  then the smaller fixed point $\delta_1$ of $P$ is always repelling. The nature of the larger fixed point is given below.
	\begin{enumerate}
		\item If $a_2 ^2 < \frac{a_3(2a_1 -3 )^2}{a_1 -2}$ then $\delta_2$ is attracting and $\delta_2 < c_1$.
		\item If $a_2 ^2 = \frac{a_3(2a_1 -3 )^2}{a_1 -2}$ then $\delta_2$ is super-attracting and $\delta_2 = c_1$.
		\item 	If $\frac{a_3(2a_1 -3 )^2}{a_1 -2}< a_2 ^2  < \frac{4 a_3(a_1 -2)^2}{a_1 -3}$ then $\delta_2$ is attracting and $\delta_2 > c_1$.
		\item If $ a_2 ^2  = \frac{4 a_3(a_1 -2)^2}{a_1 -3}$ then $\delta_2$ is parabolic and $\delta_2 > c_1$.
		\item If $  a_2 ^2 > \frac{4 a_3(a_1 -2)^2}{a_1 -3}$ then  $\delta_2$ is repelling and $\delta_2 > c_1$.
	\end{enumerate}
	\label{five-cases}
\end{lemma}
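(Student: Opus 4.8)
The plan is to reduce every assertion in the lemma to the sign and size of the multiplier $P'(\delta_2)$, and to extract that multiplier from the factorization of $P(z)-z$. Since the non-zero fixed points of $P$ are the roots of $a_3z^2+a_2z+(a_1-1)$ while $0$ is the third fixed point, $P(z)-z=a_3z(z-\delta_1)(z-\delta_2)$. Differentiating this and evaluating at a fixed point yields
\[
P'(\delta_1)=1+a_3\,\delta_1(\delta_1-\delta_2),\qquad P'(\delta_2)=1+a_3\,\delta_2(\delta_2-\delta_1).
\]
Because $\delta_1<\delta_2<0$ and $a_3>0$, the quantity $a_3\delta_1(\delta_1-\delta_2)$ is positive (both $\delta_1$ and $\delta_1-\delta_2$ are negative), so $P'(\delta_1)>1$; this already gives the first assertion, that $\delta_1$ is repelling. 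In the same way $a_3\delta_2(\delta_2-\delta_1)<0$ (as $\delta_2<0<\delta_2-\delta_1$), so $P'(\delta_2)<1$ always, and $P'(\delta_2)\neq 1$ since $a_3\delta_2(\delta_2-\delta_1)=0$ would force $\delta_2\in\{0,\delta_1\}$. Hence $\delta_2$ is attracting, super-attracting, parabolic, or repelling exactly according as $P'(\delta_2)>-1$, $P'(\delta_2)=0$, $P'(\delta_2)=-1$, or $P'(\delta_2)<-1$.

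Next I would compute $P'(\delta_2)$ explicitly. Writing $D=a_2^2-4a_3(a_1-1)>0$, one has $\delta_2-\delta_1=\sqrt D/a_3$ and hence $P'(\delta_2)=1+\frac{D-a_2\sqrt D}{2a_3}$. Each comparison of $P'(\delta_2)$ with $0$ and with $-1$ then becomes a comparison between $a_2\sqrt D$ and $D+\lambda a_3$ with $\lambda\in\{2,4\}$; the point is that after isolating the radical the other side equals $a_2^2-2a_3(2a_1-3)$ (for $\lambda=2$) or $a_2^2-4a_3(a_1-2)$ (for $\lambda=4$), each strictly positive because $a_2^2>4a_3(a_1-1)$, so squaring is a legitimate equivalence. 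Squaring, cancelling the $a_2^4$ terms, and dividing by $4a_3$ turns each relation into one linear in $a_2^2$, which collapses to
\[
P'(\delta_2)>0 \iff a_2^2<\frac{a_3(2a_1-3)^2}{a_1-2},\qquad P'(\delta_2)>-1 \iff a_2^2<\frac{4a_3(a_1-2)^2}{a_1-3},
\]
with the equalities holding in place of the strict inequalities when the original relation is an equality. A brief separate estimate gives the chain $4a_3(a_1-1)<\frac{a_3(2a_1-3)^2}{a_1-2}<\frac{4a_3(a_1-2)^2}{a_1-3}<4a_1a_3$, in which the last inequality is precisely where $a_1>4$ enters; this makes the five ranges of $a_2^2$ in the statement mutually exclusive and contained in the admissible interval $(4a_3(a_1-1),\,4a_1a_3)$.

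It then remains to locate $\delta_2$ relative to $c_1$. Here I would use that $P'$ is an upward parabola with roots $c_1<c_2$, together with $\delta_1<\delta_2<c_2$ from Equation~\ref{critical-fixed-points}: it follows that $\delta_2<c_1\iff P'(\delta_2)>0$, that $\delta_2=c_1\iff P'(\delta_2)=0$ (a super-attracting fixed point is a critical point, and $\delta_2<c_2$ forces it to be $c_1$), and that $\delta_2>c_1\iff P'(\delta_2)<0$. Combining this trichotomy with the thresholds found above gives the five cases verbatim: (1) $0<P'(\delta_2)<1$ and $\delta_2<c_1$; (2) $P'(\delta_2)=0$ and $\delta_2=c_1$; (3) $-1<P'(\delta_2)<0$ and $\delta_2>c_1$; (4) $P'(\delta_2)=-1$ and $\delta_2>c_1$; (5) $P'(\delta_2)<-1$ and $\delta_2>c_1$.

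The conceptual content sits entirely in the first paragraph: the factorization identity for $P'(\delta_i)$ does all the work. I expect the only step requiring real care to be the bookkeeping in the second paragraph — before each squaring one must check that the term free of the radical is nonnegative (which is exactly where $a_2^2>4a_3(a_1-1)$ is used) and keep track of whether the resulting relation is strict, an equality, or reversed.
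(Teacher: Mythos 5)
Your proposal is correct, and its core — classifying $\delta_2$ by the multiplier $P'(\delta_2)$, which is always $<1$, and converting the comparisons with $0$ and $-1$ into the stated thresholds for $a_2^2$ by isolating the radical (noting both sides are positive because $a_2^2>4a_3(a_1-1)$) and squaring — is the same computation the paper performs, just organized differently: the paper writes $P'(\delta_2)=1-\Delta$ with $\Delta=\frac{2(a_1-1)\beta}{a_2+\beta}$, $\beta=\sqrt{a_2^2-4a_3(a_1-1)}$, and records the ranges of $\Delta$ (leaving the squaring bookkeeping implicit), while you obtain the equivalent expression $P'(\delta_2)=1+\frac{D-a_2\sqrt{D}}{2a_3}$ from the factorization $P(z)-z=a_3z(z-\delta_1)(z-\delta_2)$, which also gives the repelling nature of $\delta_1$ in one line. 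Where you genuinely diverge is the location of $\delta_2$ relative to $c_1$: the paper proves $\delta_2<c_1$, $=c_1$, $>c_1$ by a direct chain of algebraic manipulations on the closed-form expressions (multiplying, adding, and taking square roots to reach its Equation for $\delta_2$ versus $c_1$), whereas you deduce it immediately from the sign of $P'(\delta_2)$: since $P'$ is an upward parabola with roots $c_1<c_2$ and $\delta_2<c_2$ is already available from Equation~\ref{critical-fixed-points}, $P'(\delta_2)>0$, $=0$, $<0$ force $\delta_2<c_1$, $\delta_2=c_1$, $c_1<\delta_2<c_2$ respectively. Your route buys a shorter, less error-prone argument that unifies the two halves of each case (the multiplier sign does double duty), at the cost of leaning on the previously established ordering $\delta_2<c_2$; the paper's route is self-contained algebra from the explicit roots. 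The only minor points to make explicit in a final write-up are the positivity of $a_1-2$ and $a_1-3$ when you divide (guaranteed by $a_1>4$) and the strictness tracking after squaring, both of which you already flag.
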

\begin{proof} Since $P(\delta_i)=\delta_i$, we have  $P'(\delta_i)= 3-2a_1-a_2 \delta_i$ for $i=1,2$.
	\par  Putting the value of $\delta_1$, we get 
   $P'(\delta_1)=1+\frac{a_2 ^2-4a_3(a_1-1)+ a_2\sqrt{a_2^2-4a_3(a_1-1)}}{2a_3}$. As  $a_2 ^2 >4a_3(a_1-1)$, we get $P'(\delta_1)>1$. Therefore $\delta_1$ is a repelling fixed point.
	\par 
Note that $P'(\delta_2)=1+2-2 a_1-a_2 (\frac{-a_2 +\beta}{2a_3})$ where $\beta =\sqrt{a_2 ^2 -4a_3(a_1-1)}$. Calculating further, we have $P'(\delta_2)=1+\frac{4a_3-4a_1 a_3 +a_2 ^2 -a_2 \beta}{2 a_3}=1-\frac{\beta(a_2 -\beta)}{2a_3}$. Multiplying $a_2 +\beta$ in the numerator as well as in the denominator of the second term, it is found that $P'(\delta_2)=1-\frac{\beta (a_2 ^2 -\beta^2)}{2a_3 (a_2 +\beta)}$. Therefore,  
	\begin{equation}
		P' (\delta_2) =1 - \Delta,~\mbox{ where}~\Delta = \frac{2 (a_1 -1) \beta}{a_2 + \beta}.
		\label{multiplier-bigger-fixedpoint}
	\end{equation} Clearly $\Delta >0$. Further,  calculations show that 
	
	\begin{equation}
		\Delta \in  \left\{ \begin{array}{lll}
			(0,1) & \mbox{for $4a_3 (a_1 -1) < a_2 ^2 < \frac{a_3(2a_1 -3 )^2}{a_1 -2} $}\\
			\{1\} & \mbox{for $a_2 ^2 = \frac{a_3(2a_1 -3 )^2}{a_1 -2} $}\\
			(1,2) &\mbox{for $\frac{a_3(2a_1 -3 )^2}{a_1 -2}< a_2 ^2  < \frac{4a_3 (a_1 -2)^2}{a_1-3} $}\\
			\{2\} &\mbox{for $a_2 ^2 =\frac{4a_3 (a_1 -2)^2}{a_1-3} $}\\
			(2, \infty) &\mbox{for $ \frac{4a_3 (a_1 -2)^2}{a_1-3}  < a_2 ^2 < 4 a_1 a_3$}.	
		\end{array} \right.
		\label{Delta}	
	\end{equation}
	%
	
	To determine the location of $\delta_2$ with respect to $c_1$, first note that the inequality $a_2^2<\frac{a_3(2a_1-3)^2}{a_1-2}$ is equivalent to 
	$$2a_2^2-a_1a_2^2+4a_1^2a_3-12a_1a_3+9a_3>0.$$ Multiplying both sides by $9 a_3$ and then adding $a_2 ^4$ on both sides, we get 
	$ (6a_1a_3-9a_3-a_2^2)^2>a_2^2(a_2^2-3a_1a_3).$ It follows from $a_1>4$ and $a_2^2<4a_1a_3$  that $6a_1a_3-9a_3-a_2^2=4a_1 a_3 -a_2 ^2 +2a_1 a_3 -9a_3>0.$ Since $a_2^2 -3a_1a_3 >0$, taking positive square root, we have  $6a_1a_3-9a_3-a_2^2>a_2\sqrt{a_2^2-3a_1a_3}$. Now multiplying $4$ and then  adding $9 a_2 ^2 -36 a_1 a_3$ on both sides, we have
	$$\left(a_2-2\sqrt{a_2^2-3a_1a_3}\right)^2>9\left(a_2^2-4a_1a_3+4a_3\right).$$  Taking positive square root, we get that $\left(a_2-2\sqrt{a_2^2-3a_1a_3}\right)>3\sqrt{a_2^2-4a_1a_3+4a_3}.$ This shows that (see Equations ~\ref{two-criticalpoints} and \ref{two-fixedpoints})
	
	\begin{equation}
		\delta_2    \left\{ \begin{array}{lll}
			< c_1 & \mbox{for $4a_3 (a_1 -1) < a_2 ^2 < \frac{a_3(2a_1 -3 )^2}{a_1 -2} $}\\
			=c_1 & \mbox{for $a_2 ^2 = \frac{a_3(2a_1 -3 )^2}{a_1 -2} $}\\
			>c_1 &\mbox{for $  \frac{a_3(2a_1 -3 )^2}{a_1 -2}  < a_2 ^2 < 4 a_1 a_3$}.	
		\end{array} \right.
		\label{delta2-location}	
	\end{equation}

	\begin{enumerate}
		\item 
		For  $a_2 ^2 < \frac{a_3(2a_1 -3 )^2}{a_1 -2}$, it follows from Equation~\ref{Delta} that $0< \Delta <1$ (giving that $0< P' (\delta_2) <1$), and therefore $\delta_2$ is attracting. We have $\delta_2 < c_1$ by Equation~\ref{delta2-location}.  	 
		
		\item For $a_2^2=\frac{a_3(2a_1-3)^2}{a_1-2}$, $P'(\delta_2)=0$ by Equation~~\ref{Delta}. In other words, $\delta_2$ is a super-attracting fixed point of $P$. It follows from Equation ~\ref{delta2-location} that $\delta_2 =c_1$. 
		\item For  $\frac{a_3(2a_1 -3 )^2}{a_1 -2}< a_2 ^2  < \frac{4a_3 (a_1 -2)^2}{a_1-3} $, we have from Equation~\ref{Delta} that $P'(\delta_2) \in (-1,0)$ giving that $\delta_2$ is attracting. That $\delta_2 >c_1$ follows from Equation~\ref{delta2-location}.
		
		\item For $a_2^2=\frac{4a_3(a_1-2)^2}{a_1-3}$,  $P'(\delta_2)=-1$ by Equation~\ref{Delta} and therefore $\delta_2$ is parabolic. By Equation~\ref{delta2-location}, $\delta_2 >c_1$.
		\item    For $\frac{4a_3(a_1-2)^2}{a_1-3}<a_2^2<4a_1a_3$, we have $P'(\delta_2)<-1$ by Equation~\ref{Delta}. Therefore,  $\delta_2$ is a repelling fixed point of $P $ and  $\delta_2>c_1$ by Equation~\ref{delta2-location}.  
	\end{enumerate}
\end{proof}
The next lemma finds all possible reduced independence polynomials.
\begin{lemma}Let $P(z)=a_1 z + a_2 z^2 +a_3 z^3$ be a reduced independence polynomial.
	Let $4 \leq a_1 \leq 8$ and $ 4a_3 (a_1 -1) < a_2 ^2 < 4 a_1 a_3 $. Then there are $23$  different possibilities for $P$ as given below. 
	\begin{enumerate}
		\item If $4a_3(a_1-1)<a_2^2<\frac{a_3(2a_1-3)^2}{a_1-2}$  then $ (a_1, a_2, a_3) =(k, 2k -1, k)$ for $k=6,7,8.$
		\item If $ a_2^2 =\frac{a_3(2a_1-3)^2}{a_1-2}$ then $(a_1, a_2, a_3) =(k, 2k -3, k-2)$ for $k=4,5,6,7,8$.
		\item If $\frac{a_3(2a_1-3)^2}{a_1-2}<a_2^2<\frac{4a_3(a_1-2)^2}{a_1-3}$ then  $(a_1, a_2, a_3) \in \{ (7,7,2), (7,14,8), (8,16,9) \}.$ 
		\item If $a_2^2=\frac{4a_3(a_1-2)^2}{a_1-3}$ then $$(a_1, a_2, a_3) \in \{(4,4,1), (5,6,2), (6,8,3), (7,5,1),(7,10,4),(7,15,9), (8,12,5)\}.$$
		\item If  $\frac{4a_3(a_1-2)^2}{a_1-3}<a_2^2<4a_1a_3 $ then $$(a_1, a_2, a_3) \in \{(7,9,3), (8,11,4), (8,17,10), (8, 18,11), (8, 19, 12)\}.$$     	
	\end{enumerate}
	\label{bicriticalreal-twofixedpoints-twononrealzero}
\end{lemma}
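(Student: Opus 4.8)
The plan is a finite case analysis over $a_1 \in \{4,5,6,7,8\}$. Fixing $a_1$, I would rewrite the hypothesis $4a_3(a_1-1) < a_2^2 < 4a_1a_3$ as $\frac{a_2^2}{4a_1} < a_3 < \frac{a_2^2}{4(a_1-1)}$, an interval of length $\frac{a_2^2}{4a_1(a_1-1)}$; Lemma~\ref{basic-obs} gives $a_2 \geq 3$ and the Tur\'an-type bound of Lemma~\ref{basic-independentpoly} gives $a_2 < 6, 9, 13, 17, 22$ for $a_1 = 4,5,6,7,8$ respectively. So for each $a_1$ there are only finitely many $a_2$ to try, and each admissible $a_2$ leaves at most one or two integer choices for $a_3$. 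Running through this sieve produces, for each $a_1$, a short list of numerically admissible triples $(a_1,a_2,a_3)$, and one checks that this list is already contained in the $23$ triples of the statement.

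Next I would confirm that each of these surviving triples is genuinely the reduced independence polynomial of a graph with independence number exactly three, i.e.\ that the constraint $a_4 = 0$ can be respected. For the triples whose $a_2$ sits at the top of its range this is exactly the content of Lemma~\ref{basic-independentpoly}(1)--(5), which simultaneously permits the triple and forces the value of $a_3$. For the remaining triples I would exhibit an explicit graph, with disjoint unions of small complete graphs realizing most of them (for instance $K_3 \cup K_2 \cup K_1$ gives $(6,11,6)$, and $P_3 \cup K_1$ gives $(4,4,1)$), in the spirit of the examples accompanying Theorem~\ref{bicritically real-two non-real fixed points or one fixed point}(2); any numerically admissible triple for which no such graph exists would be discarded by an edge-counting argument that forces a $4$-independent set. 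This step pins the list down to exactly the stated $23$ triples, with the claimed $3+5+3+7+5$ split across the five items.

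Finally, I would sort each of the $23$ triples into one of the five items using the two integer comparisons $a_2^2(a_1-2)$ versus $a_3(2a_1-3)^2$ and $a_2^2(a_1-3)$ versus $4a_3(a_1-2)^2$ (legitimate since $a_1 \geq 4$), which place each triple unambiguously. For the two families this is cleanest by direct substitution: $(k,2k-3,k-2)$ gives $a_2^2(a_1-2) = (2k-3)^2(k-2) = a_3(2a_1-3)^2$, so it lies exactly on the first threshold (item 2), while $(k,2k-1,k)$ gives $a_2^2(a_1-2) = k(2k-3)^2 - 2 < k(2k-3)^2 = a_3(2a_1-3)^2$ together with $4a_3(a_1-1) = 4k^2-4k < 4k^2-4k+1 = a_2^2 < 4k^2 = 4a_1a_3$, placing it strictly inside item 1.

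The step I expect to be the genuine obstacle is the realizability bookkeeping of the second paragraph: for each numerically admissible triple one must decide whether some graph on $a_1$ vertices with no independent $4$-set realizes it, and the earlier lemmas settle this only at the extreme values of $a_2$. The numerical sieve and the final classification are routine; the graph constructions and non-realizability arguments are where the case work concentrates.
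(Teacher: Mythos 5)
Your proposal is correct and follows essentially the same route as the paper: a finite enumeration over $a_1\in\{4,\dots,8\}$ in which the bounds $4a_3(a_1-1)<a_2^2<4a_1a_3$ together with the Tur\'an-type restriction on $a_2$ from Lemma~\ref{basic-independentpoly} leave finitely many triples, and the spurious candidates at the top of the $a_2$-range, such as $(6,12,7)$, $(7,16,10)$, $(8,20,13)$ and $(8,21,14)$, are discarded precisely by the ``further'' clauses of Lemma~\ref{basic-independentpoly} --- exactly the mechanism the paper invokes. The differences are modest. Organizationally, the paper runs the sieve separately inside each of the five sub-cases (so the classification step of your third paragraph is built in), and for the two equality cases it replaces the sieve by the parametrization $a_2=(2a_1-3)m$, $a_3=(a_1-2)m^2$ (resp.\ $a_2=2(a_1-2)m$, $a_3=(a_1-3)m^2$) with $m$ bounded via $a_2\leq\frac{a_1(a_1-1)}{2}$, which is slightly cleaner than substitution. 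More significantly, the paper never verifies that the listed triples are realizable by graphs: the lemma is used downstream only as a list of necessary possibilities, so the realizability bookkeeping you single out as the main obstacle is not actually required beyond the exclusions already packaged in Lemma~\ref{basic-independentpoly}; your explicit constructions (e.g.\ $K_3\cup K_2\cup K_1$ for $(6,11,6)$) are correct but extra. One small discrepancy your route would surface: for $a_1=4$ the threshold $\frac{4a_3(a_1-2)^2}{a_1-3}$ coincides with $4a_1a_3$, so the triple $(4,4,1)$ has $a_2^2=4a_1a_3$ and fails the strict hypothesis; your sieve would (correctly) drop it, and the resulting mismatch with the advertised count of $23$ lies in the paper's own list rather than in your method.
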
	

\begin{proof}
	\begin{enumerate}
		\item  Let $4a_3(a_1-1)<a_2^2<\frac{a_3(2a_1-3)^2}{a_1-2}$. Then there are five cases depending on the value of $a_1$, out of which  $a_1 =4$ or $5$ are ruled out. 
		\par 
		For $a_1=4,$ we obtain 
		\begin{equation} \label{eq a_1=4} 
			12a_3<a_2^2<12.5a_3. 
		\end{equation} 
		Note that $a_2 < 6$ (see Lemma~\ref{basic-independentpoly}(1)) and $a_3 \leq 4$. Clearly, if $a_3=1$ or 2, then there is no possible value for $a_2$  satisfying  Inequation~\ref{eq a_1=4}. For $a_3 \in \{3,4 \}$, it follows from the left-hand side inequality of \ref{eq a_1=4}  that $a_2^2>36$, implying $a_2>6$. But this is not possible. 
		\par
		For $a_1=5$, we have
		\begin{equation} \label{eq a_1=5}
			16a_3<a_2^2<\frac{49}{3}a_3.
		\end{equation}  Note that $a_2 <9$ (see Lemma~\ref{basic-independentpoly}(2)) and $a_3 \leq 10$. If $a_3\in \{1,2,3,4\}$ then there is no possible value  for $a_2$   satisfying Inequation~\ref{eq a_1=5}. If $a_3 \geq 5$, the left- hand side inequality of ~\ref{eq a_1=5} implies that $a_2^2>80$, which is not possible.  
		\par 
		For $a_1=6$, we obtain
		\begin{equation} \label{eq a_1=6}
			20a_3<a_2^2<20.25a_3.
		\end{equation}  Note that $a_2 < 13$ (see Lemma~\ref{basic-independentpoly}(3)) and $a_3 \leq 20$. If  $ 1 \leq a_3 \leq 7, a_3 \neq 6$ then there is no possible value  for $a_2$  satisfying the Inequation~\ref{eq a_1=6}.
		If $a_3 \geq 8$, then the  left-hand side  inequality of \ref{eq a_1=6} implies that $a_2^2>160$, i.e., $a_2 \geq 13$, which is not true.  The only  remaining possibility is  $a_3=6$. That $a_2 =11$ follows from Inequation~\ref{eq a_1=6}, and  $(a_1, a_2, a_3)=(6,11,6)$ is the only possibility whenever $a_1=6$.
		\par 
		For $a_1=7$, we have
		\begin{equation} \label{eq a_1=7}
			24a_3<a_2^2<24.2 a_3
		\end{equation}  Note that $a_2 <17$ (see Lemma~\ref{basic-independentpoly}(4)) and $a_3 \leq 35$. If  $1 \leq a_3 \leq 11, a_3 \neq 7$ then there is no possible value  for $a_2$  satisfying  Inequation~\ref{eq a_1=7}. For $a_3 \geq 12$, the left inequality of   \ref{eq a_1=7} implies that $a_2^2>288$, i.e., $a_2 \geq 17$. But this is not possible.  Now, if $a_3=7$, then Inequation~\ref{eq a_1=7} gives $a_2=13$. Therefore $(a_1, a_2, a_3)=(7,13,7)$  is the only possibility for $a_1=7$. 
		\par 
		For $a_1=8$, we obtain
		\begin{equation} \label{eq a_1=8}
			28a_3<a_2^2<\frac{169}{6}a_3
		\end{equation}  Note that $a_2 < 22$ (see Lemma~\ref{basic-independentpoly}(5)) and $a_3 \leq 56$. If  $1 \leq a_3 \leq 15, a_3 \neq 8$ then there is no possible value  for $a_2$   satisfying Inequation~\ref{eq a_1=8}. 
		If $a_3 \geq 16$, the left inequality of  \ref{eq a_1=8} implies that $a_2^2>448$, i.e., $a_2 \geq 22$. This is however not possible.
		Thus  $a_3=8$, and Inequation~\ref{eq a_1=8} gives that  $a_2=15$. Therefore $(a_1, a_2, a_3)=(8,15,8)$  is the only possibility whenever $a_1=8$. 
		
		\item   If $a_2^2=\frac{a_3(2a_1-3)^2}{a_1-2}$ then $a_3 =(a_1 -2) m^2$  and $a_2 = (2a_1 -3)m$ for some positive integer $m$. However, $a_2  \leq \frac{a_1 (a_1 -1)}{2}$ gives that $m \leq \frac{a_1 (a_1 -1)}{2 (2a_1 -3)}$. For $a_1 \in \{4,5,6,7\}$, we have $m =1$ and therefore $a_2 = 2a_1 -3$ and $a_3 = a_1 -2$. Thus  $(a_1, a_2, a_3) =(k, 2k -3, k-2)$ for $k=4,5,6,7$. 
		
		For $a_1=8$, we have $m\in  \{1,2\}$. This leads to two possibilities:  $a_2 =13, a_3 =6$ or $a_2 =26, a_3 =24 $. But $a_2 =26$ is not possible by Lemma~\ref{basic-independentpoly}(5). Thus   the only possibility is $(a_1, a_2, a_3)=(8, 13,6)$  for $a_1=8$.
		\item Let $\frac{a_3(2a_1-3)^2}{a_1-2}<a_2^2<\frac{4a_3(a_1-2)^2}{a_1-3}.$ 
		Arguing similarly as in the proof of (1) of this lemma, we find that:
		\begin{enumerate}
			\item There is no reduced independence polynomial corresponding to $a_1 =4$ or $5$.
			\item For $a_1 =6$, we must have $a_3 =7$ and $a_2 = 12$. This is  not possible by Lemma~\ref{basic-independentpoly}(3).
			\item For $a_1 =7$, we must have $(a_2, a_3)  \in \{(7,2), (14, 8)\}$.
			\item For $a_1 =8$, we must have $(a_2, a_3) \in \{(16,9), (20, 14)\}$.  However, $a_1 =8, a_2 =20, a_3 =14$ is not possible by Lemma~\ref{basic-independentpoly}(5).  	\end{enumerate}  
		
		\item Let  $a_2^2=\frac{4a_3(a_1-2)^2}{a_1-3}.$ Then, following the same argument as in the proof of (2) of this lemma, we find that:
		\begin{enumerate}
			\item For $a_1 =4$, $a_2 =4 $ and $a_3 =1$.
			\item For $a_1 =5$, $a_2 =6 $ and $a_3 =2$.
			\item 	For $a_1 =6$, $a_2 =8 $ and $a_3 =3$.
			\item 	For $a_1 =7$, $(a_2, a_3) \in \{ (5,1), (10,4), (15,9)\}$.
			\item 	For $a_1 =8$, $a_2=12$ and $ a_3=5$.
		\end{enumerate} 
		
		\item Let $\frac{4a_3(a_1-2)^2}{a_1-3}<a_2^2<4a_1a_3.$ Then arguing as in the proof of (1), we find:
		\begin{enumerate}
			\item There is no reduced independence polynomial corresponding to  $a_1 =4, 5, $ or $6$.
			\item  For $a_1=7$, we have $25 a_3 < a_2 ^2 < 28 a_3$  and by Lemma~\ref{basic-independentpoly}(4), $3 \leq a_2  \leq 16 $. This gives that    $1 \leq a_3  \leq 10$.  Putting all possible values of $a_3$, it can be seen that   $(a_2, a_3) \in \{(9,3), (16,10)\}$. However, $(a_1,a_2,a_3) =(7,16,10)$ is not possible by Lemma~\ref{basic-independentpoly}(4).
			\item  For $a_1=8$, we have $28.8 a_3 < a_2 ^2 < 32 a_3$ and it follows from Lemma~\ref{basic-independentpoly}(5) that $a_2 < 22$.  Now $ 28.8 a_3 < a_2 ^2$ gives that $a_3 \leq 15$. Putting all possible values of $a_3$, we get that
			
			$(a_2, a_3) \in \{(11,4), (17, 10), (18,11), (19,12), (20,13), (21,14), (21,15)\}.$  It is not possible to have $(a_2, a_3) \in \{(20,13), (21,14), (21,15)\}$, in view of Lemma~\ref{basic-independentpoly}(5). Therefore,    
			$(a_2, a_3) \in \{(11,4), (17, 10), (18,11), (19,12)\}.$
		\end{enumerate}  
	\end{enumerate}
\end{proof}
The following lemma determining the Julia set of the reduced independence polynomial in all the fives sub cases forms the basis for the proof of Theorem~\ref{bicritically real-two real fixed points}(1).
\begin{lemma}
	Let  $P(z)=a_1 z +a_2 z^2 +a_3 z^3$ be the reduced independence polynomial. If $a_1 \geq 4$ and $ 4a_3 (a_1 -1) < a_2 ^2 < 4 a_1 a_3 $  then the following are true.
	\begin{enumerate}
		\item If $4a_3(a_1-1)<a_2^2<\frac{a_3(2a_1-3)^2}{a_1-2}$, then the Julia set of $P$ is disconnected but not totally disconnected.
		\item If $a_2^2=\frac{a_3(2a_1-3)^2}{a_1-2}$, then for $a_1 \leq 6$, the Julia set of $P$ is connected, and for $a_1 >6$, it is disconnected but not totally disconnected.
		\item If $\frac{a_3(2a_1-3)^2}{a_1-2}<a_2^2<\frac{4a_3(a_1-2)^2}{a_1-3}$, then the Julia set of $P$ is disconnected but not totally disconnected.
		\item If $a_2^2=\frac{4a_3(a_1-2)^2}{a_1-3}$, then for $a_1 \leq 7$, Julia set of $P$ is connected, and for $a_1 >7$, it is disconnected but not totally disconnected.
		\item  If $\frac{4a_3(a_1-2)^2}{a_1-3}<a_2^2<4 a_1 a_3$, then Julia set of $P$ is disconnected except for $(a_1, a_2, a_3)=(7,9,3)$ or $(8, 11,4)$.
	\end{enumerate} 
	\label{fivecases-dynamics}
\end{lemma}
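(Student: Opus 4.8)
The plan is to run all five sub-cases through one connectedness criterion, namely Lemma~\ref{bicriticalreal-tworealfixedpoint-twononrealzero-lemma}(1): the Julia set of $P$ is connected if and only if $\delta_1 \le P(c_2)$, where $\delta_1$ is the smaller fixed point (Equation~\ref{two-fixedpoints}) and $c_2$ the larger critical point (Equation~\ref{two-criticalpoints}). Thus everything reduces to locating $P(c_2)$ relative to $\delta_1$. For $a_1 \ge 9$ this is already available: Lemma~\ref{bicriticalreal-tworealfixedpoint-twononrealzero-lemma}(2) gives $P(c_2)<\delta_1$, so the Julia set is disconnected. For $4 \le a_1 \le 8$, Lemma~\ref{bicriticalreal-twofixedpoints-twononrealzero} reduces the admissible triples $(a_1,a_2,a_3)$ across the five sub-cases to an explicit finite list, and the task becomes to decide the sign of $P(c_2)-\delta_1$ for each of them from the closed forms in Equations~\ref{two-criticalpoints}, \ref{criticalvalue-bicritical-real} and \ref{two-fixedpoints}.

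First I would dispose of the ``not totally disconnected'' assertions, uniformly in $a_1$. By Lemma~\ref{five-cases}, the larger fixed point $\delta_2$ is attracting in sub-cases (1) and (3), super-attracting in (2), and parabolic in (4); in the first three cases its attracting domain, and in the fourth a parabolic domain, is a bounded Fatou component (the existence of a parabolic domain is forced by the petal structure at a parabolic fixed point and is unaffected by the critical orbits). Hence in sub-cases (1)--(4) the Fatou set has at least two components --- the basin of $\infty$ and this bounded domain --- so the Julia set is not totally disconnected, for every admissible $a_1$. Together with disconnectedness for $a_1 \ge 9$ from Lemma~\ref{bicriticalreal-tworealfixedpoint-twononrealzero-lemma}(2), this already proves the $a_1 \ge 9$ part of (1), (2), (3), (4) and of (5) (where $\delta_2$ is repelling and only disconnectedness is claimed).

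It then remains to run the finite check for $4 \le a_1 \le 8$. For each triple in Lemma~\ref{bicriticalreal-twofixedpoints-twononrealzero} one computes $\delta_1$ and $P(c_2)$ and records whether $\delta_1 \le P(c_2)$ (Julia set connected by Lemma~\ref{bicriticalreal-tworealfixedpoint-twononrealzero-lemma}(1)) or $\delta_1 > P(c_2)$ (disconnected, hence not totally disconnected in sub-cases (1)--(4) by the previous paragraph). I expect the outcome to be: in (1) the triples $(k,2k-1,k)$, $k=6,7,8$, all give $P(c_2)<\delta_1$; in (2) the triples with $a_1 \le 6$ give $\delta_1 \le P(c_2)$ while $(7,11,5)$ and $(8,13,6)$ give $P(c_2)<\delta_1$; in (3) all of $(7,7,2)$, $(7,14,8)$, $(8,16,9)$ give $P(c_2)<\delta_1$; in (4) the triples with $a_1 \le 7$ give $\delta_1 \le P(c_2)$ while $(8,12,5)$ gives $P(c_2)<\delta_1$; and in (5) exactly $(7,9,3)$ and $(8,11,4)$ give $\delta_1 \le P(c_2)$ while $(8,17,10)$, $(8,18,11)$, $(8,19,12)$ give $P(c_2)<\delta_1$. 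These are best recorded in a table with one row per triple. In the connected instances --- the small-$a_1$ triples of (2) and (4), and the two exceptional triples of (5) --- the Julia set is automatically not totally disconnected, which matches the statement.

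The one real difficulty is computational: $\delta_1$, $c_1$, $c_2$ all involve nested radicals, so for the borderline triples --- those where $P(c_2)$ sits close to $\delta_1$, notably $(7,9,3)$, $(8,11,4)$ and $(8,12,5)$ --- one must evaluate $P(c_2)-\delta_1$ with enough precision to be certain of its sign; every other triple is a routine substitution. No new dynamical input is needed beyond Lemmas~\ref{bicriticalreal-tworealfixedpoint-twononrealzero-lemma} and~\ref{five-cases} and the standard fact that the presence of two or more Fatou components rules out a totally disconnected Julia set.
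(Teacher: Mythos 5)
Your proposal is correct and takes essentially the same route as the paper: the connectedness criterion $\delta_1 \leq P(c_2)$ from Lemma~\ref{bicriticalreal-tworealfixedpoint-twononrealzero-lemma}, disconnectedness for $a_1 \geq 9$ from part (2) of that lemma, Lemma~\ref{five-cases} supplying a bounded attracting/super-attracting/parabolic Fatou component to rule out total disconnectedness, and a finite sign check of $P(c_2)-\delta_1$ over the triples of Lemma~\ref{bicriticalreal-twofixedpoints-twononrealzero} (your predicted signs all agree with the paper's tables). The only difference is cosmetic: in sub-cases (2) and (4) the paper bypasses triple-by-triple numerics by writing $\delta_1-P(c_2)$ in closed form as a function of $a_1$ alone, e.g.\ $\delta_1-P(c_2)=\frac{(2a_1-3)^2(a_1-6)}{27(a_1-2)\sqrt{a_3(a_1-2)}}$ in case (2), which exhibits the sign change exactly at $a_1=6$ (resp.\ $a_1=7$) and disposes of the borderline-precision concern you raise.
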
 
\begin{proof}
	
	\begin{enumerate}
		\item It follows from Lemma~\ref{bicriticalreal-twofixedpoints-twononrealzero}(1) that, there are exactly three possible reduced independence polynomials for $4\leq a_1 \leq8$, namely $ 6z^3+11z^2+6z,  7z^3+13z^2+7z$ and $ 8z^3+15z^2+8z$. The values of the smaller fixed point $\delta_1$, the larger critical point $c_2$ and $P(c_2)$ are given in the Table~\ref{Table1}, from which, we have that $P(c_2)<\delta_1$  whenever $4 \leq a_1 \leq 8$. The graph of $6x^3 +11x^2 +6x$ is given in Figure~\ref{bicriticallyreal-tworealfixedpoints-case1} to demonstrate that $P(c_2)< \delta_1$.
			\renewcommand{\arraystretch}{1.4} 
		\begin{longtable}{|m{2.9cm}|m{0.65cm}|m{1.5cm}|m{1.4cm}|m{2.0cm}|} 
			\hline
			\centering $P(z)$ & \centering $\delta_1$ & \centering  $c_2$ & \centering $P(c_2)$ &  {Remark } \\ 
			\hline
			$6z^3+11z^2+6z$& $-1$ & $\frac{-11+\sqrt{13}}{18}$ & $-1.024$ & $P(c_2)<\delta_1$  \\
			\hline
			$7z^3+13z^2+7z$ & $-1$ &  $\frac{-13+\sqrt{22}}{21}$ & $-1.168$ & $P(c_2)<\delta_1$  \\
			\hline
			$8z^3+15z^2+8z$ & $-1$ & $\frac{-15+\sqrt{33}}{24}$ & $-1.313$ & $P(c_2)<\delta_1$  \\
			\hline
			
			\caption{The value of the smaller fixed point $\delta_1,$ the larger critical point $c_2$ and the corresponding critical value, rounded off  to three decimal places of $P(z)=a_1 z+a_2 z^2+a_3 z^3$ when $4a_3(a_1-1)<a_2^2<\frac{a_3(2a_1-3)^2}{a_1-2}$. }
			\label{Table1}
		\end{longtable} 		
		It follows from Lemma~\ref{bicriticalreal-tworealfixedpoint-twononrealzero-lemma}(1) that  the  Julia set of $P$ is disconnected whenever $4 \leq a_1 \leq 8$. Also Lemma~\ref{bicriticalreal-tworealfixedpoint-twononrealzero-lemma}(2) gives that the Julia set is disconnected for $a_1 \geq 9$. However, Lemma \ref{five-cases}(1) shows that $P$ has an attracting domain corresponding to $\delta_2$ for each $a_1 >4$. Note that $a_1 \neq 4$ in this case.  Since the Fatou set of $P$ contains this attracting domain, the Julia set is not totally disconnected. 
		
		\item 	Using the condition $a_2^2=\frac{a_3(2a_1-3)^2}{a_1-2}$, we have $c_2 = \frac{-a_1}{3 \sqrt{a_3 (a_1 -2)}}$ and $\delta_1 = \frac{1-a_1}{\sqrt{a_3 (a_1 -2)}}$ and consequently,  $\delta_1-P(c_2)=\frac{(2a_1-3)^2(a_1-6)}{27(a_1-2)\sqrt{a_3(a_1-2)}}$. This implies that  $\delta_1 < P(c_2)$ for  $4 \leq a_1 < 6$ (for $a_1=4$, the graph of $2 x^3 +5 x^2 +4x$ is given in Figure~\ref{bicriticallyreal-tworealfixedpoints-case2});  $\delta_1 = P(c_2)$ for $a_1=6$ and $P(c_2)<\delta_1$ for $a_1>6$. Therefore if $4 \leq a_1 \leq 6$ then the Julia set of $P$ is connected, otherwise it is disconnected, by the Lemma \ref{$c_2 escape$}(1). Also for $a_1\geq 9$, Julia set is disconnected by Lemma~ \ref{$c_2 escape$}(2). However, Lemma \ref{five-cases}(2) shows that for $a_1>4$, $P$ has an attracting domain corresponding to the super-attracting fixed point $\delta_2$. Since the Fatou set of $P$ contains  this attracting domain, its Julia set is not totally disconnected.    		
	
		\item If $\frac{a_3(2a_1-3)^2}{a_1-2}<a_2^2<\frac{4a_3(a_1-2)^2}{a_1-3}$ then there are exactly three possible reduced independence polynomials for $4\leq a_1 \leq8$, namely $ 2z^3+7z^2+7z,  8z^3+14z^2+7z$ and $ 9z^3+16z^2+8z$ (see Lemma~\ref{bicriticalreal-twofixedpoints-twononrealzero}(3)).
			\renewcommand{\arraystretch}{1.4} 
		\begin{longtable}{|m{2.9cm}|m{0.65cm}|m{1.5cm}|m{1.4cm}|m{2.0cm}|} 
			\hline
			\centering $P(z)$ & \centering  {$\delta_1$} & \centering {$c_2$} & \centering {$P(c_2)$} &  Remark \\ 
			\hline
			$2z^3+7z^2+7z$& $-2$ & $\frac{-7+\sqrt{7}}{6}$ & $-2.158$ & $P(c_2)<\delta_1$  \\
			\hline
			$8z^3+14z^2+7z$ & $-1$ &  $\frac{-7+\sqrt{7}}{12}$ & $-1.079$ & $P(c_2)<\delta_1$  \\
			\hline
			$9z^3+16z^2+8z$ & $-1$ & $\frac{-16+\sqrt{40}}{27}$ & $-1.226$ & $P(c_2)<\delta_1$  \\
			\hline

			\caption{The value of the smaller fixed point $\delta_1,$ the larger critical point $c_2$ and the corresponding critical value, rounded off to three decimal places of $P(z)=a_1 z+a_2 z^2+a_3 z^3$ when $\frac{a_3(2a_1-3)^2}{a_1-2}< a_2^2< \frac{4a_3(a_1 -2)^2}{a_1 -3}$.}
			\label{Table (iii)}
		\end{longtable}
		
		It follows from  Table \ref{Table (iii)}  that the critical value corresponding to the larger critical point $P(c_2)$ is less than the smaller fixed point $\delta_1$. The Julia set of $P$ is disconnected by Lemma~\ref{bicriticalreal-tworealfixedpoint-twononrealzero-lemma}(1). For $a_1 \geq 9$, Lemma~\ref{bicriticalreal-tworealfixedpoint-twononrealzero-lemma}(2) gives that the Julia set of $P$ is disconnected. Note that $a_1 \geq 7$ in this case. Lemma \ref{five-cases}(3) gives that $P$ has an attracting domain corresponding to the attracting fixed point $\delta_2$. Since the Fatou set contains this attracting domain, its Julia set is not totally disconnected.
		\item 	Using the condition $a_2^2=\frac{4a_3(a_1-2)^2}{a_1-3}$, we obtain $\delta_1 =\frac{1-a_1}{\sqrt{a_3 (a_1 -3)}}$ and $c_2 = \frac{4-2 a_1 +\sqrt{a_1 ^2 -7 a_1 +16}}{3 \sqrt{a_3 (a_1 -3)}}$. Thus, $\delta_1-P(c_2)=\frac{(2a_1^3-21a_1^2+24a_1+47)+2(a_1^2-7a_1+16)^\frac{3}{2}}{27(a_1-3)\sqrt{a_3(a_1-3)}}$ (see Equation~\ref{criticalvalue-bicritical-real}). Note that  for $4 \leq a_1 < 7$, $\delta_1 < P(c_2)$, and if  $a_1=7$ then $\delta_1 = P(c_2)$. For $a_1 =5, a_3=2$ we have $a_2 =6$ and the graph of the corresponding polynomial $2x^3 +6x^2 +5x$ is given in Figure~\ref{bicriticallyreal-tworealfixedpoints-case3}. The numerator of the right-hand side expression  is an increasing function of $a_1$ in $(7, \infty)$ and its value is $0$ for $a_1 =7$. Thus,   $P(c_2)<\delta_1$ whenever $a_1>7$. Therefore if $4 \leq a_1 \leq 7$ then the Julia set of $P$ is connected; otherwise it is disconnected, by Lemma \ref{$c_2 escape$}(1). For However, Lemma \ref{five-cases}(4) shows that for $a_1>4$, $P$ has a parabolic domain corresponding to the parabolic fixed point $\delta_2$. Since the  Fatou set of $P$ contains this parabolic domain, its Julia set is is not totally disconnected whenever $a_1 >7$.
		
		\item It follows from (see Lemma~\ref{bicriticalreal-twofixedpoints-twononrealzero}(5)) that there are exactly five possible reduced independence polynomials for $4\leq a_1 \leq8$. These are $ 3z^3+9z^2+7z,  4z^3+11z^2+8z, 10z^3+17z^2+8z,  11z^3+18z^2+8z$ and $  12z^3+19z^2+8z$. From the Table \ref{Table (v)}, we have  $P (c_2)\geq\delta_1$ for $(a_1, a_2, a_3)=(7,9,3)$ or $(8, 11,4)$ and the Julia set of $P$ is connected in these cases by Lemma \ref{$c_2 escape$}(1). The Julia set is disconnected otherwise.  Lemma \ref{$c_2 escape$}(2) also shows that the Julia set of $P$ is disconnected for $a_1\geq 9$.
		\renewcommand{\arraystretch}{1.4}    		
		\begin{longtable}{|m{2.9cm}|m{0.65cm}|m{1.5cm}|m{1.4cm}|m{2.0cm}|} 
			\hline
			\centering{\textbf{$P(z)$}} &\centering \textbf{$\delta_1$} &\centering{ \textbf{$c_2$}} & \centering{\textbf{$P(c_2)$}} &  Remark \\ 
			\hline
			$3z^3+9z^2+7z$& $-2$ & $\frac{-3+\sqrt{2}}{3}$ & $-1.629$ & $P(c_2)>\delta_1$  \\
			\hline
			$4z^3+11z^2+8z$ & $-\frac{7}{4}$ & $-\frac{1}{2}$ & $-1.75$ & $P(c_2)=\delta_1$  \\
			\hline
			$10z^3+17z^2+8z$ & $-1$ & $-\frac{1}{3}$ & $-1.148$ & $P(c_2)<\delta_1$  \\
			\hline
			$11z^3+18z^2+8z$ & $-1$ & $\frac{-18+\sqrt{60}}{33}$ & $-1.078$ & $P(c_2)<\delta_1$  \\
			\hline
			$12z^3+19z^2+8z$ & $-1$ & $\frac{-19+\sqrt{73}}{36}$ & $-1.015$ & $P(c_2)<\delta_1$  \\
			\hline
			\caption{The value of the smaller fixed point $\delta_1,$ the larger critical point $c_2$ and the corresponding critical value, rounded off to three decimal places of $P(z)=a_1 z+a_2 z^2+a_3 z^3$ when $ \frac{4a_3(a_1 -2)^2}{a_1 -3} < a_2^2< 4a_1 a_3$.}
			\label{Table (v)}
		\end{longtable}
		\end{enumerate}
\end{proof}
   \begin{center}
	\begin{figure}[h!]
		
		\begin{subfigure}{.5\textwidth}
			\centering
			\includegraphics[width=1\linewidth]{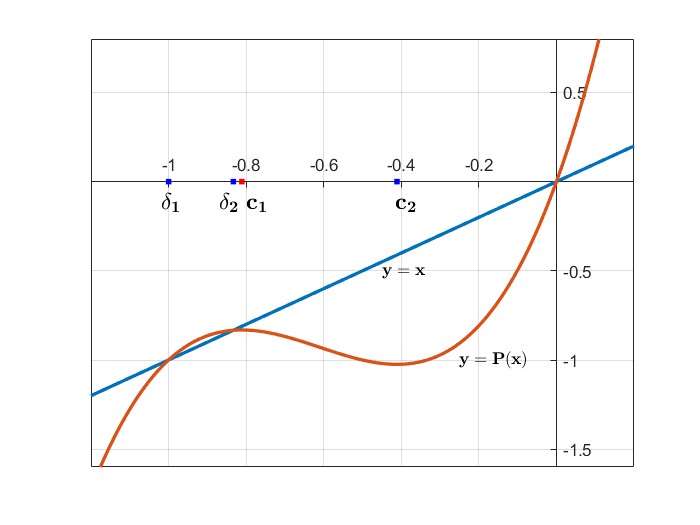}
			\caption{Graph of $P(x)=6x^3+11x^2+6x$.}
		 \label{bicriticallyreal-tworealfixedpoints-case1}
		\end{subfigure}%
		\begin{subfigure}{.5\textwidth}
			\centering
			\includegraphics[width=1\linewidth]{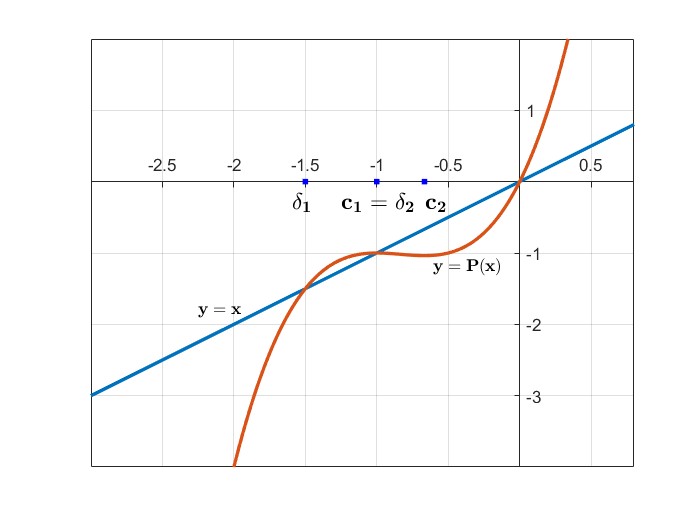}
			\caption{Graph of $P(x)=2x^3+5x^2+4x$.}
			\label{bicriticallyreal-tworealfixedpoints-case2}
		\end{subfigure}\\[1ex]
		\centering
		\begin{subfigure}{0.5\textwidth}
			\centering
			\includegraphics[width=1\linewidth]{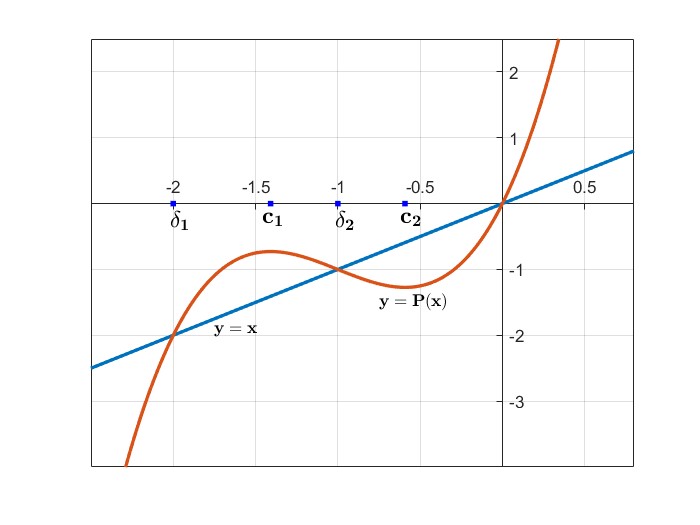}
			\caption{Graph of $P(x)=2x^3+6x^2+5x$.}
			\label{bicriticallyreal-tworealfixedpoints-case3}
		\end{subfigure}
		\caption{(a)   $P(c_2)< \delta_1$ when $4a_3(a_1-1)<a_2^2<\frac{a_3(2a_1-3)^2}{a_1-2}$(see the first entry of Table~\ref{Table1}), (b) $P(c_2)> \delta_1$ when $a_2^2=\frac{a_3(2a_1-3)^2}{a_1-2}$  and (c) $P(c_2)> \delta_1$ when $a_2^2=\frac{4 a_3( a_1-2)^2}{a_1-3}$. }
	\end{figure}
\end{center}
Now the proof of  Theorem~\ref{bicritically real-two real fixed points}(1) is presented.
  \begin{proof}[Proof of Theorem~\ref{bicritically real-two real fixed points}(1)]
  Using Theorem~\ref{IA-JS}, the proofs of $(a), (c), (d)$ and $ (e)$ follow from Lemma~\ref{fivecases-dynamics}(1), (3),(4) and (5) respectively.
  \par 
  To prove $(b)$, observe that for $a_1 \leq 6$, the possible values of $(a_1, a_2, a_3)$ are $(4,5,2), (5,7,3), (6,9,4)$ (see Lemma~\ref{bicriticalreal-twofixedpoints-twononrealzero}(2)), and  the Julia set of $P$ is connected (see Lemma~\ref{fivecases-dynamics}(2)). By Theorem~\ref{IA-JS}, $\mathcal{A}(G)$  is the disjoint union of $\mathcal{J}(P)$ and $\bigcup_{k\geq 1}~\mbox{Roots}~(I_{G^{k}}) $ in these cases and therefore  $\mathcal{A}(G)$ is disconnected. For $a_1 >6$, the Julia set of $P$ is disconnected by Lemma~\ref{fivecases-dynamics}(2) and $\mathcal{A}(G)$ is disconnected by Theorem~\ref{IA-JS}.  
  By Lemma~\ref{five-cases}, the bigger fixed point $\delta_2$ is super-attracting whenever $a_1 >4$. For $a_1 =4$, we have $(a_2, a_3)=(5,2)$ and the reduced independence polynomial is $4z+ 5z^2 +2z^3$. For this the bigger fixed point $-1$ is clearly super-attracting. Since there are at least two components of the Fatou set, namely the basins of $\infty$ and $\delta_2$, the Julia set is not totally disconnected. This completes the proof of $(b)$.
     
\end{proof}
   \begin{Remark} Note that
   	$(a_1, a_2,a_3)=(9,13,5)$ satisfies $\frac{4a_3(a_1-2)^2}{a_1-3}<a_2^2<4 a_1 a_3$, and by Lemma~\ref{bicriticalreal-tworealfixedpoint-twononrealzero-lemma}(2), the   larger critical point escapes. It can be seen that the smaller critical point is $c_1 = 1.255$ approximately and $P^2 (c_1) $ is less than the smaller fixed point. By Observation~\ref{obs 3.1}(3), $c_1$ also escapes. It now follows from Lemma~\ref{connected-Juliaset}(2) that the Julia set of $9z+ 13z^2 +5 z^3$ is totally disconnected. The independence attractor is totally disconnected as it is the same as this Julia set in this case. 
   	\label{totallydisconnected-example-9-13-5} 
   \end{Remark}
\noindent \underline{\textbf{Case II - $a_2^2=4a_1a_3$ - Only one  non-zero zero of $P$ }}:\\
We need a lemma to prove Theorem~\ref{bicritically real-two real fixed points}(2).
  \begin{lemma}
   	If $a_2^2=4a_1a_3$ then the Julia set of $P(z)=a_1z+a_2z^2+a_3z^3$ is connected for $a_1\leq9$ and totally disconnected for $a_1>9$. Moreover, the Julia set of $P$ is a line segment for $a_1=9$. 
   	\label{bicriticalreal-tworealfixedpoint-onezero-lemma}  
    \end{lemma}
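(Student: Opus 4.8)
The plan is to reduce the statement to the forward orbits of the two real critical points $c_1<c_2$ of $P$ (Equation~\ref{two-criticalpoints}) and then apply Lemma~\ref{connected-Juliaset}. The hypothesis $a_2^2=4a_1a_3$ makes the two non-zero roots of $P$ coincide, so $P(z)=a_3 z\bigl(z+\tfrac{a_2}{2a_3}\bigr)^2$; since here $\sqrt{a_2^2-3a_1a_3}=\sqrt{a_1a_3}$ and $a_2=2\sqrt{a_1a_3}$, the double root $-\frac{a_2}{2a_3}$ is exactly the smaller critical point $c_1$, so $P(c_1)=0$. As $0$ is the repelling fixed point of $P$ ($P'(0)=a_1>1$), the forward orbit of $c_1$ is the bounded set $\{0\}$ for every admissible $P$, and hence by Lemma~\ref{connected-Juliaset}(1) the connectedness of $\mathcal{J}(P)$ hinges only on whether the other critical point $c_2=-\frac{a_2}{6a_3}$ escapes. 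Using Equation~\ref{criticalvalue-bicritical-real} I would compute $P(c_2)=-\frac{4a_1}{27}\sqrt{a_1/a_3}$, and from Equation~\ref{two-fixedpoints} the smaller fixed point $\delta_1=-\sqrt{a_1/a_3}-\frac{1}{\sqrt{a_3}}$, and then compare the two; recall also $\delta_1<c_1<c_2<0$ by Equation~\ref{critical-fixed-points}.

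First, for $a_1\le 9$: clearing denominators, $P(c_2)\ge\delta_1$ becomes $\sqrt{a_1}\,(4a_1-27)\le 27$, and since $t\mapsto\sqrt t\,(4t-27)$ is strictly increasing on $[3,\infty)$ with value $27$ at $t=9$, this holds exactly for $a_1\le 9$. When $P(c_2)\ge\delta_1$, the shape of $P$ on $[\delta_1,0]$ given by Observation~\ref{obs 3.1}(1) — rising from $P(\delta_1)=\delta_1$ to the local maximum $P(c_1)=0$, descending to the local minimum $P(c_2)\ (\ge\delta_1)$, then rising to $P(0)=0$ — shows $P([\delta_1,0])\subseteq[\delta_1,0]$; since $\delta_1<c_1<c_2<0$ both critical orbits stay in this compact interval, so $\mathcal{J}(P)$ is connected by Lemma~\ref{connected-Juliaset}(1). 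When in addition $a_1=9$, the relation $a_2^2=36a_3$ forces $a_3=k^2$, $a_2=6k$ for a positive integer $k$, so $P(z)=9z+6kz^2+k^2z^3=z(3+kz)^2$; a short computation shows the affine map $\phi(x)=\frac{2}{k}(x-1)$ satisfies $(\phi^{-1}\circ P\circ\phi)(x)=4x^3-3x$, the degree-$3$ Chebyshev polynomial, whose Julia set is $[-1,1]$, and therefore $\mathcal{J}(P)=\phi([-1,1])=[-\tfrac{4}{k},0]$ is a line segment (in agreement with Theorem~2.2 of \cite{line}).

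For $a_1>9$: now $P(c_2)<\delta_1$, so by Observation~\ref{obs 3.1}(3) the sequence $\{P^{n}(c_2)\}_{n>0}$ is eventually strictly decreasing to $-\infty$; thus $c_2$ escapes and $\mathcal{J}(P)$ is disconnected by Lemma~\ref{connected-Juliaset}(1). The hard part is to strengthen this to total disconnectedness, since the other critical point $c_1$ does \emph{not} escape (it maps to $0$ in one step), so Lemma~\ref{connected-Juliaset}(2) is unavailable. My plan is, first, to observe that the Fatou set of $P$ is precisely the basin of $\infty$: the three real fixed points $0,\delta_1,\delta_2$ are all repelling (one checks $P'(0)=a_1>1$, $P'(\delta_1)=3+2\sqrt{a_1}>1$, $P'(\delta_2)=3-2\sqrt{a_1}<-1$), and $P$ can have no attracting or parabolic cycle and no Siegel disk or Herman ring, because each such object would have to capture, respectively be accumulated on its boundary by, a critical orbit, whereas the orbit of $c_2$ tends to $\infty$ and the orbit of $c_1$ is the finite set $\{c_1,0\}$; hence $\mathcal{J}(P)=K(P)$. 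Second — where I expect the real difficulty — since $P$ is a cubic polynomial with disconnected filled Julia set, exactly one escaping critical point, and the remaining critical point $c_1$ strictly preperiodic onto the repelling fixed point $0$ (in particular non-recurrent, hence admitting no quadratic-like renormalization at $c_1$), the Branner--Hubbard theory of cubic polynomials with one escaping critical point gives that the connected component of $K(P)$ containing $c_1$ reduces to $\{c_1\}$, so that $K(P)=\mathcal{J}(P)$ is totally disconnected. The main obstacle is precisely this last invocation: a fully self-contained argument would require the Branner--Hubbard puzzle-and-annulus estimates, or an ad hoc verification that no quadratic-like renormalization can be set up around $c_1$ for these particular polynomials.
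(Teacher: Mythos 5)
Your treatment of $a_1\le 9$ is correct and essentially the paper's: the same comparison $P(c_2)-\delta_1=\frac{27-(4a_1-27)\sqrt{a_1}}{27\sqrt{a_3}}$, the same invariance of $[\delta_1,0]$, and Lemma~\ref{connected-Juliaset}(1). Your explicit conjugation of $9z+6kz^2+k^2z^3$ to the Chebyshev polynomial $4x^3-3x$ is a nice, and correct, alternative to the paper's direct argument that $[\delta_1,0]$ is completely invariant and coincides with $\mathcal{J}(P)$.

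The genuine gap is in the case $a_1>9$, precisely where you flag difficulty. Your step (a) (Fatou set $=$ basin of $\infty$, so $K(P)=\mathcal{J}(P)$) is fine, but step (b) does not go through as stated: the inference ``$c_1$ is strictly preperiodic onto a repelling fixed point, hence non-recurrent, hence no quadratic-like renormalization'' is false. Non-recurrence does not preclude a quadratic-like restriction with connected filled Julia set: Misiurewicz-type quadratic-like maps have a strictly preperiodic (so non-recurrent) critical point and a connected, dendrite-like filled Julia set. In the Branner--Hubbard dichotomy the critical component of $K(P)$ is a point if and only if the critical \emph{end} (component) is aperiodic; here $c_1\mapsto 0$ and the component of $0$ is fixed, so you must rule out that the component containing $c_1$ coincides with the fixed component of $0$ and carries a degree-two return map of Misiurewicz type --- and nothing in your argument does this. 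The paper avoids all of this with an elementary, self-contained argument: since every preimage of a point of $[\delta_1,0]$ lies in $[\delta_1,0]$ (Observation~\ref{obs 3.1}(1)) and $0$ is a repelling fixed point, $\mathcal{J}(P)=\overline{\bigcup_n P^{-n}(0)}\subseteq[\delta_1,0]\subset\mathbb{R}$; hence any non-degenerate component of $\mathcal{J}(P)$ would contain a real interval $(w-\epsilon,w+\epsilon)$. But the backward orbit of the non-exceptional point $c_2$ lies in the Fatou set and is confined to $[\delta_1,0]$, so it stays at distance at least $\epsilon$ from $w$, contradicting the density of backward orbits in the Julia set (Theorem 4.2.7 of \cite{Beardon_book}). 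Replacing your renormalization step with this real-line confinement argument closes the gap without any appeal to Branner--Hubbard theory.
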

    \begin{proof}
   	The non-zero zero of $P$ is $-\frac{a_2}{2a_3}$, and this is a critical point, which must be $c_1$.  Therefore $c_1$ lies in the Julia set of $P$ as $0 \in \mathcal{J}(P)$ and $\mathcal{J}(P)$ is backward invariant.  
   	
   	Note that $c_1 <\delta_2$, otherwise, i.e.,  if $c_1 \geq \delta_2$ then by Observation \ref{obs 3.1}(2) $c_1 \geq P(c_1)$, i.e., $c_1 \geq 0$, which is not true.  Therefore $\delta_1 <c_1 <\delta_2<c_2$ (see Equation~\ref{critical-fixed-points}). Using $a_2 ^2=4a_1 a_3$, we get $c_2=-\frac{\sqrt{a_1}}{3 \sqrt{a_3}}$ and $\delta_1=-\frac{(1+\sqrt{a_1})}{\sqrt{a_3}}$, giving that $P(c_2)-\delta_1=\frac{27-(4a_1-27)\sqrt{a_1}}{27\sqrt{a_3}}$. 
    	Therefore,  for $a_1<9$, $P(c_2)> \delta_1$; for $a_1=9$, $P(c_2)=\delta_1$, and  $P(c_2)<\delta_1$ 
 for $a_1>9$.  
   \par Let $a_1 <9$. As $a_1 \geq 4$ in this case,  we have $P(c_2)> \delta_1$ for $4\leq a_1<9$.  It follows from Observation \ref{obs 3.1}(1) that  $P([\delta_1,0]) =P([\delta_1, c_1]) \cup P([c_1, 0])=[\delta_1, 0] \cup [P(c_2), 0]$, which is nothing but $[\delta_1, 0]$. Thus $P^{n}([\delta_1,0])=[\delta_1,0]$ for all $n \in \mathbb{N}$. Figure~\ref{a_1<9} demonstrates this situation for $a_1 =7$. This implies that $c_2$ is in the filled-in Julia set of $P$. By Lemma~\ref{connected-Juliaset}(1), the Julia set of $P$ is connected.
	\begin{center}
	\begin{figure}[h!]
		\begin{center}
			\begin{subfigure}{.52\textwidth}
				\includegraphics[width=1.05\linewidth]{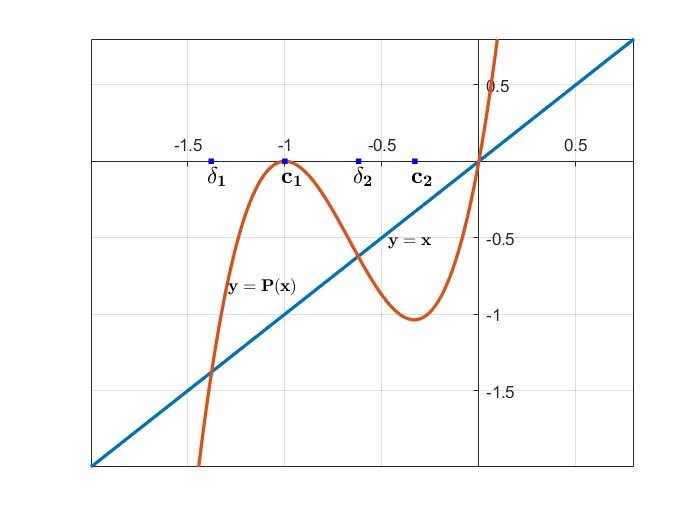}
				\caption{Graph of $P(x)=7x^3+14x^2+7x$}
				 \label{a_1<9}
			\end{subfigure}%
			\begin{subfigure}{.52\textwidth}
				\centering
				\includegraphics[width=1.05\linewidth]{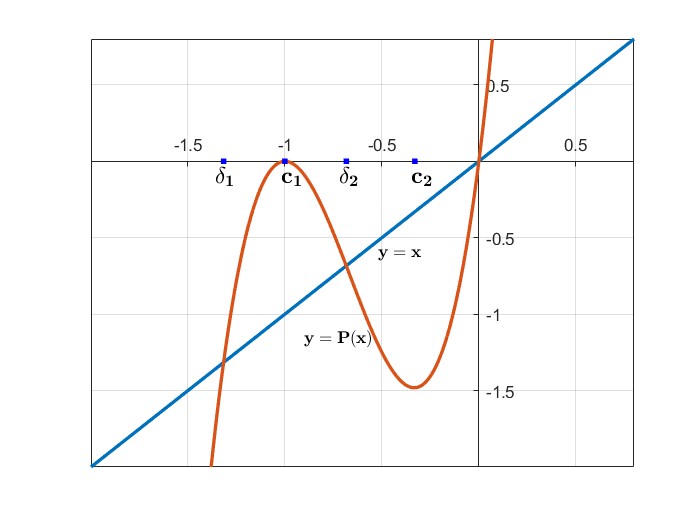}
				\caption{Graph of $P(x)=10x^3+20x^2+10x$}
					 \label{a_1>9}
			\end{subfigure}
		\end{center}
		\caption{For $a_2 ^2=4a_1 a_3$ (a) $P(c_2)> \delta_1$ and (b)  $P(c_2)< \delta_1$.}	
	\end{figure}	
	
\end{center}
    \par For $a_1=9$, $P(c_2)=\delta_1$, and it follows  from Observation \ref{obs 3.1}(1) that $P([\delta_1, 0])=[\delta_1,0]$. Every point in $[\delta_1, 0] \setminus \{P(c_1), P(c_2)\}$ has exactly three pre-images in $[\delta_1, 0]$. There are also three pre-images of $P(c_1)$ and $ P(c_2)$ counting multiplicity in $[\delta_1, 0]$. Since the degree of $P$ is $3$,  $P^{-1}([\delta_1,0])=[\delta_1,0]$. Therefore $[\delta_1,0]$ is completely invariant under $P$, and by Theorem 4.2.2 of ~\cite{Beardon_book}, the Julia set of $P$ is contained in $[\delta_1,0]$. Since $P(c_1)=0$, $P(c_2)=\delta_1$ and $0,\delta_1$ are repelling fixed points of $P$,  the Fatou set of $P$ has only one Fatou component, namely the basin of $\infty$. As $P^{n}([\delta_1,0])=[\delta_1,0]$ for all $ n $, no point of $[\delta_1,0]$ is in the basin of $\infty$. Therefore $[\delta_1,0]$ is contained in the Julia set of $P$. Hence the Julia set of $P$ is $[\delta_1,0]$.
    \par
 For $a_1>9$, $P(c_2) <\delta_1$, and it follows that $c_2$ is in the basin of $\infty$ (see Observation \ref{obs 3.1}(3)). By Observation \ref{obs 3.1}(1), every inverse image  of each point of   $[\delta_1,0]$ is in $[\delta_1,0]$ (see Figure~\ref{a_1>9} for $a_1 =10, a_3=10$). Therefore $P^{-1}([\delta_1,0])=[\delta_1,0]$. The point at $0$ is a repelling fixed point and is in the Julia set of $P$.  By Theorem 4.2.7, \cite{Beardon_book}, $\mathcal{J}(P)=\overline{\cup_{n=0}^{\infty}P^{-(n)}(0)}\subseteq [\delta_1,0]$. We assert that $\mathcal{J}(P)$ is totally disconnected. If not, then there is an open interval $(w-\epsilon,w+\epsilon) \subset \mathcal{J}(P)$ for $w \in \mathcal{J}(P)$ and some positive $\epsilon$. Since $P^{(-n)}(c_2)$ is in the Fatou set of $P$, we have  $P^{(-n)}(c_2) \notin (w-\epsilon,w+\epsilon)$ for any  $n\geq0 $. Using the backward invariance of $[\delta_1, 0]$ under $P$ and the fact that $c_2 \in [\delta_1, 0]$, we get  $w \notin \overline{\cup_{n=0}^{\infty}P^{-(n)}(c_2)}$. Here $c_2$ is a non-exceptional point whose backward orbit does not accumulate at $w$, a point in the Julia set. This is a contradiction to Theorem 4.2.7, \cite{Beardon_book}.
   Therefore $\mathcal{J}(P)$ is totally disconnected.
    \end{proof}   
    \begin{Remark}
    	It may be interesting to determine the Fatou set of $a_1 z+a_2 z^2 +a_3 z^3$ when $a_2 ^2 =4a_1 a_3$ and $a_1 \in \{4,5,6,7,8\}$.
    \end{Remark}
   \begin{proof}[Proof of Theorem~\ref{bicritically real-two real fixed points}(2)]
   	The proof follows from Theorem~\ref{IA-JS} and Lemma~\ref{bicriticalreal-tworealfixedpoint-onezero-lemma}.
   \end{proof}
  \noindent \underline{\textbf{Case III - $a_2^2 >4a_1a_3$ - Two distinct real zeros of P}}:\\
   \begin{proof}[Proof of Theorem~\ref{bicritically real-two real fixed points}(3)]
   Since  $a_2^2>4a_1a_3$, there are two distinct real zeros of $P$ and those  are  $\zeta_1=\frac{-a_2 - \sqrt{a_2 ^2 -4 a_1 a_3}}{2a_3}$ and $\zeta_2=\frac{-a_2 + \sqrt{a_2 ^2 -4 a_1 a_3}}{2a_3}$. By Rolle's theorem, there is a critical point of $P$ in $(\zeta_1, \zeta_2)$, which must be $c_1$ by Observation~\ref{obs 3.1}(2) (see Figure~\ref{CaseIII-fig} for $(a_1,a_2,a_3)=(5,5,1)$). Since $P$ is increasing in $(\zeta_1, c_1)$, we have $P(c_1)>0$. Thus, $c_1$ is in the basin of $\infty$ by Lemma~\ref{Juliaset-prop}(2). Therefore, by Lemma~\ref{connected-Juliaset}(1) the Julia set of $P$ is disconnected. We are done by Theorem~\ref{IA-JS}.
  \end{proof}
   \begin{center}
  	\begin{figure}[h!]
  		\centering
  		\includegraphics[width=0.6\linewidth]{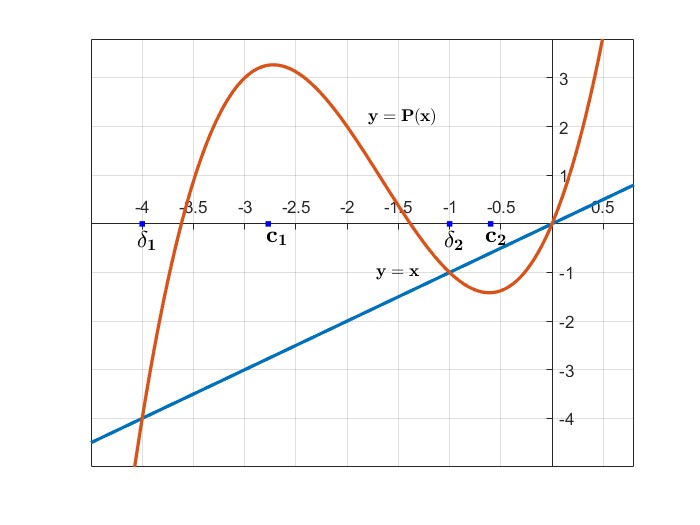}
  		\caption{Graph of $P(x)=x^3+5x^2+5x$, represents the situation $a_2 ^2 > 4 a_1 a_3$.}
  		\label{CaseIII-fig}
  	\end{figure}
  \end{center}
\begin{Remark}
In the proof of Theorem~\ref{bicritically real-two real fixed points}(3), the Julia set is totally disconnected  whenever $P(c_2) < \delta_1$. It is yet to be known whether it is always so.
\end{Remark}
\begin{Remark}
	Note that $(a_1, a_2, a_3)=(7,6,1)$ satisfies $a_2 ^2 > 4 a_1 a_3$. Here the critical value corresponding to the smaller critical point is positive.  The larger critical point $c_2 =-0.7$ approximately and $P^2(c_2)>0$. Therefore both the critical points escape  as all positive real numbers escape (by Lemma~\ref{Juliaset-prop}(2)). It now follows from Lemma~\ref{connected-Juliaset} (2) that the Julia set of $7z+6z^2+z^3$ is totally disconnected. Since in this case the independence attractor is the same as this Julia set, the independence attractor is totally disconnected.
\label{totallydisconnected-example-7-6-1}
\end{Remark}
We now present the proof of Theorem~\ref{examples}.
\begin{proof}[Proof of Theorem~\ref{examples}]
	We are going to provide graphs with a single $3$-independent set.  For every graph $G$, all the repelling and parabolic fixed points of $P_G$ are in $\mathcal{A}(G)$. In particular,  $0 \in \mathcal{A}(G)$.
	
	The proof of each case of this theorem involves constructing  a sequence  $G(n)$ of graphs for which $P_{G(n)}$ has a non-zero repelling or parabolic fixed point  $z_n$ such that $\lim\limits_{n \to \infty}z_n =\infty$. This gives that the diameter of $\mathcal{A}(G(n)) \geq |z_n|$ goes to $\infty$ as $n \to \infty$. The graphs are obtained by removing required number of edges (but not the vertices on which these are incident) from a complete graph on suitably many vertices. 
	\par 
	Let $n>3$ and $G_1 (n)$ be the graph  obtained from the complete graph on $n$ vertices by removing only three edges of a triangle. Then $P_{G_1(n)}(z)=nz+3z^2 +z^3$, for which $z_n =-\frac{3}{2}+i \sqrt{n-\frac{13}{4}} $ is a repelling fixed point.  It is easy to verify that each $G_1 (n)$ is bicritically non-real.
	
	\par To construct $G_2 (n)$, consider the complete graph on $3 n^2$ vertices with $n>1$, say   $v_j, j =1,2,3,\cdots, 3n^2$. Denote an edge joining $v_i$  and $ v_j$ by $v_iv_j$. First remove  the three edges $v_1 v_2, v_2 v_3, v_1 v_3$.   Then remove $3n -3$ edges  $v_4 v_j$ for $j=5,6,7,\cdots 3n+1$. Then the reduced independence polynomial of the  resulting graph $G_2 (n)$ is $ 3n^2 z+ 3n z^2+z^3$. Each $G_2 (n)$ is clearly unicritical. The point $z_n = \frac{-3n +i \sqrt{3n^2 -4}}{2}$ is a repelling fixed point of $P_{G_2 (n)}$ (see Remark~\ref{unicritical-fixedpoints}). 
	\par 
In order to construct $G_3 (n), n>1$,	consider the complete graph on $ n^2 +1$ vertices, say   $v_j, j =1,2,3,\cdots,  n^2 +1$. First remove  the three edges $v_1 v_2, v_2 v_3, v_1 v_3$. Then remove the edges  $v_4v_j$ for $j=5,6,7,\cdots, 2n+1$. Thus, $2n$ edges are removed. The reduced independence polynomial of the resulting graph is $(n^2 +1) z+ 2nz^2 +z^3$ and that has a parabolic fixed point at $ -n$ . It is not difficult to check that each $G_3 (n)$ is bicritically real.
\end{proof}
 \textbf{Acknowledgement:} Moumita Manna is supported by  University Grants Commission, Govt. of India through a Junior Research Fellowship.
  

\begin{thebibliography}{00}
	
 \bibitem{line} S. Barik, T. Nayak and A. Pradhan, Graphs whose independence fractals are line segments, \emph{ Bull. Malays. Math. Sci. Soc.}  44 (2021), no.~1, 55-78.
 
  \bibitem{ivona-2020}I. Bez\'akov\'a, A. Galanis, L. Goldberg and D. \v Stefankovi\v c, Inapproximability of the independent set polynomial in the complex plane, \emph{SIAM J. Comput.} 49 (2020), no.~5, STOC18-395--STOC18-448.
\bibitem{Beardon_book}  A.F. Beardon, Iteration of Rational Functions, Grad. Texts in Math. 132, Springer-Verlag, 1991.
	

%
	  	\bibitem{Buff-2002} X. Buff, On the Bieberbach conjecture and holomorphic dynamics, \emph{Proc. Amer. Math. Soc.} 131 (2003), no.~3, 755-759. 
 
  
  \bibitem{brownetal2004} J.~I.~Brown, C.~A.~Hickman and R.~J.~Nowakowski, On the location of the roots of independence polynomials, \emph{J. Algebraic Combin.} 19 (2004), no.~3, 273-282. 
 \bibitem{brownetal2003} J.~I.~Brown, C.~A.~Hickman and R.~J.~Nowakowski, The independence fractal of a graph, \emph{J. Comb. Theory Ser. B} 87 (2003), 209--230.
      
	
 \bibitem{moumita-2025}  G. Khetawat, M. Manna, and T. Nayak, Circles and line segments as independence attractors, https://arxiv.org/pdf/2505.20898, (2025).


\bibitem{rotational-symmetry-2024} T. Nayak and S. Pal, Julia Sets of Rational Maps with Rotational Symmetries. \emph{Complex Anal. Oper. Theory} 19 (2025), no.~1, 11.	
\bibitem{hans-2019}
H. Peters and G. Regts, On a conjecture of Sokal concerning roots of the independence polynomial, \emph{ Michigan Math. J.} 68 (2019), no.~1, 33-55.

\bibitem{Pom 1975} C. Pommerenke, Univalent Functions, Vandenhoeck and Ruprecht, Gottingen, 1975. 

\bibitem{scot-sokal2005}
  A. D. Scott and A. D. Sokal, The repulsive lattice gas, the independent-set
  polynomial, and the Lovász local lemma, \emph{ J. Stat. Phys.} 118 (2005), no.~5-6, 1151-1261.
 \bibitem{leeyang1952} C. N. Yang and T. D. Lee, Statistical theory of equations of state and phase transitions.  I. Theory of condensation, \emph{Phys. Rev.} (2) 87 (1952), 404-409. 
  \bibitem{wiki-Turan}
  https://en.wikipedia.org/wiki/Tur.
  
 \end{thebibliography}
\end{document}